\documentclass[12pt]{amsart}

\pdfoutput=1

\usepackage{amsthm}
\usepackage{amsmath}
\usepackage{amstext}
\usepackage{amssymb}
\usepackage{bbm}
\usepackage{caption}
\usepackage{verbatim} 
\usepackage{setspace}
\usepackage[dvips,letterpaper,margin= 1.3 in]{geometry}

\usepackage{graphicx}
\usepackage{hyperref}
\usepackage{pdflscape}
\usepackage{subcaption}

\usepackage{float}
\usepackage[utf8]{inputenc}
\usepackage[final]{pdfpages}

\newtheorem{theorem}{Theorem}[section]

\newtheorem{lemma}[theorem]{Lemma}
\newtheorem{remark}[theorem]{Remark}
\newtheorem{proposition}[theorem]{Proposition}
\newtheorem{corollary}[theorem]{Corollary}
\newtheorem{conjecture}[theorem]{Conjecture}
\newtheorem{definition}[theorem]{Definition}

\newtheorem{question}[theorem]{Question}
\newtheorem{algorithm}[theorem]{Algorithm}
\newtheorem{observation}[theorem]{Observation}

\newcommand{\dotr}{\mbox{$\boldsymbol{\cdot}$}}

\title{Conditions for Discrete Equidecomposability of Polygons}

\begin{document}

\author{Paxton Turner}
\address{Department of Mathematics, Louisiana State University, Baton Rouge, Louisiana 70803}
\email{pturne7@tigers.lsu.edu}

\author{Yuhuai (Tony) Wu}
\address{Department of Mathematics, University of New Brunswick, Fredericton, New Brunswick E3B4N9}
\email{wu.yuhuai0206@unb.ca}

\thanks{This research was supported by an NSF grant to Brown's Summer@ICERM program.}

\begin{abstract}

Two rational polygons $P$ and $Q$ are said to be \emph{discretely equidecomposable} if there exists a piecewise affine-unimodular bijection (equivalently, a piecewise affine-linear bijection that preserves the integer lattice $\mathbb{Z} \times \mathbb{Z}$) from $P$ to $Q$. In \cite{turnerwu}, we developed an invariant for rational finite discrete equidecomposability known as \textit{weight}. Here we extend this program with a necessary and sufficient condition for rational finite discrete equidecomposability. We close with an algorithm for detecting and constructing equidecomposability relations between rational polygons $P$ and $Q$.  

\end{abstract}

\maketitle

\tableofcontents

\begin{section}{Introduction}
\label{sec:introduction}

Given polygons $P$ and $Q$, a discrete equidecomposability relation $\mathcal{F}:P \to Q$ is a certain type of piecewise-linear $\mathbb{Z} \times \mathbb{Z}$-preserving bijection \cite{haase, kantor1, greenberg, stanley}. Discrete equidecomposability was initially studied due to questions from Ehrhart theory, the enumeration of integer lattice points in dilations of polytopes \cite{ehrhart}. The map $\mathcal{F}$ preserves the Ehrhart quasi-polynomial, which partially explains why discrete equidecomposability relations are natural to consider in the context of Ehrhart theory. Furthermore, discrete equidecomposability has provided a method for shedding light on the mysterious phenomenon of \textit{period collapse}, which occurs when the Ehrhart quasi-polynomial of a rational polygon has a surprisingly simple form \cite{woods, mcallister, deloera, deloera1, derksen, kirillov}.  

We continue the study of the special case of \textit{rational finite discrete equidecomposability} of polygons begun in \cite{turnerwu}. It is our hope that this paper will provide a framework for further exploration of certain computational questions related to discrete equidecomposability.  

\begin{subsection}{Results}

The paper \cite{haase} presented several interesting questions that motivated this research. We present responses to two of these in the dimension $2$ case of rational discrete equidecomposability.

\begin{question}
\label{que:dehn}
Can we develop an invariant for discrete equidecomposability that detects precisely when two rational polytopes $P$ and $Q$ are discretely equidecomposable?\footnote{ This question appears with slightly different terminology as Question 4.5 in \cite{haase}. The authors refer to this invariant as a \emph{discrete Dehn invariant}, alluding to the classical Dehn invariant developed for the question of \emph{continuous equidecomposability}, that is, Hilbert's Third Problem (see Chapter 7 of \cite{ziegler}).}

\end{question}

In the case of finite rational equidecomposability of polygons, Section \ref{sec:dehn} extends the weight system from Section 3 of \cite{turnerwu} to develop three criteria which, if all satisfied, guarantee the existence of an equidecomposability relation. Conversely, if any of these criteria are not satisfied, there does not exist an equidecomposability relation between the polygons under consideration.

Our invariant for polygons consists of three pieces of data: morally speaking, one for each $d$-dimensional face, $0 \leq d \leq 2$, (vertices, edges, facets) of a simplicial decomposition of a polygon. The most puzzling criterion is the one corresponding to facets, which requires, roughly speaking, looking at the orbit of our polygon in a countable family of discrete dynamical systems. 

\begin{theorem}[Main Result 1]
\label{thm:main1}
We present new necessary and sufficient conditions for rational polygons $P$ and $Q$ to be rationally finitely discretely equidecomposable in terms of three pieces of data: (1) the Ehrhart quasi-polynomial, (2) a weighting system on the edges, and (3) a countable family of discrete dynamical systems generated by minimal triangulations. 
\end{theorem}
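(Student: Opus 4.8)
The plan is to prove the two directions separately: necessity, that any equidecomposability relation forces agreement of all three data, and sufficiency, that agreement of the three data lets one construct such a relation. I expect necessity to be the more routine half. An equidecomposability relation $\mathcal{F}\colon P \to Q$ is by definition a piecewise affine-unimodular, $\mathbb{Z}\times\mathbb{Z}$-preserving bijection, so it carries lattice points of each integer dilation $tP$ to lattice points of $tQ$; hence $P$ and $Q$ share the same Ehrhart quasi-polynomial, settling datum (1). For datum (2) I would simply invoke the invariance of the edge weighting system established in \cite{turnerwu}. The real content of necessity is datum (3): I would refine the decomposition realizing $\mathcal{F}$ to a common minimal (unimodular) triangulation and then record the reassembly of the triangles as a finite sequence of local moves, exhibiting $Q$ in the orbit of $P$ under the relevant dynamical system.

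For sufficiency the argument runs in reverse and is constructive. Assuming $P$ and $Q$ agree on all three data, equality of Ehrhart quasi-polynomials first aligns their areas and boundary lattice-point counts, ensuring that compatible minimal triangulations exist. The edge weighting system then constrains how boundary edges must be matched, fixing the behavior of any candidate map along the one-dimensional skeleton. Finally, the hypothesis that $Q$ lies in the orbit of $P$ supplies an explicit finite sequence of moves in the dynamical system; realizing each move as a local affine-unimodular modification supported on a bounded region and composing them yields the bijection $\mathcal{F}\colon P \to Q$. This constructive core is also what should power the detection-and-construction algorithm promised in the abstract.

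The principal obstacle is datum (3) together with its compatibility with (1) and (2). Two points require care. First, the local moves generating the dynamical system must be realizable by genuine affine-unimodular maps that respect the edge weights already fixed by datum (2); that is, the three criteria must be shown mutually compatible rather than merely individually necessary. Second, the family of dynamical systems is \emph{countable} rather than finite, so I must ensure that orbit membership is decidable and that a witnessing sequence of moves terminates. I anticipate that the crux is a finiteness or monotonicity lemma confining the search through this countable family to a bounded, computable region, precisely the statement needed both to close the sufficiency direction and to guarantee termination of the algorithm.
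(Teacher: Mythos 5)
Your skeleton (two directions, one datum per face dimension) matches the paper's, but the proposal is missing the key organizing object and misplaces the difficulty. The paper's proof factors through a \emph{facet map}: a bijection between the open facets of triangulations of $P$ and $Q$ that is a $G$-map on each facet but says nothing about edges or vertices. The dynamical-system condition (3) is shown to be equivalent to the existence of a facet map, and then conditions (1) and (2) are shown to be exactly what is needed to extend a facet map over the $1$- and $0$-skeleta. Your sufficiency sketch instead tries to get the whole bijection $\mathcal{F}$ out of the sequence of moves, with (1) only ``aligning areas and boundary lattice-point counts'' --- but that is not how the Ehrhart datum enters. The vertex step requires two nontrivial lemmas: all $d$-primitive points form a single $G$-orbit (which needs an elementary but genuine number-theoretic argument to translate an arbitrary primitive point to a visible one), and Ehrhart equivalence is equivalent, by a M\"obius-type induction over divisors, to $|P\cap\mathcal{S}_N|=|Q\cap\mathcal{S}_N|$ for all $N$. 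The edge step likewise needs a counting identity (an Euler-relation-style formula expressing the number of edges of each Weight class in terms of facet counts and boundary edges) to conclude that the two triangulations have equally many edges in each $G$-class. None of this is visible in your outline, and ``constrains how boundary edges must be matched'' does not substitute for it.

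Two further points. First, your realization of a move as ``a local affine-unimodular modification supported on a bounded region'' fails as stated: a flip exchanges a diagonal and cannot be realized by a single $G$-map on the two triangles; the paper must refine each triangle of the parallelogram into two pieces and map the four pieces separately, and it must separately prove that pseudo-flipping is well defined (the output weight classes are independent of which representative parallelogram is chosen), which takes a full case analysis. Relatedly, your necessity argument for (3) --- refining to a common triangulation and ``recording the reassembly as a sequence of local moves'' --- is both unjustified (why is an arbitrary reassembly a flip sequence?) and unnecessary: since $G$-maps preserve triangle weights, a facet map forces the weight multisets $\tau_0(P)$ and $\tau_0(Q)$ to be literally equal, so the orbits coincide by definition. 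Second, the finiteness/monotonicity lemma you identify as ``the crux'' is not needed for the theorem at all: sufficiency assumes orbit agreement for \emph{some} $d'$, so no search or termination argument arises, and the reduction of the countable family to a finite check is precisely the paper's open conjecture, not part of its proof.
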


In Section \ref{sec:questions}, we conjecture that it suffices to only check one special member of this family of discrete dynamical systems to verify that the polygon satisfies the facet criterion. We summarize this with the following conjecture. 

\begin{conjecture}
Checking condition (3) from Theorem \ref{thm:main1} can be reduced to a finite number of verifications. In particular, there exists a finite procedure for determining if rational polygons $P$ and $Q$ are rationally discretely equidecomposable.  
\end{conjecture}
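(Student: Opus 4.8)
The plan is to prove the two assertions in turn, reducing everything to the finiteness of a single reachability problem. Conditions (1) and (2) of Theorem \ref{thm:main1} are already finitely checkable: the Ehrhart quasi-polynomial of a rational polygon is computable from its vertex data, and the edge-weighting system of \cite{turnerwu} is a finite combinatorial comparison. Thus the whole conjecture rests on showing that the facet criterion, condition (3) --- a priori an infinite computation over a countable family of discrete dynamical systems $\{D_k\}_{k \geq 1}$ indexed by the minimal triangulations at successive refinement levels --- can be decided by examining a single distinguished system. Concretely, I would first isolate the candidate special member: let $N = \mathrm{lcm}$ of the denominators of the vertex coordinates of $P$ and $Q$, and take $D_\star$ to be the dynamical system generated by the minimal triangulation whose cut vertices lie on the $\frac{1}{N}$-lattice. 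The first step is to argue that any equidecomposability relation $\mathcal{F} : P \to Q$ must, after normalization, cut along points of this lattice, since piecewise affine-unimodular maps act on denominators in a controlled way; this makes $D_\star$ the natural resolution at which $P$ and $Q$ can interact.

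The heart of the argument is an absorption (confluence) lemma: the semigroup of moves generated by any $D_k$ produces the same orbit of $P$ as the semigroup generated by $D_\star$. The idea is that a flip or rearrangement performed in a finer triangulation should decompose into a sequence of moves available in $D_\star$, because the finer pieces reassemble into $\frac{1}{N}$-lattice polygons, so refining the triangulation adds no genuinely new reachable target. Granting this, condition (3) collapses to the single question of whether $Q$ lies in the $D_\star$-orbit of $P$. I would then establish finiteness of that orbit: every polygon appearing in it shares the Ehrhart quasi-polynomial of $P$ (so it has bounded area and a controlled number of lattice points), lies within a bounded region, and has all vertices on the $\frac{1}{N}$-lattice, while the weight invariant of \cite{turnerwu} is preserved throughout. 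These constraints confine the orbit to a finite set of candidate polygons up to affine-unimodular equivalence, so the reachability search under $D_\star$ terminates. Assembling the pieces yields a finite decision procedure, and hence both assertions of the conjecture.

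I expect the absorption lemma to be the main obstacle, and indeed the reason the statement is only conjectural. The discrete dynamical systems are not obviously confluent, and it is genuinely unclear whether passing to a finer triangulation can unlock an equidecomposability relation invisible at the canonical level $N$: a rearrangement carried out with small pieces might require transiting through intermediate configurations that cannot be expressed as $\frac{1}{N}$-lattice polygons, even if its endpoints can. Ruling this out --- proving that the single special system $D_\star$ is truly universal, so that no refinement increases reachability --- is the crux. A promising route is to show that the weight invariant together with the Ehrhart data already determines the $D_\star$-orbit completely, which would make condition (3) redundant given (1) and (2) at the canonical level; but turning this heuristic into a proof of confluence is precisely where the difficulty lies.
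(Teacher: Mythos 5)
This statement is a \emph{conjecture} in the paper: the authors offer no proof, and in Section \ref{sec:questions} they merely restate it as an open problem, guessing that it suffices to check $\mathcal{D}_{d}(P) = \mathcal{D}_{d}(Q)$ at the base denominator $d$ alone. Your proposal correctly identifies the same crux the authors do --- whether passing to a finer lattice $\mathcal{L}_{d'}$ can unlock a facet map invisible at the coarsest level --- and your candidate special system $D_\star$ coincides with theirs. But your writeup is a strategy outline, not a proof: the entire argument funnels through the ``absorption (confluence) lemma,'' and you explicitly concede in the final paragraph that you cannot prove it and that it is ``precisely where the difficulty lies.'' Asserting the key lemma and then conceding it is open leaves the conjecture exactly as open as the paper leaves it; there is no proof here to compare against.

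Two smaller inaccuracies are worth flagging. First, the systems $\mathcal{D}_{d'}$ do not act on polygons: their elements are \emph{pseudo-triangulations}, i.e.\ multisets of weight classes of $d'$-minimal triangles, acted on by pseudo-flips. So your finiteness argument for the orbit (``bounded area, bounded region, vertices on the $\frac{1}{N}$-lattice'') is aimed at the wrong object; the paper already observes, right after Definition \ref{def:dds}, that each $\mathcal{D}_{d'}(P)$ is finite simply because there are finitely many $d'$-weight classes and the multiset cardinality is fixed. The genuinely infinite part of condition (3) is the quantification over all $d'$ divisible by $d$, not the size of any single orbit. Second, your opening claim that ``any equidecomposability relation must cut along points of the $\frac{1}{N}$-lattice'' is false as stated: the paper's setup explicitly allows relations $\mathcal{F}_{d'}$ realized on $d'$-minimal triangulations for any $d'$ divisible by $d$, and nothing forces $d' = d$. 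That is the whole reason condition (3) is, a priori, an infinite check.
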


The next question posed by Haase--McAllister in \cite{haase}\footnote{Labeled Question 3.3 in \cite{haase}.} delves deeper into automating these verifications.  

\begin{question}
\label{que:alg}
Given that $P$ and $Q$ are equidecomposable rational polytopes, is there an algorithm to produce explicitly an equidecomposability relation $\mathcal{F}: P \to Q?$
\end{question}

In our special case of rational finite equidecomposability, the answer to the above question is ``yes'', although we do very little here to realize our algorithm computationally or analyze its time-complexity. Most likely the one we present is extremely computationally expensive. The upshot is that all of our methods in this paper are constructive. That is, if we know \textit{a priori} that $P$ and $Q$ are finitely rationally discretely equidecomposable, then there exists a procedure with a finite number of steps (albeit, potentially very large) for constructing an equidecomposability relation between $P$ and $Q$.

However, if handed two rational polygons $P$ and $Q$ and asked ``Are $P$ and $Q$ finitely rationally discretely equidecomposable?'', we do not have in this paper a finite algorithm for answering this question. This is because the ``facet-criterion'' mentioned under Question \ref{que:dehn} requires the user to check an infinite amount of data.

\begin{theorem}[Main Result 2]
There exists an algorithm for verifying that the criteria from Theorem \ref{thm:main1} are satisfied. Moreover, if (1),(2), and (3) from Theorem \ref{thm:main1} are satisfied, there exists an algorithm for explicitly constructing an equidecomposability relation between $P$ and $Q$. 
\end{theorem}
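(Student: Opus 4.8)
The plan is to handle the two assertions separately: first an effective verification of conditions (1)--(3) of Theorem \ref{thm:main1}, and then, under the promise that all three hold, an effective construction of the relation $\mathcal{F}\colon P \to Q$. Conditions (1) and (2) are straightforwardly decidable. The Ehrhart quasi-polynomial of a rational polygon is computable: if $m$ is the least common multiple of the denominators appearing in the vertices of $P$, then the quasi-polynomial has period dividing $m$ and degree $2$, so it is determined by counting the lattice points of $tP$ for finitely many $t$ and interpolating; comparing the two quasi-polynomials is then a finite comparison of their constituent polynomials. For condition (2), one fixes minimal triangulations of $P$ and $Q$; since these have finitely many edges and the weight attached to each edge is an explicitly computable rational quantity (from Section 3 of \cite{turnerwu}), equality of the two weighting systems reduces to comparing two finite multisets of rationals.

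Condition (3) is the delicate one: it is indexed by a countable family of discrete dynamical systems, but each individual system acts on an effectively describable finite set of minimal triangulations, so the orbit of $P$ (respectively $Q$) under any fixed member of the family is computable by iterating the transition map until a repeat is detected. Thus each member yields a terminating subroutine, and the verification of (3) is the procedure that runs these subroutines over the family; I would emphasize that this is the step that is not a priori finite, which is exactly the content of the conjecture stated above.

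For the construction, assume (1)--(3) hold. The idea is to read an explicit relation off the matching orbit data supplied by condition (3). First I would use the agreement of orbits under the relevant discrete dynamical system to produce a finite sequence of elementary moves --- unimodular relocations of simplices and edge-flips between minimal triangulations --- carrying a distinguished triangulation of $P$ to one of $Q$. Each such move is, by construction, a piecewise affine-unimodular bijection on the pieces it affects, so composing the moves yields a candidate piecewise affine-unimodular bijection $\mathcal{F}$. I would then invoke conditions (1) and (2) to certify that this candidate is a genuine bijection: the Ehrhart condition guarantees that corresponding pieces contain the same number of lattice points, so the simplices of the two triangulations can be matched, while the edge-weighting condition guarantees that the affine-unimodular maps agree on shared boundaries and hence glue consistently into a single $\mathbb{Z} \times \mathbb{Z}$-preserving bijection on all of $P$.

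The main obstacle is condition (3) in both halves. In the verification it is the only source of non-finiteness, since the countable family of dynamical systems need not be exhausted in bounded time. In the construction the difficulty is quantitative: I must extract from the matching orbits a \emph{finite} and explicitly bounded sequence of elementary moves, and then certify that the composition of these moves glues consistently along every shared edge rather than merely piecewise. Controlling the number of moves and verifying the global gluing --- as opposed to the local bijectivity of each move --- is where the real work lies; conditions (1) and (2) are precisely what make this gluing provably consistent, so the crux is organizing the orbit data from (3) into a combinatorially finite recipe.
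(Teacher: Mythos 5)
Your verification half is right in outline and matches the paper: conditions (1) and (2) are finite checks, and (3) is handled by running, for each $d'$, a terminating orbit computation on a finite set of pseudo-triangulations, with the non-finiteness living entirely in the quantification over $d'$. One caveat on (2): the invariant to be compared is the Weight $\mathbf{W}_d$, i.e.\ the $G$-equivalence class of each boundary minimal segment, not merely the numeric weight $W \bmod d$; two segments of equal numeric weight need not be $G$-equivalent (e.g.\ weight-$0$ segments with and without an integer lattice point), so ``comparing two finite multisets of rationals'' does not decide the condition. The paper needs the canonical-form construction of Algorithm \ref{alg:edge_detect} precisely to make this equivalence decidable.

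The construction half has a genuine conceptual gap. You use (1) and (2) to ``certify that the candidate is a genuine bijection,'' with the Ehrhart condition matching simplices by lattice-point counts and the edge condition forcing the unimodular maps to ``agree on shared boundaries and hence glue consistently.'' That is not how the relation is built, and the gluing you describe is neither required nor generally achievable: by Definition \ref{def:equi} the decomposition is into \emph{disjoint open} simplices, so each open edge and each vertex of the triangulation is its own piece carrying its own $G$-map, and nothing forces the facet maps on adjacent triangles to agree on their common edge. The paper's mechanism (Proposition \ref{prop:extend}) treats the three strata independently: condition (3) produces a facet map on the open $2$-cells via Lemma \ref{lem:dds_map}, which converts a path of pseudo-flips into a finite sequence of relocations, classical flips inside an actual parallelogram, and pull-backs of refined triangulations; condition (1), through Lemmas \ref{lem:points1} and \ref{lem:points2}, gives equal counts of $N$-primitive points for every $N$ and hence a piecewise $G$-bijection of the vertex sets; and condition (2), through Lemma \ref{lem:edge}, gives equal multisets of edge $G$-classes and hence a piecewise $G$-bijection of the open edges. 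Overwriting the facet map on the lower-dimensional strata with these two bijections is the entire construction; there is no global gluing step to verify, so the ``real work'' your proposal identifies --- certifying consistency along shared edges --- is solving a problem the definition does not pose, while the actual roles of (1) and (2) (supplying the vertex- and edge-level bijections) are left unaddressed.
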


We close with the following remark that emphasizes the scope of our results.

\begin{remark}
Our results only concern the case of rational finite discrete equidecomposability of polygons. The questions posed above are still unanswered in higher dimensions or for irrational or infinite equidecomposability relations. See Sections 4 and 5 of \cite{turnerwu} for further discussion. 
\end{remark}

\end{subsection}

\begin{subsection}{Outline}

\begin{remark}
For the rest of this paper, we abbreviate the phrase \emph{rational finite discrete equidecomposability} with simply \emph{equidecomposability}, as all of our results concern this situation. Furthermore, we emphasize that we are only working with polytopes in dimension $2$ (polygons).
\end{remark}

\begin{itemize}

\item In Section \ref{sec:prelim}, we review the notation, definitions, and results from various sources that are needed for our work here.

\item In Section \ref{sec:dehn}, we present $3$ conditions required for two rational polygons $P$ and $Q$ to be equidecomposable. The first of these conditions is that the Ehrhart quasi-polynomials of $P$ and $Q$ agree. The second condition requires that a generalized version of the weights presented in Section 3 of \cite{turnerwu} that are assigned to $P$ and $Q$ must agree. The final condition is the most involved. For each denominator $d$, we construct a dynamical system $\mathcal{D}_d$ generated by the $d$-minimal triangles as defined in Section \ref{sec:prelim}. The third condition requires, roughly speaking, that the orbits of $P$ and $Q$ agree in this dynamical system for each $d$ a multiple of the denominators of $P$ and $Q$. 

\item Section \ref{sec:algorithm} realizes the criteria presented in Section \ref{sec:dehn} concretely. This highlights the extent to which our methods are constructive and provides foundations for further computational investigation. 

\item Section \ref{sec:questions} closes with questions for further research.
  
\end{itemize}

\end{subsection}

\end{section}

\begin{section}{Preliminaries}
\label{sec:prelim}

Here we recapitulate the notation, definitions, and results needed for this paper. 

\begin{definition}[Ehrhart function of a subset of $\mathbb{R}^n$]

Let $S$ be a bounded subset of $\mathbb{R}^n$. Then the \emph{Ehrhart function} of $S$ is defined to be

\begin{equation*}
\mathrm{ehr}_S(t) := |tS \cap \mathbb{Z}^n|
\end{equation*}

where $tS = \{ts \, | \, s \in S \}$ is the $t$'th dilate of $S$ \cite{ehrhart}. 

\end{definition}

A polytope $P$ is said to have denominator $d$ if $d$ is the least positive integer such that $dP$ has integer vertices (\textit{i.e.} $dP$ is an integral polytope). The following is Ehrhart's fundamental result \cite{ehrhart, beck}.

\begin{theorem}[Ehrhart]
Let $P$ be a rational polytope of denominator $d$. Then $\mathrm{ehr}_P(t)$ is a quasi-polynomial of period $d$. That is, 

\begin{displaymath}
   \mathrm{ehr}_P(t) = \left\{
     \begin{array}{lr}
       p_1(t) & : t \equiv 1 \mod d \\
       p_2(t) & : t \equiv 2 \mod d \\
       \hspace{1.3 cm} \vdots \\
       p_j(t) & : t \equiv j \mod d \\
       \hspace{1.3 cm} \vdots \\
       p_d(t) & : t \equiv d \mod d \\
     \end{array}
   \right.
\end{displaymath} 

where the $p_j$ are polynomials with degree equal to the dimension of $P$. 

\end{theorem}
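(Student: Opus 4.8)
The plan is to prove this via the standard Ehrhart series method: pass to the generating function, reduce to a single rational simplex, and then read off quasi-polynomiality from a rational generating function. We may assume $P \subseteq \mathbb{R}^n$ is full-dimensional with $n = \dim P$ (otherwise we restrict to the affine span of $P$ with its induced lattice). First I would form the \emph{Ehrhart series} $\mathrm{Ehr}_P(z) := \sum_{t \geq 0} \mathrm{ehr}_P(t)\, z^t$ and identify it with the integer-point transform of the cone over $P$. Concretely, set $\mathrm{cone}(P) = \{(tx, t) : x \in P,\ t \geq 0\} \subseteq \mathbb{R}^{n+1}$; a lattice point of $\mathrm{cone}(P)$ with last coordinate $t$ is exactly a point of $tP \cap \mathbb{Z}^n$, so
\begin{equation*}
\mathrm{Ehr}_P(z) = \sum_{(m,t) \in \mathrm{cone}(P) \cap \mathbb{Z}^{n+1}} z^{t}.
\end{equation*}
The problem is thereby transferred to understanding the lattice-point generating function of a rational cone.

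Next I would triangulate $P$ into rational simplices using only the vertices of $P$. Using a half-open decomposition (or inclusion–exclusion on shared faces) to avoid overcounting, $\mathrm{Ehr}_P(z)$ becomes a finite $\mathbb{Z}$-linear combination of the corresponding series for simplices and their faces; since a finite sum of quasi-polynomials whose periods divide $d$ is again such a quasi-polynomial, it suffices to treat a single rational simplex $\Delta$ with vertices $v_0, \ldots, v_n$. For $\Delta$ the cone $\mathrm{cone}(\Delta)$ is simplicial, generated by the primitive lattice vectors $w_i = (d_i v_i, d_i)$, where $d_i \mid d$ denotes the denominator of $v_i$. Every lattice point of the cone is uniquely $p + \sum_i \lambda_i w_i$ with $\lambda_i \in \mathbb{Z}_{\geq 0}$ and $p$ in the half-open fundamental parallelepiped $\Pi = \{\sum_i \mu_i w_i : 0 \leq \mu_i < 1\}$, which yields the closed form
\begin{equation*}
\mathrm{Ehr}_\Delta(z) = \frac{\sum_{p \in \Pi \cap \mathbb{Z}^{n+1}} z^{\,\mathrm{ht}(p)}}{\prod_{i=0}^{n}\bigl(1 - z^{d_i}\bigr)},
\end{equation*}
a rational function whose numerator is a genuine polynomial, since the compact set $\Pi$ contains only finitely many lattice points.

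Finally I would extract the quasi-polynomial by partial fractions. The denominator $\prod_i(1 - z^{d_i})$ has all poles at roots of unity of orders dividing $\mathrm{lcm}(d_i) \mid d$; a pole of order $k$ at a primitive $m$-th root of unity contributes a term whose coefficient sequence is a polynomial in $t$ of degree $k-1$ times a function periodic of period $m$. Collecting contributions shows that $\mathrm{ehr}_P(t)$ agrees with a quasi-polynomial of period dividing $d$ for all $t \geq 0$, which is exactly the $d$-piece representation in the statement. The pole at $z = 1$ has order precisely $n+1$, since each factor $1 - z^{d_i}$ vanishes there to first order, so the polynomial pieces have degree $n = \dim P$; to confirm the degree is exactly $n$ and not smaller, I would match the leading coefficient of each piece with the Euclidean volume $\mathrm{vol}(P)$, which is strictly positive. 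The main obstacle I anticipate is the bookkeeping in the triangulation step: ensuring the half-open decomposition faithfully reduces the count over $P$ to the simplicial pieces without boundary overcount, and that both the common period and the top degree survive this reduction. Once that is in place, the passage from the rational function to period-$d$, degree-$n$ behavior is routine partial-fraction analysis.
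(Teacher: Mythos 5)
The paper does not prove this statement; it quotes it as Ehrhart's classical theorem with citations to \cite{ehrhart, beck}, so there is no internal proof to compare against. Your proposal is the standard Ehrhart-series argument (coning over $P$, triangulating into rational simplices, tiling each simplicial cone by the lattice points of its half-open fundamental parallelepiped to get a rational generating function, and reading off the period from the root-of-unity poles and the degree from the order-$(n+1)$ pole at $z=1$ with positive leading term $\mathrm{vol}(P)$); this is essentially the proof in the cited reference of Beck--Robins and is sound, including the two points you correctly flag as requiring care.
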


We restrict our attention hereafter to (not necessarily convex) polygons in $\mathbb{R}^2$, as all of our results concern this case. Note that discrete equidecomposability has been studied in higher dimensions \cite{haase, stanley, kantor1}, but our results do not apply to that situation. 

The \emph{affine unimodular group} $G := GL_2(\mathbb{Z}) \rtimes \mathbb{Z}^2$ has the following action on $\mathbb{R}^2$ that preserves the integer lattice $\mathbb{Z} \times \mathbb{Z}$ (and, hence, Ehrhart functions as well). \\

\begin{center}
$x \mapsto gx := Ux + v$ 

$x \in \mathbb{R}^2, \, g = U \rtimes v \in G = GL_2(\mathbb{Z}) \rtimes \mathbb{Z}^2$. 
\end{center}

Note that $GL_2(\mathbb{Z})$ is precisely the set of integer $2 \times 2$ matrices with determinant $\pm 1$. Two regions $R_1$ and $R_2$ (usually polygons for our purposes) are said to be $G$\emph{-equivalent} if they lie in the same $G$-orbit, that is, $G R_1 = G R_2$. Also, a $G$\emph{-map} is a transformation on $\mathbb{R}^2$ induced by an element $g \in G$. We slightly abuse notation and refer to both the element and the map induced by the element as $g$.

In the same manner of \cite{haase}, we define the notion of discrete equidecomposability in $\mathbb{R}^2$.\footnote{The source \cite{haase} defines this notion in $\mathbb{R}^n$ and labels it $GL_n(\mathbb{Z})$\emph{-equidecomposability}. See definition 3.1.}

\begin{definition}[discrete equidecomposability]
\label{def:equi}

Let $P, Q \subset \mathbb{R}^2$. Then $P$ and $Q$ are \emph{discretely equidecomposable} if there exist open simplices $T_1, \ldots, T_r$ and $G$-maps $g_1, \ldots, g_r$ such that

\begin{equation*}
P = \bigsqcup _{i= 1} ^r T_i \, \, \mathrm{and} \, \, Q = \bigsqcup _{i = 1} ^r g_i(T_i).
\end{equation*}

Here, $\bigsqcup$ indicates disjoint union.

\end{definition}

For our purposes, $P$ ($Q$, respectively) is a polygon so we refer to the collection of open simplices $\{ T_1, \ldots, T_r \}$ ($\{ g_1(T_1), \ldots, g_r(T_r) \}$, respectively) as a \textit{simplicial decomposition} or \textit{triangulation}. 

If $P$ and $Q$ are discretely equidecomposable, then there exists a map $\mathcal{F}$ which we call the \emph{equidecomposability relation} that restricts to the specified $G$-map on each open piece of $P$. Precisely, that is, $\mathcal{F}|_{T_i} = g_i$. The map $\mathcal{F}$ is thus a piecewise $G$-bijection. Observe from the definition that the map $\mathcal{F}$ preserves the Ehrhart quasi-polynomial; hence $P$ and $Q$ are Ehrhart-equivalent if they are discretely equidecomposable.

The remainder of this section summarizes the discussion and some results from Section 2 of \cite{turnerwu}. Refer to that section for proofs of the statements below. 

\begin{definition}[minimal edge]
\label{def:minimal_edge}

Let $\mathcal{L}_d = \frac{1}{d} \mathbb{Z} \times \frac{1}{d} \mathbb{Z}$. A line segment $E$ with endpoints in $\mathcal{L}_d$ is said to be a \emph{$d$-minimal segment} if $E \cap \mathcal{L}_d$ consists precisely of the endpoints of $E$. 

\end{definition}

In Section 2 of \cite{turnerwu}, we defined a $G$-invariant weighting system on minimal edges. 

\begin{definition}[weight of an edge]
\label{def:weight}

Let $E_{p \to q}$ be an oriented $d$-minimal edge from endpoint $p = (\frac{w}{d}, \frac{x}{d})$ to $q = (\frac{y}{d}, \frac{z}{d})$. Then define the weight $W(E_{p \to q})$ of $E_{p \to q}$ to be

\begin{equation*}
W(E_{p \to q}) = \det \left[ \begin{pmatrix} d & 0 \\ 0 & d \end{pmatrix} \begin{pmatrix} w/d & y/d \\ x/d & z/d \end{pmatrix} \right] = \det \begin{pmatrix} w & y \\ x & z \end{pmatrix} \mod d.
\end{equation*}
\end{definition}

\begin{proposition}[$W$ is $G$-invariant]
\label{prop:weight_inv}

Let $E_{p \to q}$ be an oriented minimal edge and let $g \in G$. Then $W(E_{p \to q}) = \pm W(g(E)_{g(p) \to g(q)})$. Precisely, if $g$ is orientation preserving (i.e. $\det g=1$), $W(E_{p \to q}) = W(g(E)_{g(p) \to g(q)})$, and if $g$ is orientation reversing (i.e. $\det g=-1$), $W(E_{p \to q}) = -W(g(E)_{g(p) \to g(q)})$.

\end{proposition}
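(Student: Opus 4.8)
The plan is to reduce the weight to the determinant of an integer matrix built from the numerators of the endpoints, and then exploit multilinearity of the determinant together with the fact that lattice translations contribute multiples of $d$.

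First I would verify that $W(g(E)_{g(p) \to g(q)})$ is even well-defined, i.e. that $g(E)$ is again $d$-minimal. Writing $g = U \rtimes v$ with $U \in GL_2(\mathbb{Z})$ and $v \in \mathbb{Z}^2$, the map $x \mapsto Ux + v$ carries $\mathcal{L}_d = \frac{1}{d}\mathbb{Z}^2$ bijectively onto itself, since $U$ is an integer matrix invertible over $\mathbb{Z}$ and $v \in \mathbb{Z}^2 \subset \mathcal{L}_d$. Hence $g(E) \cap \mathcal{L}_d = g(E \cap \mathcal{L}_d) = \{g(p), g(q)\}$, so by Definition \ref{def:minimal_edge} the image $g(E)$ is $d$-minimal with endpoints $g(p), g(q)$, each of which has integer numerators because $d\,g(p) = U(dp) + dv \in \mathbb{Z}^2$ and similarly for $q$.

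Next I would reinterpret the weight. Setting $\mathbf{p} = (w,x)^{\mathsf{T}} = dp$ and $\mathbf{q} = (y,z)^{\mathsf{T}} = dq$, Definition \ref{def:weight} reads $W(E_{p \to q}) \equiv \det[\,\mathbf{p} \mid \mathbf{q}\,] \pmod d$, where $[\,\mathbf{p} \mid \mathbf{q}\,]$ denotes the matrix with columns $\mathbf{p}$ and $\mathbf{q}$. The numerator vectors of the image endpoints are then $d\,g(p) = U\mathbf{p} + dv$ and $d\,g(q) = U\mathbf{q} + dv$, so $W(g(E)_{g(p) \to g(q)}) \equiv \det[\, U\mathbf{p} + dv \mid U\mathbf{q} + dv \,] \pmod d$. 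The core computation is to expand this determinant using multilinearity in the columns:
\[
\det[\, U\mathbf{p} + dv \mid U\mathbf{q} + dv \,] = \det[\, U\mathbf{p} \mid U\mathbf{q} \,] + \det[\, U\mathbf{p} \mid dv \,] + \det[\, dv \mid U\mathbf{q} \,] + \det[\, dv \mid dv \,].
\]
The last term vanishes, and each of the two middle terms has a column equal to $dv$, hence is divisible by $d$ and vanishes modulo $d$. The first term equals $\det(U)\,\det[\,\mathbf{p} \mid \mathbf{q}\,]$. Therefore $W(g(E)_{g(p) \to g(q)}) \equiv \det(U)\,W(E_{p \to q}) \pmod d$. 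Since $\det U = \pm 1$ and $(\det U)^2 = 1$, multiplying through by $\det U$ gives $W(E_{p \to q}) = (\det U)\,W(g(E)_{g(p) \to g(q)})$, which is exactly the claimed equality, with the plus sign when $g$ is orientation preserving and the minus sign when $g$ is orientation reversing.

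I expect no serious obstacle: the argument is essentially the multilinear expansion displayed above. The only point requiring genuine care is conceptual rather than technical — recognizing that the translation part $v$ of the $G$-map, although it moves the edge, enters the determinant only through columns that are multiples of $d$ and so disappears modulo $d$. This is precisely the reason the weight is defined modulo $d$ rather than as an integer, and it is the step I would state most carefully. The secondary point, the preservation of $d$-minimality under $g$ that guarantees $W(g(E))$ is defined at all, I would dispatch first as above.
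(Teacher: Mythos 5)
Your proof is correct: the reduction to $\det[\,d\,g(p)\mid d\,g(q)\,]\equiv\det(U)\det[\,dp\mid dq\,]\pmod d$ via multilinearity, with the translation entering only through columns divisible by $d$, is exactly the right computation, and the preliminary check that $g$ preserves $d$-minimality is handled properly. Note that this paper does not reprove the proposition --- it explicitly defers to Section 2 of \cite{turnerwu} --- but the argument there is the same determinant computation, so your approach matches the intended one.
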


Minimal triangles are the $2$-dimensional version of minimal segments. 

\begin{definition}
A triangle $T$ is said to be $d$\emph{-minimal} if $T \cap \mathcal{L}_d$ consists precisely of the vertices of $T$. 
\end{definition}

Observe that the edges of a $d$-minimal triangle are $d$-minimal segments. Moreover, a $d$\textit{-minimal triangulation} is a triangulation whose facets are all $d$-minimal triangles.

In Section 3 of \cite{turnerwu}, we extended the weight on a minimal segment to a $G$-invariant weight on minimal triangles.

\begin{definition}
\label{def:weight_triangle}
Let $T$ be a $d$-minimal triangle with vertices $p, q,$ and $r$. Orient $T$ counterclockwise, and suppose WLOG this orients the edges $E^1, E^2, E^3$ of $T$ as follows: $E^1_{p \to q}, E^2_{q \to r},$ and $E^3_{r \to p}$. Then we define the weight $W(T)$ of $T$ to be the (unordered) multiset as follows:

\begin{equation}
W(T) := \left\{ W(E^1_{p \to q}), W(E^2_{q \to r}), W(E^3_{r \to p}) \right\}.
\label{eqn:weight_triangles}
\end{equation}

\end{definition}

\begin{proposition}[W(T) is invariant under $G$\footnote{In fact $W(T)$ is preserved by equidecomposability relations as well, but the proof of this is more difficult. See Section 3 of \cite{turnerwu}.}]
\label{prop:weight_invariant}
Let $T$ be a minimal triangle, and $g$ a $G$-map. Then

\begin{equation*}
W(T) = W(g(T)).
\end{equation*}
\end{proposition}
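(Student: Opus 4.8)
The plan is to reduce the statement entirely to the edge-level invariance already established in Proposition \ref{prop:weight_inv}, treating the orientation-preserving and orientation-reversing cases separately. The one auxiliary fact I will need is that reversing the orientation of a single edge negates its weight. This is immediate from Definition \ref{def:weight}: interchanging the roles of $p$ and $q$ swaps the two columns of the defining determinant, so $W(E_{q \to p}) = -W(E_{p \to q})$. With this in hand, the whole argument becomes a matter of tracking how $g$ interacts with the counterclockwise orientation used to define $W(T)$.

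First I would handle the case where $g$ is orientation-preserving, i.e. $\det g = 1$. Here $g$ carries the counterclockwise orientation of $T$ to the counterclockwise orientation of $g(T)$, so the three counterclockwise edges of $g(T)$ are exactly $g(E^1)_{g(p) \to g(q)}$, $g(E^2)_{g(q) \to g(r)}$, and $g(E^3)_{g(r) \to g(p)}$. By Proposition \ref{prop:weight_inv}, each of these has the same weight as the corresponding edge of $T$, so the two defining multisets in \eqref{eqn:weight_triangles} agree termwise and $W(T) = W(g(T))$.

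Next I would treat the orientation-reversing case, $\det g = -1$, which is the only place any real bookkeeping occurs. Now the image of the counterclockwise vertex order $(p,q,r)$ is the \emph{clockwise} vertex order of $g(T)$, so to compute $W(g(T))$ I must traverse its vertices in the reversed order $(g(p), g(r), g(q))$. Consequently each counterclockwise edge of $g(T)$ is the reversal of one of the edges $g(E^i)$, which by the auxiliary fact contributes a factor of $-1$; meanwhile Proposition \ref{prop:weight_inv} contributes a second factor of $-1$ precisely because $g$ is orientation-reversing. These two sign changes cancel, so the weight of each counterclockwise edge of $g(T)$ equals the weight of the matching edge $E^i$ of $T$, and the multisets again coincide.

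The only delicate point, and the step I expect to require the most care, is this cancellation of signs in the orientation-reversing case: I must verify that the sign introduced by re-traversing $g(T)$ counterclockwise exactly offsets the sign coming from Proposition \ref{prop:weight_inv}, so that each edge weight is unchanged as an element of the multiset. Everything else follows immediately from the edge-level result, since $W(T)$ is defined as an unordered multiset and so is insensitive to the cyclic relabeling of its three entries.
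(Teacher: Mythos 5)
Your argument is correct: the sign from reversing each edge when re-traversing $g(T)$ counterclockwise exactly cancels the sign from the orientation-reversing case of Proposition \ref{prop:weight_inv}, and the orientation-preserving case is immediate, so the unordered multiset in \eqref{eqn:weight_triangles} is unchanged. The paper itself omits the proof (deferring to Section 3 of \cite{turnerwu}), but your reduction to edge-level invariance with this sign bookkeeping is precisely the intended argument, so there is nothing to add.
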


Our final preliminary result gives a one-to-one correspondence between weights and $G$-equivalence classes of $d$-minimal triangles. 

\begin{theorem}
\label{thm:weight_classification}
Two $d$-minimal triangles $S$ and $T$ are $G$-equivalent if and only if $W(S) = W(T)$. 
\end{theorem}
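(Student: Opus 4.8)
The plan is to prove the two directions separately. The forward direction---that $G$-equivalence forces $W(S)=W(T)$---is immediate from Proposition \ref{prop:weight_invariant}, so the entire content lies in the converse. My strategy for the converse is to produce a canonical representative (a ``normal form'') for each $G$-orbit of $d$-minimal triangles, to read off the weight of this representative explicitly, and then to show that the residual symmetries available to us act on the normalizing data exactly as the symmetric group $S_3$ acts on the weight multiset. Matching weights will then force matching normal forms up to these symmetries, hence $G$-equivalence.

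First I would reduce to a normal form. Scaling by $d$ identifies a $d$-minimal triangle $T$ with the integral triangle $dT$ whose only lattice points are its vertices, and the $G$-action becomes the action of $GL_2(\mathbb{Z}) \rtimes d\mathbb{Z}^2$ on numerator coordinates. By Pick's theorem $dT$ has area $\tfrac12$, so its two edge vectors from any vertex form a $\mathbb{Z}$-basis; applying a suitable element of $SL_2(\mathbb{Z})$ (orientation-preserving, so the counterclockwise orientation used in Definition \ref{def:weight_triangle} is retained) sends these to the standard basis, and a translation by $d\mathbb{Z}^2$ moves the base vertex into $\{0,\dots,d-1\}^2$. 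This exhibits $T$ as $G$-equivalent to the triangle $\Delta_{\alpha,\beta}$ with vertices $\tfrac1d(\alpha,\beta)$, $\tfrac1d(\alpha+1,\beta)$, $\tfrac1d(\alpha,\beta+1)$ for some $0 \le \alpha,\beta < d$. A direct computation from Definition \ref{def:weight} then gives
\begin{equation*}
W(\Delta_{\alpha,\beta}) = \{\,-\alpha,\ -\beta,\ \alpha+\beta+1\,\} \bmod d,
\end{equation*}
a multiset of three residues summing to $1 \bmod d$.

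The heart of the argument is then to identify exactly when two normal forms are $G$-equivalent. Writing the weight as the ordered triple $(-\alpha,-\beta,\alpha+\beta+1)$, I would exhibit explicit $G$-maps that carry $\Delta_{\alpha,\beta}$ to another normal form and track their effect on $(\alpha,\beta)$: the orientation-preserving map $U = \left(\begin{smallmatrix} 0 & 1 \\ -1 & -1 \end{smallmatrix}\right)$ (a ``rotation'' of the standard triangle) sends $(\alpha,\beta)$ to $(\beta,-\alpha-\beta-1)$ and induces the cyclic permutation of the triple, while the orientation-reversing coordinate swap $(x_1,x_2)\mapsto(x_2,x_1)$ sends $\Delta_{\alpha,\beta}$ to $\Delta_{\beta,\alpha}$ and induces the transposition of its first two entries. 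Since a $3$-cycle and a transposition generate $S_3$, any two normal forms whose weight multisets coincide have triples related by some permutation in $S_3$, hence lie in a common $G$-orbit. Combined with the reduction step, $W(S)=W(T)$ forces $S$ and $T$ to share a normal form up to these symmetries, giving $S \sim_G T$.

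The main obstacle I anticipate is the bookkeeping in this last step, specifically the interplay between orientation and the weight multiset: the transpositions in $S_3$ are realized only by orientation-reversing elements of $G$, under which individual edge weights change sign by Proposition \ref{prop:weight_inv}, so one must check carefully that the unordered multiset is nonetheless preserved and that such a map genuinely sends one normal form to a \emph{different} normal form (rather than fixing a triangle as an internal symmetry). Verifying that the two explicit generators above induce precisely a $3$-cycle and a transposition on $(-\alpha,-\beta,\alpha+\beta+1)$---and that every permutation thereby arises from an honest $G$-map between normal forms---is the crux on which the converse rests.
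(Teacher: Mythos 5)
Your argument is correct: the forward direction is indeed just Proposition \ref{prop:weight_invariant}, the normal-form reduction via Pick's theorem is sound, the computation $W(\Delta_{\alpha,\beta})=\{-\alpha,-\beta,\alpha+\beta+1\}$ checks out with Definition \ref{def:weight}, and your two generators do act on the triple as a $3$-cycle and a transposition (the orientation-reversal worry you flag resolves itself because reversing orientation both negates each edge weight and reverses the cyclic traversal, so the counterclockwise multiset is unchanged, consistent with Proposition \ref{prop:weight_invariant}); since $(\alpha,\beta)\mapsto(-\alpha,-\beta,\alpha+\beta+1)$ is injective mod $d$, equal multisets force the normal forms into one $G$-orbit. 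Note that this paper does not actually prove Theorem \ref{thm:weight_classification} --- it is imported from Section 2 of \cite{turnerwu} --- so there is no in-text proof to compare against, but your reduction to translates of $\mathrm{Conv}\left((0,0),(\tfrac{1}{d},0),(0,\tfrac{1}{d})\right)$ inside the unit square is exactly the device the paper itself borrows from that source (see the proof of Corollary \ref{cor:pflip_cor}), so your route appears to be essentially the intended one.
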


\end{section}

\begin{section}{A Necessary and Sufficient Condition for Equidecomposability}
\label{sec:dehn}

In this section, we will present a necessary and sufficient condition for rational polygons $P$ and $Q$ to be rationally discretely equidecomposable. This condition consists requires three pieces of data from a polygon --- heuristically speaking, one requirement for each of the possible types of simplices in a triangulation: \emph{vertices}, \emph{edges}, and \emph{facets}. After proving our results theoretically, we will realize them constructively in Section \ref{sec:algorithm} with an algorithm for detecting and constructing equidecomposability relations. 

First, we need the following viewpoint and convention. Any equidecomposability relation $\mathcal{F}:P \to Q$ between denominator $d$ polygons can be viewed as fixing a $d'$-minimal triangulation  $\mathcal{T}_1$ (for some denominator $d'$ divisible by $d$) of $P$ and assigning a $G$-map $g_F$ to each face $F$ (vertex, edge, or facet, respectively) of $\mathcal{T}_1$ such that $g(F)$ is a face (vertex, edge, or facet, respectively) of some minimal triangulation $\mathcal{T}_2$ of $Q$. Hence, when enumerating equidecomposability relations with domain $P$, it suffices to consider those $\mathcal{F}$ that assign $G$-maps to the faces of some minimal triangulation of $P$. Moreover, in this case we write $\mathcal{F}_{d'}: (P, \mathcal{T}_1) \to (Q, \mathcal{T}_2)$ to emphasize the underlying triangulations and their denominator.\footnote{See the beginning of Section 2 of \cite{turnerwu} for a more careful discussion of this set-up.} 

\begin{subsection}{Definitions and Notation}
\label{subsec:definitions}

First we introduce some notation. Given a triangulation $\mathcal{T}$ of some rational polygon, let $\mathcal{T}^{\dotr}$, $\mathcal{T}^{-}$, and $\mathcal{T}^{\Delta}$ denote the set of vertices, set of open edges, and set of open facets, respectively, of $\mathcal{T}$. Also, in consistency with this paper, all triangulations of polygons consist entirely of rational simplices.   

The following definition describes a sort of `partial' equidecomposability relation, a \emph{facet map}. 

\begin{definition}[facet map]

Let $P$ and $Q$ be rational polygons with triangulations $\mathcal{T}_1$ and $\mathcal{T}_2$, respectively. We say that a map  $\mathcal{F}^{\Delta}: \mathcal{T}_1^{\Delta} \to  \mathcal{T}_2^{\Delta}$ is a \emph{facet map} if the following criteria are satisfied. 

\begin{enumerate}
\item $\mathcal{F}^{\Delta}|_{\mathcal{T}_1^{\Delta}}$ is a bijection to $\mathcal{T}_2^{\Delta}$.
\item For all $F \in \mathcal{T}_1^{\Delta}$, $\mathcal{F}^{\Delta}|_{F}$ is a $G$-map. 
\end{enumerate}

\end{definition}

Note that according to this definition, the behavior of $\mathcal{F}^{\Delta}$ on open edges and vertices of $\mathcal{T}_1$ is disregarded --- all that is controlled is the map's behavior on the set of facets $\mathcal{T}_1^{\Delta}$. Also observe that any equidecomposability relation is automatically a facet map. Our first goal is to develop criteria describing when a facet map may be extended to an equidecomposability relation (see Proposition \ref{prop:extend}).  

Next, we present a generalized version $\mathbf{W}_d$ of the weight $W_d$ defined in Section \ref{sec:prelim}. First we will define $\mathbf{W}_d$ on $d$-minimal edges. Consider the set of edges of weight $\pm i$ with denominator $d$. In general, not all of these edges are $G$-equivalent.\footnote{Consider $3$-minimal segments of weight $0$. The segment $E_1$ from $(0,0) \to (1/3,0)$ and $E_2$ from $(1/3,0) \to (2/3,0)$ both have weight $0$, but they are not $G$-equivalent because $E_1$ contains an integer lattice point while $E_2$ does not.} Hence, number the different equivalence classes from $1$ to $N_{\pm i}$ (we will show there are a finite number of these in Section \ref{sec:algorithm}). If a $\pm i$ edge belongs to the equivalence class $j \in \{1, \ldots, N_{\pm i} \}$, we define

\begin{equation*}
\mathbf{W}_d(E) := (\pm i, j)
\end{equation*}

and say that $E$ is a Weight $(\pm i, j)$ $d$\emph{-minimal edge}. 

\begin{remark}
\label{rmk:edge_equiv}
Observe that $d$-minimal edges $E$ and $F$ are $G$-equivalent if and only if $\mathbf{W}_d(E) = \mathbf{W}_d(F)$. 
\end{remark}

Also note that the concept of Weight $\mathbf{W}$ differs from the weight $W$ because it does not take into account orientation. Also observe that we use the word ``Weight'', with a capitalized `w', to refer to $\mathbf{W}$ as opposed to the weight $W$. 

Now, given $d'$ divisible by $d$, we can define $\mathbf{W}_{d'}$ on an arbitrary denominator $d$ polygon $P$.

\begin{definition}
\label{def:Weight}
Let $P$ be a denominator $d$ polygon and $d'$ a positive integer divisible by $d$. Then the boundary of $P$ may be uniquely regarded as a union $\cup E^{i}$ of $d'$-minimal edges $\{E^i\}$. We define the Weight (using a capital letter to distinguish it from the weight $W_d$ described in Section \ref{sec:prelim}) $\mathbf{W}_{d'}(P)$ of the polygon $P$ to be the following unordered multiset

\begin{equation*}
\mathbf{W}_{d'} := \bigcup \left\{ \mathbf{W}_{d'}(E^i) \right\}.
\end{equation*}

\end{definition}

Note that we have not presented a computational method for determining the $G$-equivalence class $\mathbf{W}_{d}(E)$ of a $d$-minimal edge as we did for $W(E)$ in Section \ref{sec:prelim}. This will be delayed until Section \ref{sec:algorithm}, as none of our methods from Section \ref{subsec:dehn} require direct computation.

\end{subsection}

\begin{subsection}{Extending a Facet Map}
\label{subsec:dehn}

Our goal here is to prove the following proposition that describes when a facet map can be `extended' to an equidecomposability relation.

\begin{proposition}
\label{prop:extend}
Let $P$ and $Q$ be denominator $d$ polygons with triangulations $\mathcal{T}_1$ and $\mathcal{T}_2$, respectively. Suppose $\mathcal{F}^{\Delta}: \mathcal{T}_1^{\Delta} \to \mathcal{T}_2^{\Delta}$ is a facet map. Then there exists a rational discrete equidecomposability relation $\mathcal{F}: P \to Q$ if and only if the following two conditions are satisified: 

\begin{enumerate}
\item $\mathrm{ehr}_P = \mathrm{ehr}_Q$ \emph{[vertex compatibility]}, and
\item $\mathbf{W}_d(P) = \mathbf{W}_d(Q)$ \emph{[edge compatibility]}. 
\end{enumerate}
\end{proposition}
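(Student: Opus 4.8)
The plan is to prove the two implications separately, disposing of the forward (necessity) direction quickly by appealing to invariants already in hand, and devoting the real work to the reverse (sufficiency) direction, where the given facet map must be enlarged into a genuine $\mathbb{Z}\times\mathbb{Z}$-preserving bijection of all of $P$ onto all of $Q$. For necessity, suppose an equidecomposability relation $\mathcal{F}:P\to Q$ exists. Condition (1) is immediate: as observed after Definition \ref{def:equi}, every equidecomposability relation preserves the Ehrhart quasi-polynomial, so $\mathrm{ehr}_P=\mathrm{ehr}_Q$. Condition (2) is precisely the assertion that the boundary Weight $\mathbf{W}_d$ is an equidecomposability invariant; this is the weight invariant developed in \cite{turnerwu} (compare the footnote to Proposition \ref{prop:weight_invariant}), which I would simply invoke.

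For sufficiency I would first reduce to minimal triangulations. Refining each facet $F$ of $\mathcal{T}_1$ into $d'$-minimal triangles (for a common $d'$ divisible by $d$) and transporting this refinement through each $G$-map $g_F:=\mathcal{F}^{\Delta}|_F$ produces a $d'$-minimal triangulation of $Q$ together with a refined facet map; since $G$-maps carry $d'$-minimal triangles to $d'$-minimal triangles, this is legitimate, and I may henceforth assume $\mathcal{T}_1,\mathcal{T}_2$ are $d'$-minimal. I would also note that $\mathbf{W}_d(P)=\mathbf{W}_d(Q)$ upgrades to $\mathbf{W}_{d'}(P)=\mathbf{W}_{d'}(Q)$, because subdividing a $d$-minimal edge into $d'$-minimal pieces is a canonical operation depending only on its $G$-class (Remark \ref{rmk:edge_equiv}).

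The first key step is edge matching. By Proposition \ref{prop:weight_inv} and Remark \ref{rmk:edge_equiv}, each $g_F$ carries the three $d'$-minimal edges of $F$ to the three $d'$-minimal edges of $\mathcal{F}^{\Delta}(F)$ preserving their $\mathbf{W}_{d'}$-values, so the multiset of edge-Weights of $F$ equals that of $\mathcal{F}^{\Delta}(F)$. Summing over all facets, and using that $\mathcal{F}^{\Delta}$ is a bijection onto $\mathcal{T}_2^{\Delta}$, the total multiset of facet edge-Weights agrees on the two sides. On each side this total is $2I+B$, where $I$ records the interior-edge Weights (each interior edge lies in two facets) and $B=\mathbf{W}_{d'}$ records the boundary-edge Weights. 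Since $B_1=B_2$ by condition (2), I cancel the boundary contributions and, doubling being injective on multisets, conclude $I_1=I_2$. Thus the full multisets of open-edge Weights of $\mathcal{T}_1$ and $\mathcal{T}_2$ coincide, and Remark \ref{rmk:edge_equiv} lets me biject the open edges of $\mathcal{T}_1$ to those of $\mathcal{T}_2$, realizing each correspondence by a $G$-map.

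The second key step is vertex matching, which I expect to be the main obstacle. In a $d'$-minimal triangulation every point of $P\cap\mathcal{L}_{d'}$ is a vertex, since open facets and open $d'$-minimal edges contain no points of $\mathcal{L}_{d'}$; hence the vertex set of $\mathcal{T}_1$ is exactly $P\cap\mathcal{L}_{d'}$. The $G$-orbit of a point $p\in\mathcal{L}_{d'}$ is determined by its exact denominator $\min\{e:p\in\mathcal{L}_e\}$, because $G$-maps preserve every $\mathcal{L}_e$ and, conversely, $GL_2(\mathbb{Z})$ acts transitively on the points of $(\mathbb{Z}/d')^2$ having a fixed gcd with $d'$. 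Möbius inversion recovers, from the values $\{\mathrm{ehr}_P(t)\}=\{|P\cap\mathcal{L}_t|\}$, the number of points of each exact denominator; condition (1) then forces the vertex $G$-class multisets of $\mathcal{T}_1$ and $\mathcal{T}_2$ to agree, so I may biject vertices to vertices by $G$-maps. Finally I assemble $\mathcal{F}$ by declaring it to equal $\mathcal{F}^{\Delta}$ on open facets, the edge bijection on open edges, and the vertex bijection on vertices; since $P$ and $Q$ are the disjoint unions of the open cells of $\mathcal{T}_1$ and $\mathcal{T}_2$ respectively, this is a piecewise-$G$ bijection and hence an equidecomposability relation in the sense of Definition \ref{def:equi}. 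The delicate points to get right are the gcd-classification of $G$-orbits on $\mathcal{L}_{d'}$ together with the Möbius step, and the clean statement of the $2I+B$ edge-conservation identity.
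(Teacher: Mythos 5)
Your proposal is correct and follows essentially the same route as the paper: necessity via the invariance of $\mathrm{ehr}$ and $\mathbf{W}_d$, and sufficiency by separately matching vertices (classifying $G$-orbits of lattice points by exact denominator and inverting the Ehrhart counts, as in Lemmas \ref{lem:points1} and \ref{lem:points2}) and edges (your $2I+B$ identity is exactly the counting identity of Lemma \ref{lem:Weight_eqn}, combined with Lemma \ref{lem:weight_inherit} to pass from $d$ to $d'$). The only differences are cosmetic phrasings of these same lemmas.
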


The given labeling of (1) and (2) is for the following reason. If [vertex compatibility] is satisfied, we will demonstrate that the map $\mathcal{F}^{\Delta}$ may be extended to send the vertices of $\mathcal{T}_1$ to the vertices of $\mathcal{T}_2$ via a piecewise $G$-bijection. Similarly, if [edge compatibility] holds, we show $\mathcal{F}^{\Delta}$ may be extended to send the edges of $\mathcal{T}_1$ to the edges of $\mathcal{T}_2$ via a piecewise $G$-bijection. To prove Proposition \ref{prop:extend}, we first deal with conditions (1) and (2) separately. 

\begin{subsubsection}{Vertex Compatibility}
\label{sub:vertex}

Our first goal is to show that Ehrhart equivalence of $P$ and $Q$ implies that the facet map $\mathcal{F}^{\Delta}$ can be extended to biject the set $\mathcal{T}_1^{\dotr}$ to $\mathcal{T}_2^{\dotr}$ with $G$-maps. We do so with the following set-up.

A point $p \in \mathbb{R}^2$ is said to be \emph{$d$-primitive} if $d$ is the least integer such that $p \in \mathcal{L}_d$. Equivalently, this implies that when the coordinates of $p$ are written in lowest terms, $d$ is the least common multiple of the denominators of each coordinate. Let $\mathcal{S}_d$ be the set of all $d$-primitive points. The next lemma shows that the $G$-orbit of a $d$-minimal point is exactly $\mathcal{S}_d$.

\begin{lemma}
\label{lem:points1}
Let $p \in \mathbb{R}^2$ be a $d$-primitive point. Then given $g \in G$, $g(p)$ is again a $d$-primitive point. Moreover given a $d$-primitive point $p' \in \mathcal{S}_d$, there exists a $G$-map sending $p$ to $p'$. 
\end{lemma}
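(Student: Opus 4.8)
The plan is to establish the two assertions separately: first that the $G$-action preserves $d$-primitivity, and second that $G$ acts transitively on $\mathcal{S}_d$. For the first, the key observation is that each $g = U \rtimes v \in G$ restricts to a bijection of $\mathcal{L}_m$ onto itself for every positive integer $m$. Indeed, if $q = \frac{1}{m}(a,b)^T$ with $a,b \in \mathbb{Z}$, then $Uq = \frac{1}{m}U(a,b)^T$ again lies in $\mathcal{L}_m$ since $U$ has integer entries, and adding $v \in \mathbb{Z}^2 \subseteq \mathcal{L}_m$ keeps us in $\mathcal{L}_m$; thus $g(\mathcal{L}_m) \subseteq \mathcal{L}_m$. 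Applying the same reasoning to $g^{-1} = U^{-1} \rtimes (-U^{-1}v) \in G$ (note $U^{-1} \in GL_2(\mathbb{Z})$ since $\det U = \pm 1$) gives the reverse inclusion, hence equality. Consequently $p \in \mathcal{L}_m \iff g(p) \in \mathcal{L}_m$ for every $m$, so $p$ and $g(p)$ lie in exactly the same lattices $\mathcal{L}_m$ and therefore share the same minimal denominator; thus $g(p)$ is $d$-primitive.

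For the second assertion, I would first record the arithmetic characterization that $p = \frac{1}{d}(a,b)^T$ is $d$-primitive if and only if $\gcd(a,b,d) = 1$; this follows from the (easy gcd) fact that the set of $m$ with $p \in \mathcal{L}_m$ is closed under gcd and hence consists of the multiples of the minimal denominator. It then suffices to send an arbitrary $d$-primitive $p$ to the fixed canonical point $(1/d, 0)$, since composing one such map with the inverse of another carries any $d$-primitive $p$ to any $d$-primitive $p'$. Unwinding the affine action, sending $\frac{1}{d}(a,b)^T$ to $(1/d,0)^T$ by some $U \rtimes v$ amounts to finding $U \in GL_2(\mathbb{Z})$ with $U(a,b)^T \equiv (1,0)^T \pmod d$ and then setting $v = \frac{1}{d}\big((1,0)^T - U(a,b)^T\big)$, which is an integer vector precisely because of the congruence.

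So the crux is producing such a $U$, and I would do this in two steps. First, extended Euclid/Bezout yields $U_1 \in GL_2(\mathbb{Z})$ (in fact of determinant $1$) with $U_1(a,b)^T = (e,0)^T$, where $e = \gcd(a,b)$; the $d$-primitivity hypothesis gives $\gcd(e,d) = \gcd(a,b,d) = 1$, so $e$ is a unit modulo $d$ with inverse $f$. Second---and this is the step requiring care---I cannot simply scale the first coordinate by $f$, since the naive diagonal matrix $\mathrm{diag}(f,e)$ has determinant $ef \neq \pm 1$ and so is not unimodular. Instead I would take the integer vector $(f,d)^T$, whose coordinates are coprime because $\gcd(f,d) = 1$, and complete it (again via Bezout) to a matrix $U_2 = \begin{pmatrix} f & * \\ d & * \end{pmatrix} \in GL_2(\mathbb{Z})$; then $U_2(e,0)^T = (ef, de)^T \equiv (1,0)^T \pmod d$. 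Setting $U = U_2 U_1$ finishes the construction.

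I expect this last step to be the main obstacle: the transitivity hinges on realizing the modular unit $f$ as the top-left entry of a genuinely unimodular \emph{integer} matrix rather than a rational diagonal one, which is exactly where the coprimality $\gcd(f,d)=1$ must be exploited. Equivalently, one may phrase the whole step as the surjectivity of the reduction $GL_2(\mathbb{Z}) \to GL_2(\mathbb{Z}/d\mathbb{Z})$ together with transitivity of the latter on unimodular vectors of $(\mathbb{Z}/d\mathbb{Z})^2$; I prefer the explicit construction above, as it keeps the argument constructive in keeping with the aims of the paper.
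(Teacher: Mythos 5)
Your proposal is correct. The first assertion is handled exactly as in the paper (both arguments reduce to the fact that each $g\in G$ and its inverse preserve every lattice $\mathcal{L}_m$, so $p$ and $g(p)$ have the same minimal denominator). For transitivity, however, you take a genuinely different route to the same normal form $(1/d,0)$. The paper works translation-first: it quotes the classical fact that the $GL_2(\mathbb{Z})$-orbit of $(1,0)$ is the set of visible integer points, so the orbit of $(1/d,0)$ is the set of visible points of $\mathcal{L}_d$, and then proves a small gcd lemma (Lemma \ref{lem:nt}: if $\gcd(a,b)$ is coprime to $d$, then $\gcd(a+qd,b)=1$ for some $q$) to show every $d$-primitive point is an integer translate of a visible point of $\mathcal{L}_d$. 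You work matrix-first: after characterizing $d$-primitivity of $\frac{1}{d}(a,b)$ as $\gcd(a,b,d)=1$, you build $U\in GL_2(\mathbb{Z})$ with $U(a,b)^T\equiv(1,0)^T \pmod{d}$ by composing a Bezout reduction to $(\gcd(a,b),0)^T$ with a unimodular completion of the coprime vector $(f,d)^T$, where $f$ is an inverse of $\gcd(a,b)$ modulo $d$, and you correctly observe that the translation part is then forced to be integral, so it absorbs the remaining multiple of $d$. Your identification of the unimodular realization of the modular unit $f$ as the crux is accurate; that is precisely where $\gcd(a,b,d)=1$ enters in either treatment. What each buys: your argument is self-contained and constructive (no appeal to the visible-points classification, no need for Lemma \ref{lem:nt}), and it is in effect a hands-on instance of lifting from $GL_2(\mathbb{Z}/d\mathbb{Z})$ to $GL_2(\mathbb{Z})$ --- the same maneuver the paper carries out later in Lemma \ref{lem:prim_segment}; the paper's version gets the result more cheaply from a standard fact plus Lemma \ref{lem:nt}, which it then reuses in that later proof, so the extra lemma is not wasted effort there.
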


Before proving this, it is necessary to have the following simple fact from elementary number theory. We provide a proof for the sake of completeness.

\begin{lemma}
\label{lem:nt}

Let $a,b,d \in \mathbb{Z}$. Suppose $\gcd(a,b)$ is relatively prime to $d$. Then there exists $q \in \mathbb{Z}$ such that $\gcd(a + qd, b) = 1$. 

\end{lemma}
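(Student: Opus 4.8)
The plan is to reduce the statement to a congruence condition one prime at a time and then patch the local choices together with the Chinese Remainder Theorem. The key reformulation is that $\gcd(a + qd, b) = 1$ holds if and only if no prime $p$ dividing $b$ also divides $a + qd$. We may assume $b \neq 0$, so that $b$ has only finitely many prime divisors (this is the only case relevant to our later application of this lemma). It therefore suffices to produce a single integer $q$ with the property that, for every prime $p \mid b$, we have $p \nmid a + qd$.

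First I would dispose of the primes $p$ dividing both $b$ and $d$. For such a prime, $a + qd \equiv a \pmod{p}$ regardless of $q$, so it is enough to check that $p \nmid a$. This is precisely where the hypothesis is used: if $p$ divided $a$ as well as $b$, then $p$ would divide $\gcd(a,b)$, and since also $p \mid d$ this would contradict the assumption that $\gcd(\gcd(a,b), d) = 1$. Hence $p \nmid a$, and consequently $p \nmid a + qd$ for every choice of $q$.

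It remains to handle the primes $p$ dividing $b$ but not $d$. For each such prime, $d$ is invertible modulo $p$, so the bad values of $q$ --- those with $a + qd \equiv 0 \pmod{p}$ --- form exactly one residue class modulo $p$, namely $q \equiv -a d^{-1} \pmod{p}$. Since $p \geq 2$, at least one residue class modulo $p$ is not forbidden. Letting $p_1, \ldots, p_k$ denote the distinct primes dividing $b$ but not $d$, I would for each $i$ pick a residue $s_i$ modulo $p_i$ that is not the forbidden one, and then invoke the Chinese Remainder Theorem to find an integer $q$ with $q \equiv s_i \pmod{p_i}$ for all $i$. For this $q$, no $p_i$ divides $a + qd$, and by the previous paragraph no prime dividing both $b$ and $d$ divides $a + qd$ either; thus no prime divisor of $b$ divides $a + qd$, which yields $\gcd(a + qd, b) = 1$.

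There is no serious obstacle here, as the argument is entirely elementary. The one step that genuinely relies on the hypothesis --- and hence the crux I would flag --- is the treatment of primes dividing both $b$ and $d$, since without the coprimality assumption $\gcd(\gcd(a,b),d) = 1$ such a prime could divide $a$ and thereby force $p \mid a + qd$ for \emph{every} $q$, making the desired conclusion impossible.
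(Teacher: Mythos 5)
Your proof is correct, but it takes a genuinely different route from the paper's. You argue prime-by-prime: for each prime $p \mid b$ you identify the forbidden residues of $q$ (none, when $p \mid d$, thanks to the hypothesis; exactly one class mod $p$, when $p \nmid d$) and then glue local choices with the Chinese Remainder Theorem. The paper instead writes $k = \gcd(a,b)$, $a = km$, $b = kn$, factors $n = n'q$ where $n'$ collects the prime factors of $n$ dividing $k$ and $q$ those that do not, and shows by a two-case analysis on the primes of $b$ that this \emph{specific} $q$ works; so the paper produces one explicit value of $q$ (a divisor of $b/\gcd(a,b)$) and avoids CRT altogether, at the cost of a slightly more opaque verification. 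Your version is more transparent about exactly where the hypothesis $\gcd(\gcd(a,b),d)=1$ is used, and your explicit caveat that $b \neq 0$ is needed is well taken: the statement is in fact false for $b = 0$ (e.g.\ $a = 3$, $b = 0$, $d = 5$ satisfies the hypothesis but $|3 + 5q| = 1$ has no integer solution), and the paper's own proof tacitly assumes $b \neq 0$ when it factors $n$ into primes. Note, though, that the downstream application in Lemma \ref{lem:points1} can genuinely encounter $b = 0$ (a primitive point such as $(3/5, 0)$), so the degenerate case is not entirely irrelevant there and would need to be handled by translating in the other coordinate; this is a gap in the paper's usage rather than in your argument.
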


\begin{proof}
Let $k = \gcd(a,b)$. By assumption, $\gcd(k,d) = 1$. Set $a = km$ and $b = kn$. Write $n$ as a product of (perhaps repeated) primes: $n = \prod_{i} s_i$. Then set $n' = \prod_{s_i | k} s_i$, and let $n = n'q$. Observe that $n'$ and $q$ share no common factors.  

Consider the number $a + qd = mk + qd$. We claim that $\gcd(a + qd, b) = 1$. To see this, first note that $b = kn'q$. By construction, we see $kn'$ and $q$ share no common factors. Suppose $r$ is a prime dividing $b$.  

If $r|kn'$, we show that $r$ does not divide $mk + qd$. First, $r|k$ because every factor of $n'$ is a factor of $k$ by construction. However, $r$ does not divide $d$ because $\gcd(k,d) = 1$. Finally, $r$ does not divide $q$ because $r|kn'$ and $kn'$ and $q$ have no common factors. 

If instead $r|q$, then to show $r$ does not divide $mk + qd$, it suffices to prove $r$ does not divide $mk$. First, $r$ does not divide $k$ because $k$ and $r$ share no common factors. Finally, $r$ does not divide $m$ because $r|q|n$ and $m$ and $n$ are relatively prime. With these two cases, we conclude that $a + qd$ and $b$ are relatively prime, as desired. 
\end{proof}

Now we return to the proof of Lemma \ref{lem:points1}.

\begin{proof}[Proof of Lemma \ref{lem:points1} ]
First we show that $d$-primitive points are $G$-mapped to $d$-primitive points. Since the lattice $\mathcal{L}_d$ is preserved by $G$, $g(p)$ is a point of $\mathcal{L}_d$ for all $g \in G$. We only need to check that $p$ cannot be sent to a $d'$ primitive point with $d' < d$. Suppose $q := g(p) \in \mathcal{L}_d'$ for some $g \in G$. Then it follows that $g^{-1}(q) = p$ is a point of $\mathcal{L}_{d'}$, since the lattice $\mathcal{L}_{d'}$ is also preserved by $G$. This contradicts the assumption that $p \in \mathcal{L}_d$. 

Now we show that all $d$-primitive points are $G$-equivalent. It is well-known that the $GL_2(\mathbb{Z})$-orbit of the point $p_0 = (1,0)$ consists of all \textit{visible points} in $\mathbb{Z}^2$, that is, all points of the form $(a,b)$ with $\gcd(a,b) = 1$. Therefore, the $GL_2(\mathbb{Z})$-orbit of $(\frac{1}{d}, 0)$ consists of all visible points in $\mathcal{L}_d$, that is, all points of the form $(\frac{m}{d}, \frac{n}{d})$ with $\gcd(m,n) = 1$. Therefore, we just need to show that the set of integer translations of the visible points in $\mathcal{L}_d$ is precisely the set of $d$-primitive points.

Let $p = (\frac{a}{d}, \frac{b}{d})$ be a $d$-primitive point. We will construct an integer vector $v$ such that $v + p$ is visible in $\mathcal{L}_d$. Let $k = \gcd(a,b)$. Note that $\gcd(k,d) = 1$, because otherwise, $p$ would not be $d$-primitive. Apply Lemma \ref{lem:nt} to produce $q$ such that $a + qd$ and $b$ are relatively prime.

Therefore, the translation $(q, 0) + (\frac{a}{d}, \frac{b}{d})$ is a visible point in $\mathcal{L}_d$, as desired. We conclude that all $d$-primitive points are $G$-equivalent. 
\end{proof}

The next lemma allows us to count the number of primitive points in a rational polygon in terms of its Ehrhart quasi-polynomial. 

\begin{lemma}
\label{lem:points2}
Given rational polygons $P$ and $Q$, $\mathrm{ehr}_{P}(t) = \mathrm{ehr}_{Q}(t)$ for all $t \in \mathbb{N}$ if and only if $|P \cap \mathcal{S}_N| = |Q \cap \mathcal{S}_N|$ for all $N \in \mathbb{N}$.
\end{lemma}

\begin{proof}
First we show the forward direction. As a base case, we see the following when $N = 1$. 
\begin{equation*}
|P \cap \mathcal{S}_1| = |P \cap \mathbb{Z}| = \mathrm{ehr}_{P}(1) = \mathrm{ehr}_{Q}(1) = |Q \cap \mathcal{S}_1| = |Q \cap \mathbb{Z}|
\end{equation*}

Now, let $N > 1$. To run induction, suppose $|\mathcal{S}_n \cap P| = |\mathcal{S}_n \cap Q|$ for all $n<N$. Observe that $\mathrm{ehr}_P(N) = |NP \cap \mathbb{Z}^2| = |P \cap \frac{1}{N} \mathbb{Z} \times \frac{1}{N} \mathbb{Z}| = |P \cap \mathcal{L}_N|$. Hence, we see that $|P \cap \mathcal{S}_N| = \mathrm{ehr}_{P}(N) - |(P \cap \mathcal{L}_N) - \mathcal{S}_N|$, where the second term on the right-hand side counts the number of non-primitive points in $\mathcal{L}_N$ contained in $P$. 

We can compute the number of such points using the fact that $\mathcal{L}_N = \sqcup_{n|N} \mathcal{S}_n$. 

\begin{equation*}
|(P \cap \mathcal{L}_N) - \mathcal{S}_N| = \sum _{n|N, n \neq N} |P \cap \mathcal{S}_n|
\end{equation*}

The same equation holds for $Q$ by the same reasoning.

\begin{equation*}
|(Q \cap \mathcal{L}_N) - \mathcal{S}_N| = \sum _{n|N, n \neq N} |Q \cap \mathcal{S}_n|
\end{equation*}

By the inductive assumption, 
\begin{equation*}
\sum _{n|N, n \neq N} |P \cap \mathcal{S}_n| = \sum _{n|N, n \neq N} |Q \cap \mathcal{S}_n|.
\end{equation*}

Hence $|P \cap \mathcal{S}_N| = |Q \cap \mathcal{S}_N|$, as desired.

The backward direction follows immediately from the fact that 

\begin{equation*}
\mathrm{ehr}_P(N) = \sum _{n|N} |P \cap \mathcal{S}_n|.  
\end{equation*}
\end{proof}

\begin{remark}
\label{rmk:vertex}
Observe that if $\mathrm{ehr}_{P}(t) = \mathrm{ehr}_{Q}(t)$, Lemmas \ref{lem:points1} and \ref{lem:points2} imply that, for all $d$, the vertices of any $d$-minimal triangulation of $P$ has a piecewise $G$-bijection to the vertices of any $d$-triangulation of $Q$. 
\end{remark}

This achieves our goal for extending the facet map to biject the set of vertices, condition (1) [vertex compatibility] from Proposition \ref{prop:extend}. 

\end{subsubsection}

\begin{subsubsection}{Edge Compatibility}

In this section, our task is to show that $\mathbf{W}_{d}(P) = \mathbf{W}_{d}(Q)$ implies that $\mathcal{F}^{\Delta}$ can be extended to biject $\mathcal{T}_1^{-}$ to $\mathcal{T}_2^{-}$ in such a way that given an edge $E \in \mathcal{T}_1^{-}$, $\mathcal{F}^{\Delta}|_{E}$ is a $G$-map. In other words, $\mathcal{F}^{\Delta}$ can be extended to be a piecewise $G$-bijection from $\mathcal{T}_1^{-}$ to $\mathcal{T}_2^{-}$. 

The first step requires a slight generalization of Equation 8 in Section 3 of \cite{turnerwu}. 

\begin{lemma}
\label{lem:Weight_eqn}

Suppose $\mathcal{T}_1$ is a triangulation of $P$ consisting of denominator $d'$ simplices ($d|d'$). Let $\mathbbm{1}_{(\pm i, j)}$, be the indicator function for the ordered pair $(\pm i, j)$ and set

\begin{equation*}
\Delta_n^{(\pm i, j)} (\mathcal{T}_1) = \left\{ F| \, F \in \mathcal{T}_1^{\Delta} \, \mathrm{and} \, \sum _{E \, \mathrm{an} \, \mathrm{edge} \, \mathrm{in} \, F} \mathbbm{1}_{(\pm i, j)}(\mathbf{W}_d(E)) = n  \right\}.
\end{equation*}

Then, taking the triangulation $\mathcal{T}_1$ to be implicit, we have

\begin{equation}
\begin{split}
\label{eqn:Weight}
\sum _{E \in \mathcal{T}_1^{-}} \mathbbm{1}_{(\pm i, j)}(\mathbf{W}_d(E)) = \frac{1}{2}\left(|\Delta_1^{(\pm i, j)}| + 2|\Delta_2^{(\pm i, j)}| + 3|\Delta_3^{(\pm i,j)}|\right) \\
+ \frac{1}{2} \sum _{\substack{E \, \in \mathcal{T}_1^{-} \\ E \subset \partial P}} \mathbbm{1}_{(\pm i,j)}(\mathbf{W}_d(E)). 
\end{split}
\end{equation}

\end{lemma}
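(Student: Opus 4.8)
The plan is to prove Equation \eqref{eqn:Weight} by a double-counting argument on the incidences between facets and edges of Weight $(\pm i, j)$. Fix the pair $(\pm i, j)$ and, for brevity, write $\mathbbm{1}(E) := \mathbbm{1}_{(\pm i, j)}(\mathbf{W}_d(E))$, so that $\mathbbm{1}(E) = 1$ exactly when $E$ has Weight $(\pm i, j)$ and is $0$ otherwise; since $\mathbf{W}_d$ does not depend on a choice of orientation, $\mathbbm{1}(E)$ is a well-defined function of the unoriented edge $E$. The quantity I want to understand is the total count $\sum_{E \in \mathcal{T}_1^{-}} \mathbbm{1}(E)$ of such edges in the triangulation, which is the left-hand side of \eqref{eqn:Weight}.

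First I would count the number of (facet, edge) incidences in which the edge has Weight $(\pm i, j)$, that is, the number of pairs $(F, E)$ with $F \in \mathcal{T}_1^{\Delta}$, $E$ an edge of $F$, and $\mathbbm{1}(E) = 1$, and I would evaluate this count in two ways. Summing first over facets, a facet $F$ contributes exactly the number of its edges of Weight $(\pm i, j)$, which is precisely the index $n$ for which $F \in \Delta_n^{(\pm i, j)}$. Since every triangle has three edges, $\mathcal{T}_1^{\Delta}$ is partitioned by $n \in \{0,1,2,3\}$ into the sets $\Delta_n^{(\pm i, j)}$, and the incidence count equals $\sum_{n=0}^{3} n\,|\Delta_n^{(\pm i, j)}| = |\Delta_1^{(\pm i, j)}| + 2|\Delta_2^{(\pm i, j)}| + 3|\Delta_3^{(\pm i, j)}|$, the $n=0$ term vanishing.

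Next I would count the same incidences by summing over edges instead. The key combinatorial fact is that, in a triangulation of a polygon, each interior edge is an edge of exactly two facets while each boundary edge (one contained in $\partial P$) is an edge of exactly one facet. Writing $A = \sum_{E \in \mathcal{T}_1^{-},\, E \not\subset \partial P} \mathbbm{1}(E)$ for the number of interior Weight-$(\pm i, j)$ edges and $B = \sum_{E \in \mathcal{T}_1^{-},\, E \subset \partial P} \mathbbm{1}(E)$ for the boundary ones, this gives the incidence count as $2A + B$. Equating the two counts yields $2A + B = |\Delta_1^{(\pm i, j)}| + 2|\Delta_2^{(\pm i, j)}| + 3|\Delta_3^{(\pm i, j)}|$. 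Since the left-hand side of \eqref{eqn:Weight} is exactly $A + B$, I can solve for $A$ and substitute to get $A + B = \tfrac12(2A + B) + \tfrac12 B = \tfrac12\bigl(|\Delta_1^{(\pm i, j)}| + 2|\Delta_2^{(\pm i, j)}| + 3|\Delta_3^{(\pm i, j)}|\bigr) + \tfrac12 B$, which is precisely \eqref{eqn:Weight}.

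The algebra is routine; the one point that genuinely requires care, and which I regard as the main obstacle, is justifying the incidence dichotomy that each interior edge meets exactly two facets and each boundary edge exactly one. This relies on $\mathcal{T}_1$ being a genuine simplicial decomposition in which facets meet edge-to-edge, so that no triangle vertex lies in the relative interior of another triangle's edge and there are no ``T-junctions''; under this hypothesis the union of the two incident facets is a neighborhood of each interior edge, and the edges with a single incident facet are exactly those lying in $\partial P$. I would verify that this edge-to-edge property follows from the notion of triangulation fixed in Section \ref{sec:prelim}, after which the two incidence counts above combine to give the stated identity.
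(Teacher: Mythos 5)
Your double-counting of facet--edge incidences (each interior edge lying in exactly two facets, each boundary edge in exactly one, then solving $2A+B = |\Delta_1^{(\pm i,j)}|+2|\Delta_2^{(\pm i,j)}|+3|\Delta_3^{(\pm i,j)}|$ for $A+B$) is correct and is exactly the argument the paper has in mind: it omits the proof here, citing that it is in exact analogy to Equation 8 of Lemma 3.13 of \cite{turnerwu}, which is the same incidence count. Your closing caveat about the triangulation being edge-to-edge is the right thing to flag, and it is guaranteed by the simplicial-decomposition convention of Section \ref{sec:prelim}.
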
 

We omit the proof, as it is in exact analogy to the proof of Equation 8 from Lemma 3.13 of \cite{turnerwu}. 

Next, we recover the following uniqueness statement.

\begin{lemma}
\label{lem:weight_inherit}
If $d|d'$, then $\mathbf{W}_{d'}(P) = \mathbf{W}_{d'}(Q)$ if and only if $\mathbf{W}_{d}(P) = \mathbf{W}_{d}(Q)$.
\end{lemma}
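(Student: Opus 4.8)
The plan is to use that, since $d \mid d'$, the decomposition of $\partial P$ into $d'$-minimal edges refines its decomposition into $d$-minimal edges. Indeed, a $d$-minimal edge $E$ meets $\mathcal{L}_d$ only in its endpoints, so every subdivision point added in passing to $\mathcal{L}_{d'}$ lies in $\mathcal{L}_{d'} \setminus \mathcal{L}_d$ and falls in the interior of some $d$-minimal edge. First I would record that this refinement is $G$-equivariant: since every $g \in G$ preserves both $\mathcal{L}_d$ and $\mathcal{L}_{d'}$ and maps line segments to line segments, if $g(E) = E'$ then $g$ carries the $d'$-subdivision of $E$ bijectively onto that of $E'$. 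By Proposition \ref{prop:weight_inv} and Remark \ref{rmk:edge_equiv}, the multiset of $d'$-Weights obtained by subdividing $E$ depends only on the $G$-class $\mathbf{W}_d(E)$, which (unwinding Definition \ref{def:Weight}) shows that $\mathbf{W}_{d'}(P)$ is obtained from $\mathbf{W}_d(P)$ by replacing each coarse edge with the Weights of its $d'$-pieces.

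The ``if'' direction follows at once from this description. If $\mathbf{W}_d(P) = \mathbf{W}_d(Q)$, fix a $\mathbf{W}_d$-preserving bijection between the $d$-minimal boundary edges of $P$ and $Q$ and realize each matched pair by a single $G$-map (Remark \ref{rmk:edge_equiv}). By $G$-equivariance these same maps carry $d'$-subdivisions onto $d'$-subdivisions, giving a $\mathbf{W}_{d'}$-preserving bijection of $d'$-minimal edges, hence $\mathbf{W}_{d'}(P) = \mathbf{W}_{d'}(Q)$.

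The content of the lemma is the ``only if'' direction, for which I would show that $\mathbf{W}_d(P)$ can be reconstructed from $\mathbf{W}_{d'}(P)$ alone. The key observation is that a $d$-minimal edge $E$ is recovered from any one of its $d'$-pieces $f$ incident to a point of $\mathcal{L}_d$: such an $f$ determines an endpoint $p \in \mathcal{L}_d$ together with a direction, and $E$ is exactly the segment from $p$ through $f$ extended until the next point of $\mathcal{L}_d$. Because $\mathcal{L}_d$-membership of an endpoint is $G$-invariant, each $d'$-Weight class carries a well-defined number in $\{0, 1, 2\}$ of endpoints in $\mathcal{L}_d$, and the assignment $f \mapsto \mathbf{W}_d(E)$ descends to a $G$-equivariant ``coarsening'' map on $d'$-Weight classes. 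Moreover the number of $d'$-pieces of $E$ is itself a $G$-invariant of $\mathbf{W}_d(E)$, so each coarse class is either \emph{short} (its edges are already $d'$-minimal, appearing as the Weight-$2$ pieces) or \emph{long} (each of its edges contributes exactly two Weight-$1$ end-pieces together with some Weight-$0$ interior pieces).

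Reading off these data from $\mathbf{W}_{d'}(P)$ then recovers $\mathbf{W}_d(P)$ class by class: the multiplicity of a short coarse class equals that of the corresponding Weight-$2$ class, while the multiplicity of a long coarse class $c$ equals \emph{half} the number of Weight-$1$ pieces that coarsen to $c$, since both end-pieces of each such edge coarsen to $c$. As this reconstruction depends only on $\mathbf{W}_{d'}(P)$, the hypothesis $\mathbf{W}_{d'}(P) = \mathbf{W}_{d'}(Q)$ forces $\mathbf{W}_d(P) = \mathbf{W}_d(Q)$. I expect the main obstacle to be making the coarsening map and this counting fully rigorous --- in particular, checking that the number of subdivision points, and hence the short/long dichotomy and the $\mathcal{L}_d$-endpoint counts, are genuinely constant on each $d'$-Weight class and each $d$-Weight class, so that the factor-of-two bookkeeping is well-defined independently of any chosen traversal of $\partial P$.
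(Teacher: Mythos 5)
Your argument is correct and follows essentially the same route as the paper's: both hinge on showing that a $d'$-minimal child segment determines, up to $G$-equivalence, its unique $d$-minimal parent segment, so that $\mathbf{W}_{d'}(P)$ reconstructs $\mathbf{W}_d(P)$ by a counting argument, while the converse direction is immediate from $G$-equivariance of the subdivision. The only cosmetic difference is the bookkeeping: you count the end-pieces of each parent and divide by $2$, whereas the paper counts all $n = d'/d$ children of each parent and divides by $n$.
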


\begin{proof}

As in the proof of Lemma \ref{lem:weight_inherit} we will show that $\mathbf{W}_{d'}(P)$ uniquely determines $\mathbf{W}_d(P)$. This would justify the forward statement of the lemma.

Set $n = d'/d$. Let $\{ E^k \}_{k = 1} ^N$ be the $d$-minimal segments on the boundary of $P$. The edge $E^k$ is the union of $n$ $d'$-minimal segments $\{ E^{k,\ell} \}_{\ell = 1} ^n$. We call these $E^{k,\ell}$ the \emph{child segments} of the \emph{parent segment} $E^k$. Also, observe that $\cup _{k = 1} ^N \{ E^{k,\ell} \}_{\ell = 1} ^n$ is the set of $d'$-minimal edges comprising the boundary of $P$. 

Let $(\pm i, j) \in \mathbf{W}_{d'}(P)$. Then there exists $k, \ell$ such that $\mathbf{W}_{d'}(E^{k, \ell}) = (\pm i, j)$. We claim that if $\mathbf{W}_{d'}(E^{k', \ell'}) = (\pm i, j)$, then $\mathbf{W}_{d}(E^{k'}) = \mathbf{W}_{d}(E^{k})$. In other words, the Weight of child segments uniquely determines the Weight of the parent segment. 

Recall from Remark \ref{rmk:edge_equiv} that $\mathbf{W}_{d'}(E^{k, \ell}) = \mathbf{W}_{d'}(E^{k', \ell'})$ if and only if there exists $g \in G$ such that $g(E^{k, \ell}) = E^{k', \ell'}$. Recall that for all positive integers $D$, $G$-maps preserve $D$-minimal segments. Therefore, $g(E^{k})$ is a $d$-minimal segment containing $E^{k', \ell'}$. Clearly, there is unique $d$-minimal segment containing $E^{k', \ell'}$, the segment $E^{k'}$. Therefore, we see $g(E^{k}) = E^{k'}$, so that, again by Remark \ref{rmk:edge_equiv}, $\mathbf{W}_{d}(E^{k'}) = \mathbf{W}_{d}(E^{k})$, as desired. 

By this reasoning, the different elements occuring in the multiset $\mathbf{W}_d(P)$ are uniquely determined by the elements in $\mathbf{W}_{d'}(P)$. To conclude, we just need to show that an element's multiplicity is uniquely determined by $\mathbf{W}_{d'}(P)$. 

Suppose that there are $m$ edges in $\cup _{k = 1} ^N \{ E^{k,\ell} \}_{\ell = 1} ^n$ that are children of a parent edge of Weight $(\pm i, j)$. Then since a parent $d$-minimal edge is divided into $n$ $d'$-minimal segments, we see there are precisely $m/n$ parent edges of Weight $(\pm i ,j)$. That is, the ordered pair $(\pm i, j)$ occurs exactly $m/n$ times in $\mathbf{W}_d(P)$.

This implies that $\mathbf{W}_{d}(P)$ is uniquely determined by $\mathbf{W}_{d'}(P)$. 

Finally, the backward statement is an immediate consequence of Remark \ref{rmk:edge_equiv}. 
\end{proof}

Now we can accomplish the goal of this section on edge compatiability.  

\begin{lemma}
\label{lem:edge} Let $P$ and $Q$ be denominator $d$ polygons with triangulations $\mathcal{T}_1$ and $\mathcal{T}_2$, respectively. Suppose $\mathcal{F}^{\Delta}: \mathcal{T}_1^{\Delta} \to  \mathcal{T}_2^{\Delta}$ is a facet map. Then $\mathbf{W}_d(P) = \mathbf{W}_d(Q)$ if and only if $\mathcal{F}^{\Delta}$ can be extended to restrict to a piecewise $G$-bijection from $\mathcal{T}_1^{-}$ to $\mathcal{T}_2^{-}$. That is,

\begin{enumerate}
\item $\mathcal{F}^{\Delta}|_{\mathcal{T}_1^{-}}$ is a bijection to $\mathcal{T}_2^{-}$, and

\item for all $E \in \mathcal{T}_1^{-}$, $\mathcal{F}^{\Delta}|_{E}$ is a $G$-map.
\end{enumerate}
\end{lemma}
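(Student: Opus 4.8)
The plan is to reduce the lemma to a single counting identity among Weight classes of edges and then to extract that identity from Equation \ref{eqn:Weight}. First I would record the consequence of Remark \ref{rmk:edge_equiv}: two $d$-minimal edges lie in the same $G$-orbit precisely when they carry the same Weight $\mathbf{W}_d$, and whenever $\mathbf{W}_d(E) = \mathbf{W}_d(E')$ some $G$-map carries $E$ onto $E'$. It follows that a piecewise $G$-bijection $\mathcal{T}_1^{-} \to \mathcal{T}_2^{-}$ of the kind demanded by (1)--(2) exists if and only if, for every Weight class $(\pm i, j)$,
\[
\sum_{E \in \mathcal{T}_1^{-}} \mathbbm{1}_{(\pm i, j)}(\mathbf{W}_d(E)) = \sum_{E \in \mathcal{T}_2^{-}} \mathbbm{1}_{(\pm i, j)}(\mathbf{W}_d(E)).
\]
Indeed, if such a bijection exists then each edge is sent to its image by a $G$-map, so the two sides count $G$-equivalent edges and must agree; conversely, if the counts agree in every class, I would match the edges of each class arbitrarily and realize each matched pair by a $G$-map supplied by Remark \ref{rmk:edge_equiv}, thereby defining $\mathcal{F}^{\Delta}$ on $\mathcal{T}_1^{-}$. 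This choice may be made independently of the values of $\mathcal{F}^{\Delta}$ on facets, which is legitimate because an equidecomposability relation assigns $G$-maps to the open faces of each dimension separately.

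The bridge to the boundary data $\mathbf{W}_d(P)$ and $\mathbf{W}_d(Q)$ is that the facet map already equalizes all interior contributions. For each $F \in \mathcal{T}_1^{\Delta}$, the restriction $\mathcal{F}^{\Delta}|_F$ is a $G$-map carrying $F$ onto $\mathcal{F}^{\Delta}(F)$, hence carrying the three edges of $F$ bijectively onto the three edges of $\mathcal{F}^{\Delta}(F)$; by Remark \ref{rmk:edge_equiv} it preserves the Weight of each edge. Thus $F$ and $\mathcal{F}^{\Delta}(F)$ share the same multiset of edge-Weights, so $F$ has exactly $n$ edges of Weight $(\pm i, j)$ if and only if $\mathcal{F}^{\Delta}(F)$ does. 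Since $\mathcal{F}^{\Delta}$ is a bijection $\mathcal{T}_1^{\Delta} \to \mathcal{T}_2^{\Delta}$, I would conclude $|\Delta_n^{(\pm i, j)}(\mathcal{T}_1)| = |\Delta_n^{(\pm i, j)}(\mathcal{T}_2)|$ for every $n$ and every class, so that the facet terms $\tfrac{1}{2}\bigl(|\Delta_1^{(\pm i, j)}| + 2|\Delta_2^{(\pm i, j)}| + 3|\Delta_3^{(\pm i,j)}|\bigr)$ on the right-hand side of Equation \ref{eqn:Weight} coincide for $\mathcal{T}_1$ and $\mathcal{T}_2$.

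With the facet terms equalized, I would read both directions off Equation \ref{eqn:Weight} at once. Taking $\mathcal{T}_1$ and $\mathcal{T}_2$ to be $d$-minimal, the boundary edges are exactly the $d$-minimal segments composing $\partial P$ and $\partial Q$, so the boundary sum $\tfrac{1}{2}\sum_{E \subset \partial P} \mathbbm{1}_{(\pm i,j)}(\mathbf{W}_d(E))$ records (up to the factor $\tfrac12$) the multiplicity of $(\pm i, j)$ in $\mathbf{W}_d(P)$, and similarly for $Q$. Subtracting the equal facet terms from Equation \ref{eqn:Weight} then shows that the total edge counts of the first paragraph agree for every class if and only if these boundary multiplicities agree for every class, i.e. if and only if $\mathbf{W}_d(P) = \mathbf{W}_d(Q)$. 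Chaining this with the equivalence of the first paragraph gives both implications of the lemma.

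The step I expect to be most delicate is the denominator bookkeeping rather than anything structural: Equation \ref{eqn:Weight} is stated for a denominator-$d'$ triangulation with $d \mid d'$, while $\mathbf{W}_d(P)$ is read from the $d$-minimal decomposition of $\partial P$. I plan to run the argument with $d$-minimal $\mathcal{T}_1, \mathcal{T}_2$ (so $d' = d$) and, if a finer triangulation must be handled, to invoke Lemma \ref{lem:weight_inherit} to pass between $\mathbf{W}_{d'}$ and $\mathbf{W}_d$ without affecting the equality. The genuinely structural input is the observation of the second paragraph, that one $G$-map per facet already pins down the entire interior edge-Weight count via the partition $\{\Delta_n^{(\pm i,j)}\}$; everything after that amounts to subtracting equal quantities from Equation \ref{eqn:Weight}.
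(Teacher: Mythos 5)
Your proposal is correct and follows essentially the same route as the paper: both directions are read off Equation \ref{eqn:Weight}, using the fact that the facet map forces $|\Delta_n^{(\pm i,j)}(\mathcal{T}_1)| = |\Delta_n^{(\pm i,j)}(\mathcal{T}_2)|$, that matching per-class edge counts yield a piecewise $G$-bijection via Remark \ref{rmk:edge_equiv}, and Lemma \ref{lem:weight_inherit} for the passage between $\mathbf{W}_{d'}$ and $\mathbf{W}_d$. Your reorganization into a single chain of equivalences (rather than two separately argued implications) is a presentational difference only.
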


\begin{proof}

First we prove the backwards direction. WLOG suppose $\mathcal{T}_1$ and $\mathcal{T}_2$ are $d'$-minimal triangles for some $d'$ divisible by $d$ (otherwise, we may refine the original triangulations to have the specified form). Suppose $\mathcal{F}^{\Delta}$ has properties (1) and (2). By Lemma \ref{lem:Weight_eqn}, 

\begin{equation}
\begin{split}
\label{eqn:Weight2}
\sum _{E \in \mathcal{T}_1^{-}} \mathbbm{1}_{(\pm i,j)}(\mathbf{W}_{d'}(E))
-  \frac{1}{2}\left(|\Delta_1^{(\pm i, j)}(\mathcal{T}_1)| + 2|\Delta_2^{(\pm i, j)}(\mathcal{T}_1)| + 3|\Delta_3^{(\pm i,j)}(\mathcal{T}_1)|\right) = \\ 
\frac{1}{2} \sum _{\substack{E \, \in \mathcal{T}_1^{-} \\ E \subset \partial P}} \mathbbm{1}_{(\pm i,j)}(\mathbf{W}_{d'}(E)). 
\end{split}
\end{equation}

Observe that the LHS of Equation \ref{eqn:Weight2} is invariant under the map $\mathcal{F}^{\Delta}$. Therefore, so is the RHS of Equation \ref{eqn:Weight2}. Moreover, the value of the RHS over all pairs $(\pm i, j)$ uniquely determines $\mathbf{W}_{d'}(P)$, and hence $\mathbf{W}_{d'}(Q)$ as well. By Lemma \ref{lem:weight_inherit}, this implies $\mathbf{W}_d(P) = \mathbf{W}_d(Q)$, as desired. 

For the other direction, let's suppose $\mathbf{W}_d(P) = \mathbf{W}_d(Q)$. Then using Equation \ref{eqn:Weight} and Lemma \ref{lem:weight_inherit}, we again recover Equation \ref{eqn:Weight2} above. Specifically, we see

\begin{equation}
\begin{split}
\label{eqn:WeightP}
\sum _{E \in \mathcal{T}_1^{-}} \mathbbm{1}_{(\pm i, j)}(\mathbf{W}_{d'}(E)) = \frac{1}{2}\left(|\Delta_1^{(\pm i, j)}(\mathcal{T}_1)| + 2|\Delta_2^{(\pm i, j)}(\mathcal{T}_1)| + 3|\Delta_3^{(\pm i,j)}(\mathcal{T}_1)|\right) \\
+ \frac{1}{2} \sum _{\substack{E \, \in \mathcal{T}_1^{-} \\ E \subset \partial P}} \mathbbm{1}_{(\pm i,j)}(\mathbf{W}_{d'}(E)). 
\end{split}
\end{equation}

Moreover, since $\mathcal{F}^{\Delta}$ is a piecewise $G$-bijection from $\mathcal{T}_1^{\Delta}$ to $\mathcal{T}_2^{\Delta}$

\begin{equation}
\begin{split}
\label{eqn:1.10.1}
|\Delta_1^{(\pm i, j)}(\mathcal{T}_1)| + 2|\Delta_2^{(\pm i, j)}(\mathcal{T}_1)| + 3|\Delta_3^{(\pm i,j)}(\mathcal{T}_1)| = \\
|\Delta_1^{(\pm i, j)}(\mathcal{T}_2)| + 2|\Delta_2^{(\pm i, j)}(\mathcal{T}_2)| + 3|\Delta_3^{(\pm i,j)}(\mathcal{T}_2)|.
\end{split}
\end{equation}

Also, since $\mathbf{W}_d(P) = \mathbf{W}_d(Q)$, we see $\mathbf{W}_{d'}(P) = \mathbf{W}_{d'}(Q)$ by Lemma \ref{lem:weight_inherit}. Thus,

\begin{equation}
\label{eqn:1.10.2}
\sum _{\substack{E \, \in \mathcal{T}_1^{-} \\ E \subset \partial P}} \mathbbm{1}_{(\pm i,j)}(\mathbf{W}_{d'}(E)) = \sum _{\substack{E \, \in \mathcal{T}_2^{-} \\ E \subset \partial Q}} \mathbbm{1}_{(\pm i,j)}(\mathbf{W}_{d'}(E)).
\end{equation}

Apply Equation \ref{eqn:Weight} to $(Q, \mathcal{T}_2)$ to see

\begin{equation}
\begin{split}
\label{eqn:WeightQ}
\sum _{E \in \mathcal{T}_2^{-}} \mathbbm{1}_{(\pm i, j)}(\mathbf{W}_{d'}(E)) = \frac{1}{2}\left(|\Delta_1^{(\pm i, j)}(\mathcal{T}_2)| + 2|\Delta_2^{(\pm i, j)}(\mathcal{T}_2)| + 3|\Delta_3^{(\pm i,j)}(\mathcal{T}_2)|\right) \\
+ \frac{1}{2} \sum _{\substack{E \, \in \mathcal{T}_2^{-} \\ E \subset \partial Q}} \mathbbm{1}_{(\pm i,j)}(\mathbf{W}_{d'}(E))
\end{split}
\end{equation}

Using Equations \ref{eqn:WeightP}, \ref{eqn:1.10.1}, \ref{eqn:1.10.2}, and \ref{eqn:WeightQ}, we see

\begin{equation}
\label{eqn:edge_count}
\sum _{E \in \mathcal{T}_1^{-}} \mathbbm{1}_{(\pm i, j)}(\mathbf{W}_{d'}(E)) = \sum _{E \in \mathcal{T}_2^{-}} \mathbbm{1}_{(\pm i, j)}(\mathbf{W}_{d'}(E)).
\end{equation}

Equation \ref{eqn:edge_count} implies that for every $(\pm i, j)$, $\mathcal{T}_1$ and $\mathcal{T}_2$ have the same amount of $(\pm i, j)$ $d'$-minimal edges. Since all $(\pm i, j)$ $d'$-minimal edges edges are $G$-equivalent, we can construct a piecewise $G$-bijection from $\mathcal{T}_1^{-}$ to $\mathcal{T}_2^{-}$ and extend $\mathcal{F}^{\Delta}$ to restrict to this bijection on $\mathcal{T}_1$. This completes the proof of the forward direction.

\end{proof}

\end{subsubsection}

Now we can complete the proof of Proposition \ref{prop:extend}.

\begin{proof}[Proof of Proposition \ref{prop:extend}]

Suppose first that $\mathcal{F}$ is a discrete equidecomposability relation from $P$ to $Q$. Recall that Ehrhart-quasipolynomials are preserved by equidecomposability relations. Furthermore, the backward direction of Lemma \ref{lem:edge} implies that $\mathbf{W}_d(P)$ is preserved by equidecomposability relations. This proves the forward direction of Proposition \ref{prop:extend}.   

Now suppose that (1)[vertex compatibility] and (2)[edge compatibility] hold from the statement of Proposition \ref{prop:extend}. Remark \ref{rmk:vertex} implies that we can construct a piecewise $G$-bijection $\phi_1$ from $\mathcal{T}_1^{\dotr}$ to $\mathcal{T}_2^{\dotr}$, and Lemma \ref{lem:edge} implies that we can construct a piecewise $G$-bijection $\phi_2$ from $\mathcal{T}_1^{-}$ to $\mathcal{T}_2^{-}$. Updating $\mathcal{F}^{\Delta}$ to restrict to $\phi_1$ on $\mathcal{T}_1^{\dotr}$ and to $\phi_2$ on $\mathcal{T}_1^{-}$ gives rise to an equidecomposability relation $\mathcal{F}$ between $P$ and $Q$, as desired.   

\end{proof}

\end{subsection}

\begin{subsection}{Existence of a Facet Map}
\label{subsec:dehn2}

Now, given denominator $d$ polygons $P$ and $Q$ with triangulations $\mathcal{T}_1$ and $\mathcal{T}_2$, respectively, we will come up with a criterion for the existence of a facet map $\mathcal{F}^{\Delta}:  \mathcal{T}_1^{\Delta} \to \mathcal{T}_2^{\Delta}$ in terms of a family of discrete dynamical systems $\mathcal{D}_{d'}$ (one for each $d'$ divisible by $d$), associated to $d'$-minimal triangulations of denominator $d$. This criterion combined with Proposition \ref{prop:extend} provides the main theorem of this section, Theorem \ref{thm:dehn}, a necessary and sufficient condition for rational discrete equidecomposabliity. 

\begin{definition}[weight class]
An unordered triple $\omega$ is said to be a $d$-\emph{weight class} if there exists a $d$-minimal triangle $S$ satisfying $W(S) = \omega$.  
\end{definition}

It is helpful to recall from Theorem \ref{thm:weight_classification} that $d$-weight classes are in bijection with $G$-equivalence classes of $d$-minimal triangles. 

Next we have an important concept: \emph{pseudo-flippability}.

\begin{definition}[pseudo-flippability]
\label{def:pseudoflip}
We say that $d$-weight classes $\omega_1$ and $\omega_2$ are \emph{pseudo-flippable} if there exist adjacent $d$-minimal triangles $S_1$ and $S_2$ in $\mathbb{R}^2$ with weights $\omega_1$ and $\omega_2$, respectively, that form a parallelogram.

\end{definition}

\begin{figure}[H]
    \centering
    \includegraphics[keepaspectratio=true, width=13cm]{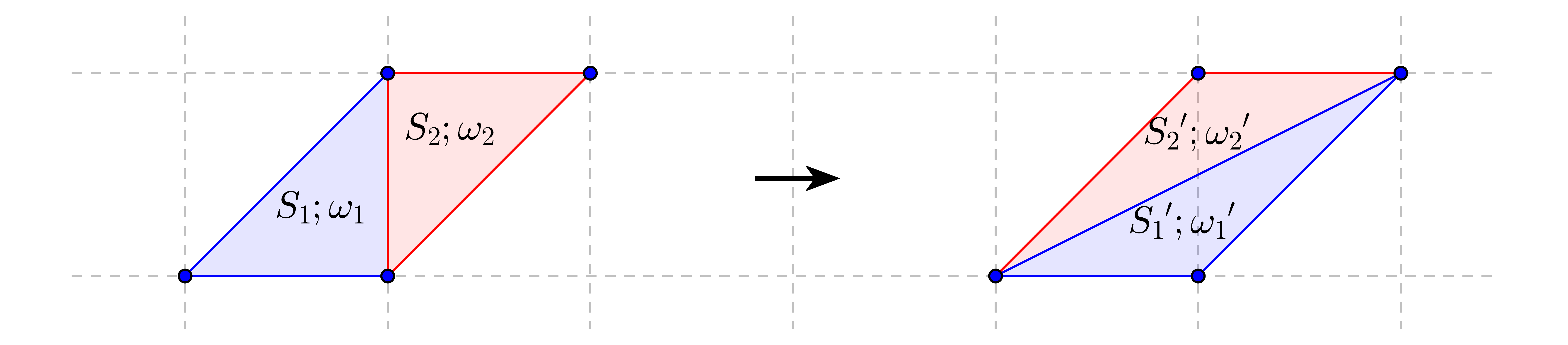}
    \caption{\small $S_1$ and $S_2$ before and after the flip. }
    \label{fig:pseudo-flip}
\end{figure}

This definition is motivated by the notions	``flippability'', ``flip moves'', and ``flip graphs'' from classical discrete geometry (see Chapter 3 of \cite{devadoss}). In this context, two triangles $S_1$ and $S_2$ in a triangulation are flippable if they are adjacent and form a convex quadrilateral. In this case, the diagonal of the quadrilateral that they form may be ``exchanged'' as in Figure \ref{fig:pseudo-flip}. This type of move is known in the literature as a \emph{flip}, which we denote by the notation $F_{S_1 S_2}$. Explicitly, given a triangulation $\mathcal{T}$ containing flippable triangles $S_1$ and $S_2$, $\mathcal{T}' = F_{S_1 S_2} \mathcal{T}$ is the new triangulation obtained by flipping $S_1$ and $S_2$. 

As a special case, note that flippable triangles in a $d$-minimal triangulation form a parallelogram, hence the reason for the seeming restrictiveness of Definition \ref{def:pseudoflip} (see the Introduction of \cite{caputo}).

The next task is to construct a discrete dynamical system $\mathcal{D}_{d'}$ that can detect certain facet maps, as we show in Proposition \ref{prop:facet_map_detect}. 

\begin{subsubsection}{Construction of $\mathcal{D}_{d'}(P)$}
\label{subsub:construct}
Let $P$ be a denominator $d$ polygon and let $d'$ be a positive integer divisible by $d$. Select an initial $d'$-minimal triangulation $\mathcal{T}_0(P)$ of $P$, and form the (unordered) multiset

\begin{equation}
\label{eqn:tau0}
\tau_0(P) := \bigcup _{F \in \mathcal{T}_0^{\Delta}(P)}\left\{ W_{d'}(F) \right\}
\end{equation}
of weights of $d'$-minimal triangles in $\mathcal{T}_0(P)$. We refer to this as the \emph{initial pseudo-triangulation}. Now we define an action called a \emph{pseudo-flip} $SF_{\omega_1 \omega_2}$ on $\tau_0(P)$, where $\omega_1$ and $\omega_2$ are pseudo-flippable, that will produce a new multiset $F_{\omega_1 \omega_2}\tau_0(P)$ of weights of $d'$-minimal triangles. We again refer to this multiset as a \emph{pseudo-triangulation}. 

Select triangles $S_1$ and $S_2$ with weights $\omega_1$ and $\omega_2$, respectively, that lie adjacent and form a parallelogram. The common side shared by $S_1$ and $S_2$ forms a diagonal. Exchanging this diagonal, as in Figure \ref{fig:pseudo-flip}, gives rise to two new $d'$-minimal triangles $S_1'$ and $S_2'$ with weights $\omega_1'$ and $\omega_2'$.\footnote{\label{foot:flip} From this construction, it is not clear that $\omega_1'$ and $\omega_2'$ do not depend on the initial choice of parallelogram. This turns out to be true, but to preserve continuity of this exposition, we will delay the proof of this fact until Section \ref{sec:algorithm} Proposition \ref{prop:well_defined}. } 

Thus, we define 
\begin{equation*}
SF_{\omega_1 \omega_2}\tau_0(P) = \left[\tau_0(P) \backslash \{ \omega_1, \omega_2 \}\right] \cup \{ \omega_1', \omega_2' \}.
\end{equation*}

\begin{definition}
\label{def:dds}
We define $\mathcal{D}_{d'}(P)$ to be the collection of all pseudo-triangulations that are obtainable from $\tau_0(P)$ by a series of pseudo-flips.
\end{definition}  

Observe that $\mathcal{D}_{d'}(P)$ is finite because there are a finite number of $G$-equivalence classes (hence a finite number of weight classes), and all pseudo-flip-equivalent pseudo-triangulations have the same finite cardinality. Moreover, $\mathcal{D}_{d'}(P)$ is independent of the initial choice of triangulation $\mathcal{T}_0(P)$. This is a consequence of a well-known theorem stating that any two $d'$-minimal triangulations $\mathcal{T}_0(P)$ and $\mathcal{T}_0'(P)$ are connected by a finite sequence of \emph{classical} flips, (see Chapter 3 of \cite{devadoss}). 

Explicity, there exists a sequence of flips $\{ F_{S_{1,i} S_{2,i}} \}_{i = 1} ^N$ such that 

\begin{equation*}
F_{S_{1,N} S_{2,N}} F_{S_{1,N-1} S_{2,N-1}} \cdots F_{S_{1,1} S_{2,1}} \mathcal{T}_0(P) = \mathcal{T}_0'(P). 
\end{equation*}

Observe from the definitions that any flip $F_{S_{1,j} S_{2,j}}$ induces a pseudo-flip $SF_{W_{d'}(S_{1,j}) W_{d'}(S_{2,j})}$. Let $W_{d'}(S_{i,j}) = \omega_{i,j}$. Therefore,

\begin{equation*}
SF_{\omega_{1,N} \omega_{2,N}} SF_{\omega_{1,N-1} \omega_{2,N-1}} \cdots SF_{\omega_{1,1} \omega_{2,1}} \tau_0(P) = \tau_0'(P).
\end{equation*}

Therefore, all possible initial pseudo-triangulations are connected under pseudo-flips, implying that Definition \ref{def:dds} is in fact well-defined. This concludes Section \ref{subsub:construct}.  

\end{subsubsection}

Now we return to the original purpose of Section \ref{subsec:dehn2}, a criterion for the existence of a facet map. 

\begin{proposition}[existence of a facet map]
\label{prop:facet_map_detect}
Let $P$ and $Q$ be denominator $d$ polygons. Then $\mathcal{D}_{d'}(P) = \mathcal{D}_{d'}(Q)$ for some $d'$ divisible by $d$ if and only if there exists a facet map $\mathcal{F}^{\Delta}: \mathcal{T}_1^{\Delta} \to \mathcal{T}_2^{\Delta}$ for some triangulation $\mathcal{T}_1$ of $P$ and some triangulation $\mathcal{T}_2$ of $Q$.  
\end{proposition}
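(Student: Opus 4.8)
The plan is to recast the statement entirely in terms of multisets of facet weights, so that Theorem \ref{thm:weight_classification} converts ``facet map'' into ``equal weight multiset,'' and then to show that the pseudo-flip orbits coincide exactly when such a multiset can be realized on both polygons at once. For a $d'$-minimal triangulation $\mathcal{T}$ write $\tau(\mathcal{T}) := \bigcup_{F \in \mathcal{T}^{\Delta}} \{ W_{d'}(F) \}$ for its multiset of facet weights, as in Equation \ref{eqn:tau0}, and let $R_{d'}(X)$ denote the set of all such $\tau(\mathcal{T})$ as $\mathcal{T}$ ranges over $d'$-minimal triangulations of $X$. Because two $d'$-minimal triangles are $G$-equivalent iff they carry the same weight, a facet map between $d'$-minimal triangulations $\mathcal{T}_1$ of $P$ and $\mathcal{T}_2$ of $Q$ exists iff $\tau(\mathcal{T}_1) = \tau(\mathcal{T}_2)$: a weight-respecting bijection of facets is precisely a bijection restricting to a $G$-map on each facet. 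Any facet map between arbitrary triangulations refines to one between $d'$-minimal triangulations (subdivide each facet $F$ of $\mathcal{T}_1$ and carry the subdivision onto $\mathcal{F}^{\Delta}(F)$ by the $G$-map $g_F$, which preserves $\mathcal{L}_{d'}$), so the right-hand side of the proposition is equivalent to $R_{d'}(P) \cap R_{d'}(Q) \neq \emptyset$ for some $d'$. Finally $R_{d'}(X) \subseteq \mathcal{D}_{d'}(X)$, since an honest flip induces a pseudo-flip and all $d'$-minimal triangulations are joined by honest flips (the flip-connectivity used for well-definedness in Section \ref{subsub:construct}); thus every $\tau(\mathcal{T})$ lies in the pseudo-flip orbit $\mathcal{D}_{d'}(X)$ of $\tau_0(X)$.

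The backward direction is then immediate. Given a facet map, refine to a common denominator $d'$ to obtain $d'$-minimal triangulations with $\tau(\mathcal{T}_1) = \tau(\mathcal{T}_2) =: \sigma$. Then $\sigma \in R_{d'}(P) \cap R_{d'}(Q) \subseteq \mathcal{D}_{d'}(P) \cap \mathcal{D}_{d'}(Q)$. Since each $\mathcal{D}_{d'}(\cdot)$ is a single pseudo-flip orbit, i.e. an equivalence class (pseudo-flips are reversible, as flipping the opposite diagonal undoes the move), two such orbits sharing an element must coincide, so $\mathcal{D}_{d'}(P) = \mathcal{D}_{d'}(Q)$.

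For the forward direction the entire content collapses to a \emph{realizability} statement: $\mathcal{D}_{d'}(X) = R_{d'}(X)$, i.e. every pseudo-triangulation in the orbit is the weight multiset of an honest $d'$-minimal triangulation of $X$. Granting this, if $\mathcal{D}_{d'}(P) = \mathcal{D}_{d'}(Q)$ then $\tau_0(P) \in \mathcal{D}_{d'}(P) = \mathcal{D}_{d'}(Q) = R_{d'}(Q)$, so some $d'$-minimal triangulation $\mathcal{T}_2$ of $Q$ satisfies $\tau(\mathcal{T}_2) = \tau_0(P) = \tau(\mathcal{T}_0(P))$; pairing the equal-weight facets of $\mathcal{T}_0(P)$ and $\mathcal{T}_2$ via Theorem \ref{thm:weight_classification} yields the desired facet map. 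I would prove realizability by induction on the number of pseudo-flips needed to reach a given $\sigma \in \mathcal{D}_{d'}(X)$ from $\tau_0(X)$, with base case $\tau_0(X) = \tau(\mathcal{T}_0(X))$.

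The inductive step is where I expect the genuine difficulty to lie, and it is the main obstacle of the proof. Suppose $\sigma = \tau(\mathcal{T})$ for an honest triangulation $\mathcal{T}$ and $\sigma' = SF_{\omega_1 \omega_2}\sigma$. The trouble is that the two facets of $\mathcal{T}$ carrying weights $\omega_1$ and $\omega_2$ need not be adjacent, let alone form a parallelogram, so no honest flip of $\mathcal{T}$ directly realizes the pseudo-flip. The key step is therefore to exhibit, inside $X$, an honest triangulation with weight multiset $\sigma$ in which some adjacent pair of facets of weights $\omega_1,\omega_2$ genuinely forms a parallelogram, after which flipping that parallelogram realizes $\sigma'$. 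I would attempt this by installing the required parallelogram locally: use pseudo-flippability of $(\omega_1,\omega_2)$ to produce a model parallelogram, use Theorem \ref{thm:weight_classification} together with the $G$-invariance of weight (Proposition \ref{prop:weight_invariant}) to seat a facet of weight $\omega_1$ correctly, and re-triangulate a neighborhood so that its $\omega_2$-partner appears, all while leaving the remaining facet weights untouched. Controlling this local surgery so that it stays within $X$ and preserves $\sigma$ elsewhere is the crux; I expect it to rely essentially on the classification of minimal triangles by weight and on the flip-connectivity of triangulations already invoked for well-definedness.
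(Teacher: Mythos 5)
Your backward direction is sound and essentially identical to the paper's: refine to $d'$-minimal triangulations, observe that a facet map forces $\tau(\mathcal{T}_1)=\tau(\mathcal{T}_2)$ by Theorem \ref{thm:weight_classification}, and use that each $\mathcal{D}_{d'}(\cdot)$ is a single pseudo-flip orbit. The forward direction, however, has a genuine gap. You reduce everything to the realizability claim $\mathcal{D}_{d'}(X) = R_{d'}(X)$ --- that every pseudo-triangulation in the orbit is the weight multiset of an \emph{honest} $d'$-minimal triangulation of $X$ --- and then leave its inductive step as a sketch of ``local surgery.'' That step is precisely where the proof lives, and the obstruction you yourself identify (the facets of weights $\omega_1,\omega_2$ need not be adjacent in $\mathcal{T}$, let alone form a parallelogram) is not overcome by the surgery you describe: re-triangulating a neighborhood of a seated $\omega_1$-facet so that an $\omega_2$-partner appears will in general disturb the weights of the surrounding facets, and you give no argument that this can be done inside $X$ while ``leaving the remaining facet weights untouched.'' The paper never proves, and never needs, realizability inside $X$.

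The paper's Lemma \ref{lem:dds_map} sidesteps this entirely by exploiting two freedoms you did not use: the target of the intermediate facet maps is allowed to be an abstract \emph{disjoint} collection $\mathcal{C}$ of $d'$-minimal triangles (not a triangulation of $P$ or $Q$), and the triangulations on both sides may be \emph{refined}. Concretely, for a single pseudo-flip the two relevant facets $S_1, S_2$ --- wherever they sit in $P$, adjacent or not --- are carried by $G$-maps $g_1, g_2$ onto a single model parallelogram $\mathcal{P}$; inserting \emph{both} diagonals of $\mathcal{P}$ and pulling back via $(g_1\cup g_2)^{-1}$ refines the triangulation of $S_1\cup S_2$, and composing with the analogous pullback for $S_1'\cup S_2'$ yields a facet map on the refined facets. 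Induction along the pseudo-flip path then accumulates refinements, and only at the end is the final $\mathcal{C}$ identified with the facets of a triangulation of $Q$. So your approach is not merely incomplete; it attempts a strictly stronger statement than the proposition requires, and one the paper's machinery does not establish. To repair your argument you would either have to prove realizability (a nontrivial and possibly false claim) or switch to the refinement construction.
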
  

To prove this, it is helpful to first have the following lemma.

\begin{lemma}
\label{lem:dds_map}
Let $\mathcal{T}_0 = \mathcal{T}_0(P)$ be an initial $d'$-minimal triangulation of $P$ consisting of triangles $\{ S_i \}_{i= 1} ^N$ and let $\tau_0 = \tau_0(P) = \cup _{i = 1} ^N \{ \omega_i \}$ where $\omega_i = W_{d'}(S_i)$. Suppose $\tau \in \mathcal{D}_{d'}(P)$ where

\begin{equation}
\tau = \bigcup _{i = 1} ^N \left\{ \omega_i' \right\}
\end{equation} 
and that $\mathcal{C} = \sqcup _{i = 1} ^N S_i'$ is a disjoint collection of $d'$-minimal triangles satisfying $W_{d'}(S_i') = \omega_i'$. Then we may construct a facet map $\mathcal{F}^{\Delta}: \mathcal{T}_1^{\Delta} \to \mathcal{T}_2^{\Delta}$ for some triangulations $\mathcal{T}_1$ of $P$ and $\mathcal{T}_2$ of $\mathcal{C}$, respectively.
\end{lemma}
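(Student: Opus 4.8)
The plan is to reduce the construction of the facet map to a single geometric statement: that the pseudo-triangulation $\tau$ is \emph{realizable}, meaning there is an honest $d'$-minimal triangulation $\mathcal{T}_1$ of $P$ whose multiset of facet weights equals $\tau$. Granting this, the facet map is immediate. Given such a $\mathcal{T}_1$, its facets carry the weights $\{\omega_i'\}$ with the same multiplicities as the triangles $S_i'$ comprising $\mathcal{C}$; hence there is a weight-preserving bijection from $\mathcal{T}_1^{\Delta}$ to $\{S_i'\} = \mathcal{T}_2^{\Delta}$, where $\mathcal{T}_2$ is the triangulation of $\mathcal{C}$ whose facets are the $S_i'$. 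By Theorem \ref{thm:weight_classification}, any two $d'$-minimal triangles of equal weight are $G$-equivalent, so each matched pair differs by a $G$-map and the bijection is a facet map. The same composition-with-$G$-maps argument shows that the particular realization $\mathcal{C}$ we are handed is irrelevant: it therefore suffices to produce \emph{one} realization of $\tau$ by a triangulation of $P$.

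I would prove realizability by induction on the number $k$ of pseudo-flips needed to pass from $\tau_0(P)$ to $\tau$ in the system $\mathcal{D}_{d'}(P)$ of Definition \ref{def:dds}. When $k = 0$ we have $\tau = \tau_0(P)$, and $\mathcal{T}_1 = \mathcal{T}_0(P)$ realizes it by construction. For the inductive step, write $\tau = SF_{\omega_a \omega_b} \tau'$, where $\tau'$ is realized, by the inductive hypothesis, by a $d'$-minimal triangulation $\mathcal{T}$ of $P$. Since a classical flip alters only the two triangles it acts on, leaving every other facet, and hence every other facet weight, untouched, it suffices to locate in $\mathcal{T}$ two adjacent facets $G_a, G_b$ of weights $\omega_a, \omega_b$ whose union is a parallelogram, and to flip them. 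By Proposition \ref{prop:well_defined} the weights produced by flipping \emph{any} parallelogram with weights $\omega_a, \omega_b$ are exactly the $\omega_a', \omega_b'$ prescribed by the pseudo-flip, so the flipped triangulation realizes $\tau$.

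The main obstacle is precisely producing this adjacent parallelogram inside a realization of $\tau'$: pseudo-flippability of $(\omega_a, \omega_b)$ only guarantees a witnessing parallelogram somewhere in $\mathbb{R}^2$, not one assembled from two facets of the given $\mathcal{T}$. I would attack this using the rigidity of parallelograms. Fix a witnessing parallelogram $R_1 \cup R_2$ with diagonal $\delta$ and weights $\omega_a, \omega_b$, and pick any facet $G_a \in \mathcal{T}$ with $W(G_a) = \omega_a$; by Theorem \ref{thm:weight_classification} some $g \in G$ carries $R_1$ to $G_a$, and it sends $\delta$ to an edge $e := g(\delta)$ of $G_a$ with $\mathbf{W}_{d'}(e) = \mathbf{W}_{d'}(\delta)$. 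The crucial point is that the triangle completing $G_a$ to a parallelogram across $e$ is \emph{forced}: it is the reflection of $G_a$ through the midpoint of $e$, which is exactly $g(R_2)$, and therefore automatically has weight $\omega_b$ by Proposition \ref{prop:weight_invariant}. Thus the pseudo-flip is realizable at $G_a$ as soon as the facet of $\mathcal{T}$ opposite $e$ completes a convex parallelogram rather than a reflex quadrilateral.

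The genuinely delicate remaining task is to show that, after possibly passing to another realization of $\tau'$ connected to $\mathcal{T}$ by flips, such a convex configuration can always be arranged. I would argue this by combining the flip-connectivity of $d'$-minimal triangulations used in Section \ref{subsub:construct} with the fact, isolated above, that the completing triangle across a diagonal-type edge is determined by $G_a$ alone; this reduces the existence of the required local configuration to a finite, weight-bookkeeping check rather than a search over geometry. I expect this convexity-arrangement step to be where essentially all the difficulty of the lemma resides, since everything downstream of it is bookkeeping secured by Proposition \ref{prop:well_defined} and Theorem \ref{thm:weight_classification}.
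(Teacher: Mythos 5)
Your reduction to ``realizability'' --- finding an honest $d'$-minimal triangulation of $P$ whose facet weights equal $\tau$, with each pseudo-flip realized by a classical geometric flip of two adjacent facets --- is a genuinely different and substantially \emph{stronger} claim than the lemma, and the step you yourself flag as delicate is a real gap, not a deferred technicality. Pseudo-flippability of $(\omega_a,\omega_b)$ is a purely weight-theoretic condition (Corollary \ref{cor:pflip_cor}): it guarantees a witnessing parallelogram somewhere in $\mathbb{R}^2$, but gives no control over whether a realization of $\tau'$ inside $P$ contains two \emph{adjacent} facets of those weights whose union is convex. Your proposed fix via flip-connectivity does not work as stated, because classical flips generically change the weight multiset (Proposition \ref{prop:pflip_rule}), so the triangulations of $P$ reachable by flips from $\mathcal{T}$ realize \emph{other} elements of $\mathcal{D}_{d'}(P)$, not other realizations of $\tau'$. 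Note also that the lemma's target $\mathcal{C}$ is a disjoint union of triangles scattered in $\mathbb{R}^2$ rather than a triangulation of a polygon --- a strong hint that realizability within $P$ is not expected to hold and is not what is being claimed.

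The paper's proof sidesteps exactly this obstruction by exploiting the weakness of the facet-map notion: a facet map only acts on \emph{open} facets and need not respect adjacency or extend continuously across edges. In the base case the two facets $S_1, S_2$ of $\mathcal{T}_0$ carrying weights $\omega_1,\omega_2$ need not be adjacent in $P$ at all; they are sent by \emph{separate} $G$-maps $g_1, g_2$ into a single abstract parallelogram $\mathcal{P}$, which is then refined by drawing \emph{both} diagonals. The four resulting sub-facets are common to the two decompositions of $\mathcal{P}$ (before and after exchanging the diagonal), so pulling the refinement back by $(g_1\cup g_2)^{-1}$ and pushing forward by $(g_1'\cup g_2')^{-1}$ yields a facet map $S_1\cup S_2 \to S_1'\cup S_2'$ on refined triangulations, with no convexity or adjacency hypothesis on $\mathcal{T}_0$ ever needed; the inductive step composes such maps after pulling back the new refining segments. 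Your opening observation --- that a weight-preserving bijection of facets is automatically a facet map by Theorem \ref{thm:weight_classification} --- is correct and is also how the paper handles the untouched facets $S_3,\dots,S_N$, but the heart of the lemma is the refinement device, which your proposal is missing.
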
 

\begin{proof}
We proceed by induction on the minimal number $n$ of pseudo-flips that it takes to transform $\tau_0$ into $\tau$. 

\emph{Base Case:} $n = 1$. Since we may relabel the weights and minimal triangles however we please, suppose WLOG that $SF_{\omega_1 \omega_2}\tau_0 = [\tau_0 \backslash \{ \omega_1, \omega_2 \}] \cup \{ \omega_1', \omega_2' \} = \tau$. Then clearly there exists a piecewise $G$-bijection from $\sqcup _{i = 3} ^N S_i$ to $\sqcup _{i = 3} ^N S_i'$ since both unions are disjoint and $\cup _{i = 3} ^N \{ \omega_i \}$ and $\cup _{i = 3} ^n \{\omega_i'\}$ are the same multiset.\footnote{Recall that if two $d'$-minimal triangles have the same weight, then they are $G$-equivalent. See Theorem \ref{thm:weight_classification}.}

Hence, we just need to construct a facet map from $S_1 \cup S_2$ to $S_1' \cup S_2'$. Since $W_{d'}(S_1)$ and $W_{d'}(S_2)$ are pseudo-flippable, there exist $G$-maps $g_1$ and $g_2$ so that $g_1(S_1)$ and $g_2(S_2)$ share a common edge $E_1$ and form a parallelogram $\mathcal{P}$. Construct the remaining diagonal $E_2$ of $\mathcal{P}$ besides the common edge $E_1$. This induces a triangulation $\mathcal{T}_{\mathcal{P}}$ on $\mathcal{P}$. If we say $g_1 \cup g_2$ is the piecewise $G$-map acting as $g_1$ on $S_1$ and as $g_2$ on $S_2$, respectively, then $\mathcal{T}_{S_1 \cup S_2} := (g_1 \cup g_2)^{-1}(\mathcal{T}_{\mathcal{P}})$ is a new triangulation on $S_1$ and $S_2$. Precisely, $\mathcal{T}_{S_1 \cup S_2}$ is the same as the union of the triangles $S_1$ and $S_2$, except we add a line segment bisecting an edge of $S_1$ and a line segment bisecting an edge of $S_2$. See Figure \ref{fig:S1_S2} for an illustration of this procedure.

By the well-definedness of pseudo-flippability (see Proposition \ref{prop:well_defined}), exchanging diagonal $E_1$ for $E_2$ produces two triangles of weight $W_{d'}(S_1')$ and $W_{d'}(S_2')$. Thus, there exist $G$-maps $g_1'$ and $g_2'$ such that $g_1'(S_1')$ and $g_2'(S_2')$ share the common edge $E_2$ and form the parallelogram $\mathcal{P}$. As in the previous passage, we get a triangulation $\mathcal{T}_{S_1' \cup S_2'} := (g_1' \cup g_2')^{-1}(\mathcal{T}_{\mathcal{P}})$ of $S_1' \cup S_2'$.  See Figure \ref{fig:S1'_S2'} for an illustration of this procedure.
\begin{figure}[H]
    \centering
    \includegraphics[keepaspectratio=true, width=17 cm]{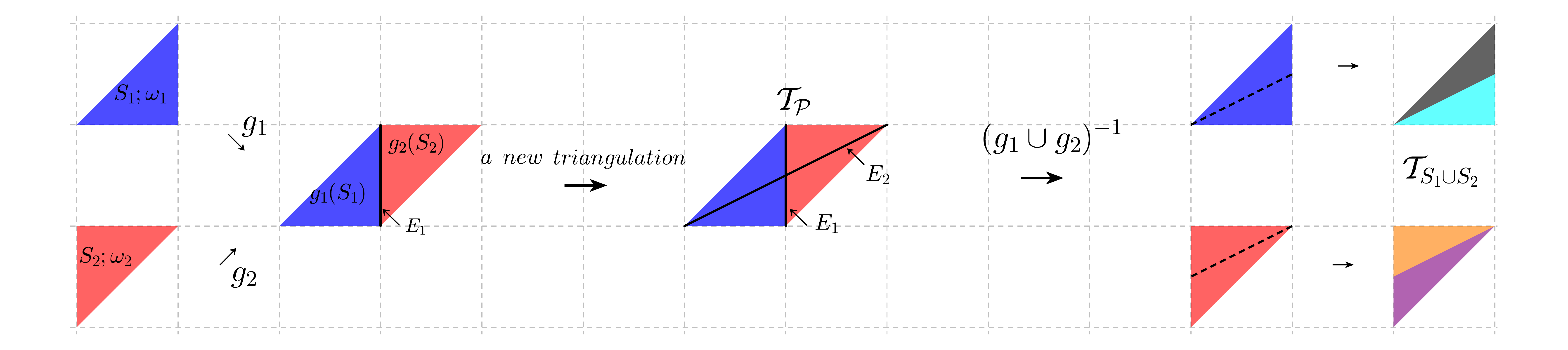}
    \caption{Construct a new triangulation on $S_1\cup S_2$. }
    \label{fig:S1_S2}
\end{figure} 
\begin{figure}[H]
    \centering
    \includegraphics[keepaspectratio=true, width=17 cm]{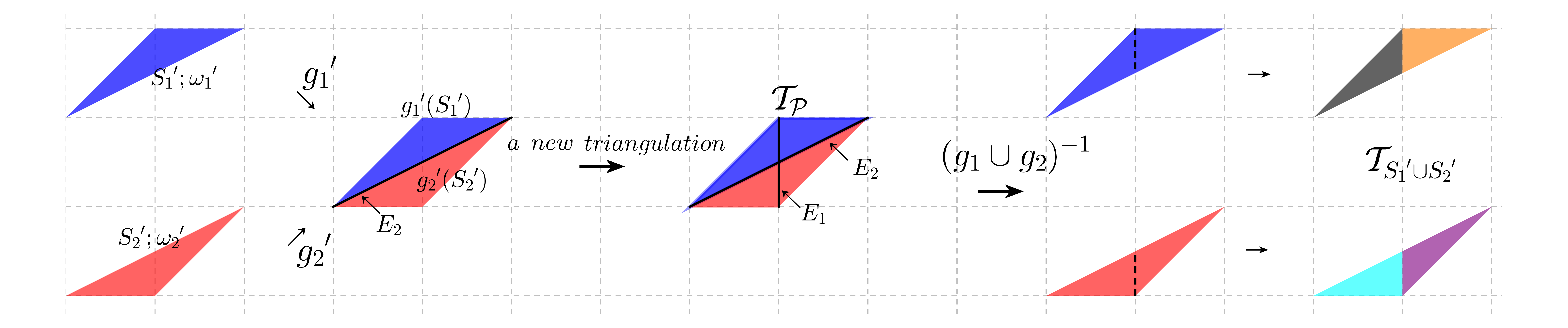}
    \caption{Construct a new triangulation on $S_1'\cup S_2'$. }
    \label{fig:S1'_S2'}
\end{figure} 

Then we have three facet maps in question: $g_1 \cup g_2: \mathcal{T}_{S_1 \cup S_2}^{\Delta} \to \mathcal{T}_{\mathcal{P}}^{\Delta}$, $\mathrm{id}: \mathcal{T}_{\mathcal{P}}^{\Delta} \to \mathcal{T}_{\mathcal{P}}^{\Delta}$, and $(g_1' \cup g_2')^{-1}:  \mathcal{T}_{\mathcal{P}}^{\Delta} \to  \mathcal{T}_{S_1' \cup S_2'}^{\Delta}$. Observe that the composition $(g_1 \cup g_2) \circ (g_1' \cup g_2')^{-1}:  \mathcal{T}_{S_1 \cup S_2}^{\Delta} \to \mathcal{T}_{S_1' \cup S_2'}^{\Delta}$ is a facet map from $S_1 \cup S_2$ to $S_1' \cup S_2'$. Combining this facet map with the piecewise $G$-bijection from $\cup _{i = 3} ^N S_i$ to $\cup _{i = 3} ^N S_i'$, we have a facet map from $P$ to $\mathcal{C}$, and the base case is complete. See Figure \ref{fig:map_btw_S}.\footnote{Note that in this construction, we completely ignore the troubles caused by edges $E_1$ and $E_2$, and only consider how faces are sent to one another. By the inverse map, we get a more refined triangulations on $S_1 \cup S_2$ and $S_1' \cup S_2'$, this enables us to send smaller pieces of faces from $S_1 \cup S_2$ to $S_1' \cup S_2'$ (In Figure \ref{fig:S1_S2},\ref{fig:S1'_S2'} and \ref{fig:map_btw_S}, the smaller pieces are drawn using different color to illustrate). The concept of refinement is the key to construct the facet map.}
\begin{figure}[H]
    \centering
    \includegraphics[keepaspectratio=true, width=17 cm]{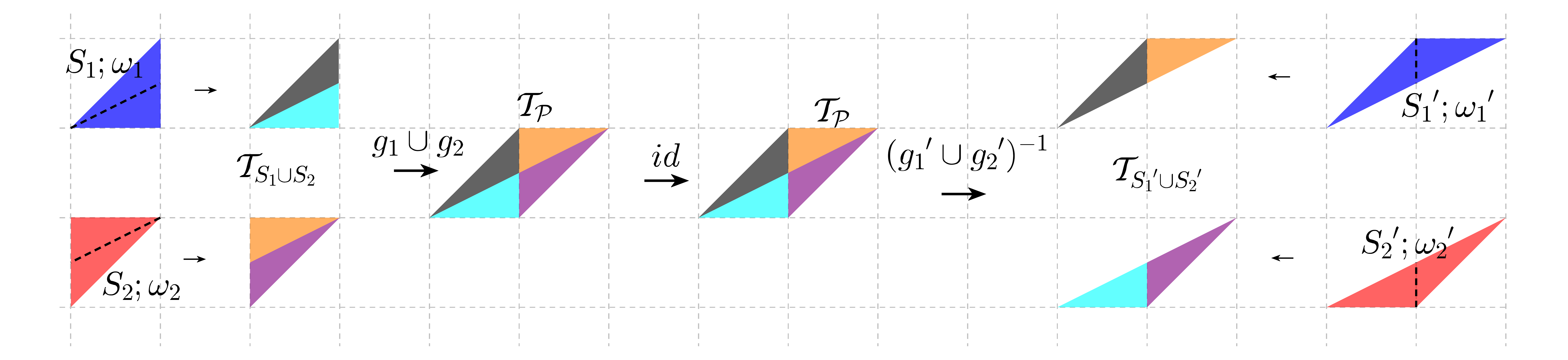}
    \caption{The facet map from $S_1\cup S_2$ to $S_1'\cup S_2'$. }
    \label{fig:map_btw_S}
\end{figure} 

\emph{Inductive step}: $n = k$. Suppose the lemma statement is true for all pseudo-triangulations less than $k$ pseudo-flips away from $\tau_0$, and that there is a path of $k$ pseudo-flips from $\tau_0$ to $\tau$. Suppose that $\tau' = \cup _{i = 1} ^N \{ \omega_i'' \}$ is the pseudo-triangulation immediately preceding $\tau$ on the selected path. 

Construct a disjoint set of $d'$ minimal triangles $S_i''$ with the property $W_{d'}(S_i'') = \omega_i''$. Let $S = \cup _{i = 1} ^N S_i''$ and $\tau' = \cup _{i = 1} ^N \{ \omega_i'' \}$. By our inductive assumption, there exists a facet map $\mathcal{F}_1^{\Delta}: \mathcal{T}_P^{\Delta} \to \mathcal{T}_S^{\Delta}$ for some triangulations $\mathcal{T}_P$ and $\mathcal{T}_S$. 

As in the base case, suppose $SF_{\omega_1'' \omega_2''}\tau' = [\tau' \backslash \{ \omega_1'', \omega_2'' \}] \cup \{ \omega_1', \omega_2' \} = \tau$. Repeating the procedure of the base case, we can construct a facet map $\mathcal{F}_2^{\Delta}$ from $S$ to $\mathcal{C}$ by adding two new line segments $E_1$ and $E_2$ to the triangulation $\mathcal{T}_S$ and applying a piecewise $G$-bijection to this new triangulation $\mathcal{T}_S'$. Formally, we write $\mathcal{F}_2^{\Delta}: \mathcal{T}_S'^{\Delta} \to \mathcal{T}_{\mathcal{C}}^{\Delta}$.

Next, add the pre-image of $E_1$ and $E_2$ under $\mathcal{F}_1^{\Delta}: \mathcal{T}_P^{\Delta} \to \mathcal{T}_S^{\Delta}$ to the triangulation $\mathcal{T}_P$ to get a new triangulation $\mathcal{T}_{P}'$ of $P$. One can then check that $\mathcal{F}_2^{\Delta} \mathcal{F}_1^{\Delta}: \mathcal{T}_P'^{\Delta} \to \mathcal{T}_{\mathcal{C}}^{\Delta}$ is a facet map, as desired.      
  
\end{proof}

\begin{proof}[Proof of Proposition \ref{prop:facet_map_detect}]
First we show the backwards direction. Suppose $\mathcal{F}^{\Delta}: \mathcal{T}_1^{\Delta} \to \mathcal{T}_2^{\Delta}$ is a facet map and that WLOG that $\mathcal{T}_1$ and $\mathcal{T}_2$ are $d'$-minimal triangulations (otherwise, we may refine them to be so). Then because $\mathcal{F}^{\Delta}$ is a piecewise $G$-bijection from $\mathcal{T}_1$ to $\mathcal{T}_2$, the multiset $\tau_0(P)$ of weights of the triangles in $\mathcal{T}_1$ must be the same as $\tau_0(Q)$. Then it is clear from Definition \ref{def:dds} that $\mathcal{D}_{d'}(P) = \mathcal{D}_{d'}(Q)$ --- select the initial triangulation of $P$ to be $\mathcal{T}_{1}$ and the initial triangulation of $\mathcal{C}$ to be $\mathcal{T}_2$ in the construction of $\mathcal{D}_{d'}$.   

For the forward direction, suppose $\mathcal{D}_{d'}(P) = \mathcal{D}_{d'}(Q)$. Select initial triangulation $\mathcal{T}_0$ of $P$ and $\mathcal{T}_0'$ of $Q$. Then $\tau_0(P)$ and $\tau_0(Q)$ are equivalent pseudo-triangulations under some sequence of pseudo-flips. Now apply Lemma \ref{lem:dds_map} with $\mathcal{C} = \mathcal{T}_0^{\Delta}$ and $\tau = \tau_0(Q)$ to get a facet map from $\mathcal{T}_1^{\Delta}$ to $\mathcal{T}_2^{\Delta}$ for some new choice of triangulations $\mathcal{T}_1$ of $P$ and $\mathcal{T}_2$ and $Q$. This concludes the proof of the proposition.  
\end{proof}
\end{subsection}

Finally, combining the results of Propositions \ref{prop:extend} and \ref{prop:facet_map_detect}, we obtain a necessary and sufficient condition for rational discrete equidecomposability. This is our response to Question \ref{que:dehn} posed by Haase--McAllister \cite{haase} in the case of rational discrete equidecomposability. 

\begin{theorem}[criteria for rational discrete equidecomposability]
\label{thm:dehn}
Let $P$ and $Q$ be denominator $d$ polygons. Then there exists a rational discrete equidecomposability relation $\mathcal{F}: (P, \mathcal{T}_1) \to (Q, \mathcal{T}_2)$ for some triangulation $\mathcal{T}_1$ of $P$ and $\mathcal{T}_2$ of $Q$ if and only if all of the following criteria are satisfied. 

\begin{enumerate}
\item $\mathrm{ehr}_P = \mathrm{ehr}_Q$ \emph{[vertex compatibility]}, and
\item $\mathbf{W}_d(P) = \mathbf{W}_d(Q)$ \emph{[edge compatibility]}
\item $\mathcal{D}_{d'}(P) = \mathcal{D}_{d'}(Q)$ for some $d'$ divisible by $d$ \emph{[}$d'$\emph{-facet compatibility]}
\end{enumerate}
\end{theorem}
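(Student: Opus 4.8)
The plan is to deduce Theorem \ref{thm:dehn} directly by chaining together Propositions \ref{prop:extend} and \ref{prop:facet_map_detect}, since between them they already account for all three criteria. The key observation is that conditions (1) [vertex compatibility] and (2) [edge compatibility] are intrinsic properties of the polygons $P$ and $Q$ that make no reference to any particular triangulation, whereas condition (3) [$d'$-facet compatibility] is precisely the hypothesis governing the existence of a facet map via Proposition \ref{prop:facet_map_detect}. Thus the only real work is to thread the existential quantifiers on $d'$ and on the triangulations $\mathcal{T}_1, \mathcal{T}_2$ so that the facet map produced by one proposition is the same object fed into the other.

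For the forward direction, I would begin with a rational discrete equidecomposability relation $\mathcal{F}: (P, \mathcal{T}_1) \to (Q, \mathcal{T}_2)$. As noted in Section \ref{subsec:definitions}, any such $\mathcal{F}$ is automatically a facet map, so the backward direction of Proposition \ref{prop:facet_map_detect} immediately yields $\mathcal{D}_{d'}(P) = \mathcal{D}_{d'}(Q)$ for the $d'$ corresponding to the common denominator of $\mathcal{T}_1$ and $\mathcal{T}_2$, establishing criterion (3). Simultaneously, the forward direction of Proposition \ref{prop:extend}, applied to the relation $\mathcal{F}$ itself, gives both $\mathrm{ehr}_P = \mathrm{ehr}_Q$ and $\mathbf{W}_d(P) = \mathbf{W}_d(Q)$, which are criteria (1) and (2).

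For the backward direction, I would start from the three criteria. Applying the forward direction of Proposition \ref{prop:facet_map_detect} to criterion (3) produces a facet map $\mathcal{F}^{\Delta}: \mathcal{T}_1^{\Delta} \to \mathcal{T}_2^{\Delta}$ for some triangulations $\mathcal{T}_1$ of $P$ and $\mathcal{T}_2$ of $Q$. I would then feed this very facet map, together with criteria (1) and (2), into the backward direction of Proposition \ref{prop:extend}, which extends $\mathcal{F}^{\Delta}$ to a full rational discrete equidecomposability relation $\mathcal{F}: (P, \mathcal{T}_1) \to (Q, \mathcal{T}_2)$ on the same underlying triangulations.

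The only point requiring a moment's care — and the closest thing to an obstacle — is ensuring that the triangulations and the value of $d'$ remain consistent across the two applications. The facet map handed over by Proposition \ref{prop:facet_map_detect} comes equipped with specific triangulations, and because criteria (1) and (2) are triangulation-independent, Proposition \ref{prop:extend} accepts these triangulations without any modification; the extension it produces therefore lives on the triangulations $(\mathcal{T}_1, \mathcal{T}_2)$ already in hand. No refinement or re-selection of $d'$ is needed, so the quantifiers match up cleanly and the proof concludes.
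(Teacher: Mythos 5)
Your proposal is correct and follows exactly the paper's route: the paper proves Theorem \ref{thm:dehn} precisely by combining Proposition \ref{prop:extend} (which handles criteria (1) and (2) given a facet map) with Proposition \ref{prop:facet_map_detect} (which characterizes the existence of a facet map via criterion (3)). Your additional care about matching the triangulations and the value of $d'$ across the two applications is a reasonable elaboration of what the paper leaves implicit, and it goes through because criteria (1) and (2) are triangulation-independent.
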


\end{section}

\begin{section}{Computational Considerations and Algorithm for Equidecomposability}
\label{sec:algorithm}

The purpose of this section is to provide relatively concrete algorithms for detecting and constructing rational equidecomposability relations between denominator $d$ polygons $P$ and $Q$. The goal here is to demonstrate how our methods are constructive as well as to bring about some potentially interesting computational problems. To do so, we will analyze each component of Theorem \ref{thm:dehn} separately. 

\begin{subsection}{Detecting Facet Compatiblity and Mapping}
\label{subsec:facet_compat}

We begin by studying the weights of triangles produced by flips and pseudo-flips. This captures the perspective we take in realizing Section \ref{subsec:dehn2} more explicitly for the purposes of computation. It is again helpful to keep in mind Theorem \ref{thm:weight_classification}, which states that $d$-minimal triangles are $G$-equivalent if and only if they have the same weight. 

\begin{proposition}
\label{prop:pflip_rule}
Let $S$ and $T$ be $d$-minimal triangles sharing a common edge and forming a parallelogram $\mathcal{P}$. Suppose $W(S) = \{u, v, w\}$, $W(T) = \{x,y,z\}$\footnote{Recall from Definition \ref{def:weight_triangle} that these weights are computed by orienting each triangle counterclockwise.}, and $T$ and $S$ meet at the edge of $S$ with weight $u$ and the edge of $T$ with weight $x$. Let the edges with weight $v$ and $w$ be opposite to the edges with weight $y$ and $z$, respectively. Then,

\begin{enumerate}
\item $u \equiv -x \mod d$, 
\item $v + y \equiv 1 \mod d$, and
\item $w + z \equiv 1 \mod d$.
\end{enumerate}  

Moreover, when we exchange the diagonal of $\mathcal{P}$, we obtain two new minimal triangles $S'$ and $T'$ with weights $\{v, -w + 1, w - v\}$ and $\{w, -v + 1, v - w \}$, respectively, that meet at the edges labeled $\pm (v - w)$, and $v$ and $-w + 1$ are opposite to $-v + 1$ and $w$, respectively.  
\end{proposition}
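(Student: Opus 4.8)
The plan is to use the $G$-invariance of the edge weight $W$ (Proposition \ref{prop:weight_inv}) to transport the parallelogram $\mathcal{P} = S \cup T$ to a single normalized position, thereby reducing all five assertions to one determinant evaluation. First I would observe that, since $S$ and $T$ are $d$-minimal and share their diagonal, $\mathcal{P} \cap \mathcal{L}_d$ is exactly the four vertices of $\mathcal{P}$ and $\mathrm{area}(\mathcal{P}) = 2 \cdot \tfrac{1}{2d^2} = \tfrac{1}{d^2}$; hence $d\mathcal{P}$ is a fundamental parallelogram of $\mathbb{Z}^2$ (area $1$, no lattice points besides its vertices). Its two edge vectors therefore form a basis of $\mathbb{Z}^2$, so --- choosing the labeling so that both emanate from one endpoint of the shared diagonal, with positive orientation --- there is an orientation-preserving $g \in G$ carrying $\mathcal{P}$ to the standard square with vertices $A = (\tfrac{a}{d}, \tfrac{b}{d})$, $B = (\tfrac{a+1}{d}, \tfrac{b}{d})$, $C = (\tfrac{a+1}{d}, \tfrac{b+1}{d})$, $D = (\tfrac{a}{d}, \tfrac{b+1}{d})$ for some integers $a,b$, with the shared edge becoming the diagonal $AC$. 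Because $g$ is orientation preserving, Proposition \ref{prop:weight_inv} preserves every edge weight, so it suffices to verify the proposition in this normalized configuration.

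Next I would orient $S = ABC$ and $T = ACD$ counterclockwise (both are, as one checks from the signed areas). The common diagonal is then traversed as $C \to A$ in $S$ and $A \to C$ in $T$; since reversing an edge negates its weight, $u = W(C \to A) = -W(A \to C) = -x$, which is (1). For (2) and (3) I would note that opposite sides of $\mathcal{P}$ are related by the translation across the parallelogram, and expanding the determinant in Definition \ref{def:weight} shows that the (oppositely oriented) weights of an opposite pair sum to $1$, because the two edge directions form a unimodular basis. Concretely one computes $W(A \to B) = -b$, $W(B \to C) = a+1$, $W(C \to A) = b-a$ for $S$ and $W(A \to C) = a-b$, $W(C \to D) = b+1$, $W(D \to A) = -a$ for $T$; reading these off with $v = W(A \to B)$, $w = W(B \to C)$, $y = W(C \to D)$, $z = W(D \to A)$ gives $v + y \equiv 1$ and $w + z \equiv 1 \mod d$, and confirms that the non-shared edges of $S$ of weights $v,w$ are the parallelogram-opposites of the edges of $T$ of weights $y,z$.

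Finally I would carry out the flip by re-splitting $\mathcal{P}$ along the other diagonal $BD$ into $S' = ABD$ and $T' = BCD$, orient both counterclockwise, and recompute. The only new quantity is $W(B \to D) = a+b+1$; collecting the three edges of each triangle yields $W(S') = \{-b, -a, a+b+1\} = \{v, -w+1, w-v\}$ and $W(T') = \{a+1, b+1, -a-b-1\} = \{w, -v+1, v-w\}$, with $S'$ and $T'$ meeting along $BD$ at the edges of weights $w-v$ and $v-w = -(w-v)$, and with the edges of weights $v$ and $-w+1$ opposite to those of weights $-v+1$ and $w$, exactly as claimed. Since this output is expressed entirely through $v$ and $w$, the computation simultaneously delivers the independence of the flip from the chosen parallelogram promised in footnote \ref{foot:flip} (Proposition \ref{prop:well_defined}).

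I expect the main obstacle to be bookkeeping rather than arithmetic: one must check that the normalization can always be taken orientation preserving (so that no spurious signs enter through Proposition \ref{prop:weight_inv}), and must pin down the dictionary between the abstract labels $u,v,w,x,y,z$ and the concrete edges $AB, BC, CD, DA, AC, BD$ --- in particular that the pairing $v \leftrightarrow y$, $w \leftrightarrow z$ of opposite edges is the one forced by the hypotheses. Once this correspondence is fixed, each of the five claims is a one-line consequence of the determinant formula.
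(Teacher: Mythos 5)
Your proof is correct, but it takes a different route from the paper's for everything past claim (1). The paper argues in place on an arbitrary minimal parallelogram: (1) comes from reversing the orientation of the shared edge (as in your argument), (2) and (3) come from the lattice-distance machinery of \cite{turnerwu} (opposite sides of a minimal parallelogram have lattice distances differing by $1$, whence the weights sum to $1 \bmod d$), and the flip formula is then deduced formally from (1)--(3) together with the fact that the weights of a counterclockwise minimal triangle sum to $1$. You instead normalize: since $d\mathcal{P}$ is a lattice parallelogram of area $1$, its edge vectors are a unimodular basis, so an orientation-preserving $G$-map carries $\mathcal{P}$ to a unit cell of $\mathcal{L}_d$, and Proposition \ref{prop:weight_inv} reduces all five claims to one explicit determinant computation (your values $-b$, $a+1$, $b-a$, etc., are all correct, and the orientation/labelling bookkeeping works out as you say). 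What your approach buys is self-containment --- you need only Definition \ref{def:weight} and Proposition \ref{prop:weight_inv}, not Propositions 2.13 and 2.15 of \cite{turnerwu} --- at the cost of a normalization step the paper avoids. One caveat: your closing remark that the computation ``simultaneously delivers'' Proposition \ref{prop:well_defined} overreaches. Your calculation shows the flip output depends only on $v,w$ \emph{once the combinatorial pairing of edges is fixed} (which edge of $S$ meets which edge of $T$, and which opposite pairs are identified); the actual content of Proposition \ref{prop:well_defined}, and the reason for its case analysis, is that two triangles of weights $\omega_1,\omega_2$ might form a parallelogram by meeting along a \emph{different} pair of edges, and one must check those configurations are impossible or give the same answer. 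Your argument does not address that, though it is not needed for the proposition at hand, whose hypotheses fix the pairing.
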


\begin{figure}[H]
    \centering
    \includegraphics[keepaspectratio=true, width=13cm]{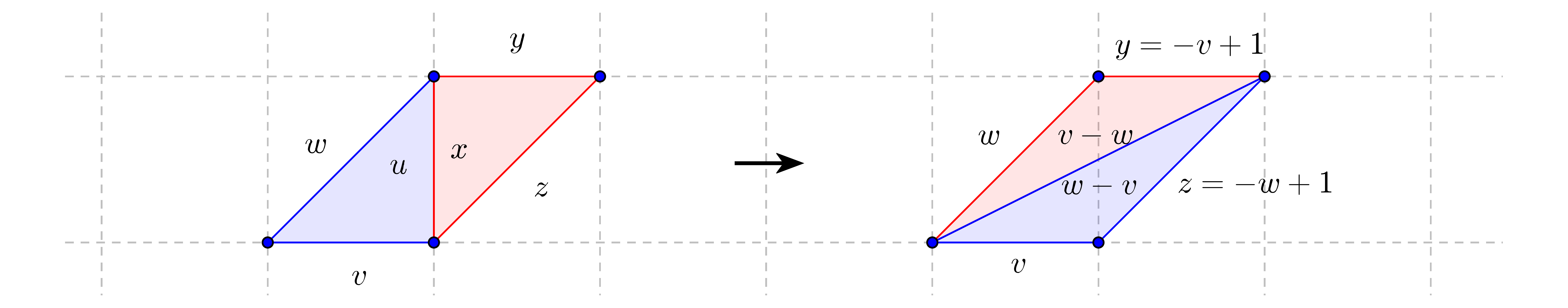}
    \caption{\small $S$ and $T$ before and after the flip. }
    \label{fig:flip_two_tri}
\end{figure}

\begin{proof}
We see that $u$ and $x$ must differ by a sign because they are the weight of the common edge between $S_1$ and $S_2$ when oriented in two different directions. 

Let $B$ be the edge of weight $v$ and $E$ the edge of weight $y$. By the geometry of the figure, opposite edges of $\mathcal{P}$ will be oriented in opposite directions. Moreover, the edge whose affine span (the line extending a given edge) lies further away from the origin will be oriented counterclockwise\footnote{See Definition 2.12 \cite{turnerwu} for precisely what we mean by \textit{counterclockwise oriented edge}.}. Suppose WLOG that the extended line of $B$ lies further from origin, \emph{i.e} $B$ is oriented counterclockwise and $E$ is oriented clockwise. Since $\mathcal{P}$ is a minimal parallelogram (in the sense that the only points at which it intersects $\mathcal{L}_d$ are its vertices), we observe using the definition of lattice distance (See Equation 2 in Section 2 of \cite{turnerwu}.) that $\mathrm{dis}(E) - \mathrm{dis}(B) = 1$. By the specified orientations of $B$ and $E$ and Proposition 2.13 from \cite{turnerwu}, this implies $y + v \equiv 1 \mod d$. The same procedure confirms property (3) as well. 

The final statement is a consequence of properties (1),(2), and (3) as well as the fact that the sum of the weights of a counterclockwise oriented $d$ minimal triangle is $1$ (See Proposition 2.15 from \cite{turnerwu} and Figure \ref{fig:flip_two_tri}). 
\end{proof}

Proposition \ref{prop:pflip_rule} comes with the following useful corollary.  

\begin{corollary}
\label{cor:pflip_cor}
The weights $\omega_1 = \{u', v', w'\}$ and $\omega_2 = \{x', y', z'\}$ are pseudo-flippable if and only if there exist bijections $\phi_1:\{u,v,w\} \to \{u',v',w'\}$ and $\phi_2:\{x,y,z\} \to \{x',y',z'\}$ such that 

\begin{enumerate}
\item $\phi_1(u) \equiv -\phi_2(x) \mod d$
\item $\phi_1(v) + \phi_2(y) \equiv 1 \mod d$, and
\item $\phi_1(w) + \phi_2(z) \equiv 1 \mod d$. 
\end{enumerate}
\end{corollary}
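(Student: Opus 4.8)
Corollary~\ref{cor:pflip_cor} asserts that two weight classes $\omega_1 = \{u',v',w'\}$ and $\omega_2 = \{x',y',z'\}$ are pseudo-flippable exactly when representatives can be matched up (via bijections $\phi_1,\phi_2$ of the underlying three-element multisets) so that the three congruences (1)--(3) of Proposition~\ref{prop:pflip_rule} hold. The plan is to deduce this directly from Proposition~\ref{prop:pflip_rule} together with the weight-classification result Theorem~\ref{thm:weight_classification}, which lets us translate freely between weights and $G$-equivalence classes of $d$-minimal triangles.

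**Forward direction.**
First I would assume $\omega_1$ and $\omega_2$ are pseudo-flippable. By Definition~\ref{def:pseudoflip}, there exist adjacent $d$-minimal triangles $S$ and $T$ forming a parallelogram, with $W(S)=\omega_1$ and $W(T)=\omega_2$. Proposition~\ref{prop:pflip_rule} applies to this configuration: labelling the shared edges as $u$ (in $S$) and $x$ (in $T$), and the remaining edges $v,w$ and $y,z$ with the specified opposite-edge pairing, we obtain congruences (1)--(3) of that proposition. Since $W(S)=\{u,v,w\}$ is the same multiset as $\omega_1=\{u',v',w'\}$, there is a bijection $\phi_1$ carrying the labels $u,v,w$ to the elements $u',v',w'$ of $\omega_1$; likewise a bijection $\phi_2$ for $T$. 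Rewriting Proposition~\ref{prop:pflip_rule}'s congruences through these relabelings yields precisely conditions (1)--(3) of the corollary.

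**Backward direction.**
Conversely, suppose such $\phi_1,\phi_2$ exist. Here I would \emph{construct} an explicit pseudo-flippable pair realizing $\omega_1,\omega_2$. Using $\phi_1$, relabel the elements of $\omega_1$ as $u,v,w$ (so that $u$ is the image of the shared-edge slot, etc.), and similarly relabel $\omega_2$ as $x,y,z$ via $\phi_2$; then (1)--(3) hold as stated in Proposition~\ref{prop:pflip_rule}. The task is to exhibit a $d$-minimal triangle $S$ with $W(S)=\{u,v,w\}$ and a $d$-minimal triangle $T$ with $W(T)=\{x,y,z\}$ that share the $u$/$x$ edge and form a parallelogram. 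The cleanest route is to build the parallelogram first: take a $d$-minimal parallelogram $\mathcal{P}$ and place its diagonal so that one triangle has the shared edge carrying weight $u$. By the argument already used in Proposition~\ref{prop:pflip_rule} (the lattice-distance computation forcing $v+y\equiv 1$ and $w+z\equiv 1$, together with the opposite-sign relation $u\equiv -x$), the congruences (1)--(3) are exactly the compatibility conditions that allow the second triangle $T$ across the diagonal to have weight multiset $\{x,y,z\}$. Theorem~\ref{thm:weight_classification} then guarantees that any $d$-minimal triangle with the prescribed weight multiset is $G$-equivalent to the one sitting in $\mathcal{P}$, so applying the appropriate $G$-map lets us assume $S$ and $T$ are the actual triangles of weight $\omega_1,\omega_2$, whence they are pseudo-flippable.

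**Main obstacle.**
The forward direction is essentially a bookkeeping translation of Proposition~\ref{prop:pflip_rule}, so the real work is the backward direction: given only the three congruences, one must \emph{realize} a genuine pair of adjacent $d$-minimal triangles, not merely weights satisfying the equations. The subtlety is that a triangle's three edge-weights are not independent---they sum to $1 \bmod d$ for a counterclockwise $d$-minimal triangle (Proposition 2.15 of \cite{turnerwu})---so I must check that the three congruences are consistent with this constraint on \emph{both} triangles simultaneously, and that the two triangles can actually be glued into a single minimal parallelogram rather than merely coexisting abstractly. I expect this realizability step to be the crux; it should follow by running the geometric lattice-distance analysis of Proposition~\ref{prop:pflip_rule} in reverse and invoking Theorem~\ref{thm:weight_classification} to upgrade weight-level equality to an honest $G$-equivalence of configurations.
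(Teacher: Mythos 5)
Your proposal is correct and follows essentially the same route as the paper: the forward direction is the same relabeling of Proposition \ref{prop:pflip_rule} applied to a witnessing parallelogram, and the backward direction likewise realizes a concrete minimal parallelogram with one triangle of weight $\omega_1$, uses the congruences of Proposition \ref{prop:pflip_rule} to force the opposite triangle's weight to be $\omega_2$, and invokes Theorem \ref{thm:weight_classification}. The only difference is that the paper makes the realizability step you flag as the crux fully explicit, by $G$-mapping a triangle of weight $\omega_1$ to a translate of $T_1 = \mathrm{Conv}\left((0,0),(\tfrac{1}{d},0),(0,\tfrac{1}{d})\right)$ in the unit square and attaching a $d$-minimal right triangle along the edge labeled $\phi(u)$.
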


\begin{proof}
Suppose $\omega_1$ and $\omega_2$ are pseudo-flippable. Then there are triangles $S_1$ and $S_2$ with weights $\omega_1$ and $\omega_2$, respectively, forming a parallelogram $\mathcal{P}$. The weights of the edges of $\mathcal{P}$ and the diagonal along which $S_1$ and $S_2$ meet must satisfy the properties of Proposition \ref{prop:pflip_rule}. Hence, there is a labeling of these edges that will satisfy the properties of Proposition \ref{prop:pflip_rule}. This completes the forward direction.

For the backward direction, $G$-map $S_1$ to a translate $S_1'$ of $T_1 = \mathrm{Conv} \left((0,0), (\frac{1}{d}, 0), (0, \frac{1}{d}) \right)$ that lies in the unit square (Do so using Proposition 2.5 from \cite{turnerwu}). Consider a $d$-minimal right triangle $S_2'$ that borders the edge labeled $\phi(u)$ of $S_1'$.

\vspace{-.8 cm}

\begin{figure}[H]
    \centering
    \includegraphics[keepaspectratio=true, width=6cm]{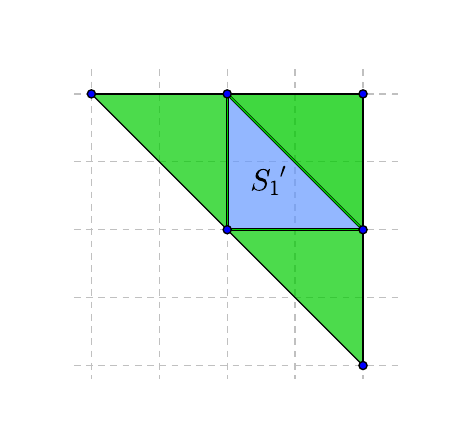}
    \caption{\small $S_1'$ is a translation of $T_1 = \mathrm{Conv} \left((0,0), (\frac{1}{d}, 0), (0, \frac{1}{d}) \right)$. The green triangles are potential choices of $S_2'$, the right triangle that borders $S_1'$ at a side of weight $\phi(u)$. Note that $S_1'$ and $S_2'$ form a minimal parallelogram.}
    \label{fig:S2prime}
\end{figure}

Apply Proposition \ref{prop:pflip_rule} and our assumption about the weight of $S_2$ to see that $W(S_1) = W(S_2)$. Therefore, $S_2$ can be mapped to $S_2'$ using Theorem \ref{thm:weight_classification}. By definition, $\omega_1$ and $\omega_2$ are pseudo-flippable.

\end{proof}

Thus, it is easy to check using Corollary \ref{cor:pflip_cor} that a given weight class is pseudo-flippable with at most three other weight classes. 

Now we can tie up a loose end: namely the content of Footnote \ref{foot:flip} from Section \ref{subsec:dehn2} stating that pseudo-flipping is a well-defined operation. 

\begin{proposition}[well-definedness of pseudo-flips]
\label{prop:well_defined}
Pseudo-flipping is well-defined. That is, given any two flippable (in the classical sense) pairs of $d$-minimal triangles $(S_1, S_2)$ and $(S_1', S_2')$ satisfying $W(S_1) = W(S_1') = \omega_1$ and $W(S_2) = W(S_2') = \omega_2$, exchanging the diagonals of the parallelograms $\mathcal{P}$ and $\mathcal{P'}$ from the common edge shared by $(S_1, S_2)$ and $(S_1', S_2')$, respectively, results in minimal triangles $(S_3, S_4)$ and $(S_3', S_4')$, respectively, satisfying $W(S_3) = W(S_3') = \omega_3$ and $W(S_4) = W(S_4') = \omega_4$. 
\end{proposition}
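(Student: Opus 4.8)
The plan is to reduce the well-definedness of pseudo-flipping entirely to the explicit weight-computation rule already established in Proposition \ref{prop:pflip_rule}. The key observation is that Proposition \ref{prop:pflip_rule} does not merely assert that \emph{some} pair of output weights exists; it computes the output weights $\omega_3$ and $\omega_4$ as explicit functions of the labeled input data. Specifically, once we record for the parallelogram $\mathcal{P}$ which edge of $S_1$ (say of weight $u$) is the shared diagonal and which edges (of weights $v$ and $w$) are the remaining two, the proposition tells us that flipping produces triangles of weight $\{v, -w+1, w-v\}$ and $\{w, -v+1, v-w\}$. So if I can show that the labeled data extracted from $(S_1,S_2)$ and from $(S_1',S_2')$ agree up to the symmetries that Proposition \ref{prop:pflip_rule} respects, then the output weights $\omega_3,\omega_4$ must coincide by direct substitution into these formulas.

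First I would fix the two configurations $(S_1,S_2)$ forming $\mathcal{P}$ and $(S_1',S_2')$ forming $\mathcal{P}'$, with $W(S_1)=W(S_1')=\omega_1$ and $W(S_2)=W(S_2')=\omega_2$. By Theorem \ref{thm:weight_classification} there are $G$-maps carrying $S_1$ to $S_1'$ and $S_2$ to $S_2'$, since equal weight classes are $G$-equivalent. The first substantive step is to control the shared diagonal: in each parallelogram the diagonal is the edge along which the two triangles meet, so its weight in $S_1$ is $u$ and in $S_2$ is $x$, with $u\equiv -x \bmod d$ by part (1) of Proposition \ref{prop:pflip_rule}, and likewise $u'\equiv -x'$ for the primed configuration. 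The point to verify is that $u$ and $u'$ represent the same choice of shared edge within the \emph{weight multiset} $\omega_1$, and similarly $x,x'$ within $\omega_2$. Because both $(S_1,S_2)$ and $(S_1',S_2')$ are genuine minimal parallelograms, the pairing conditions (1)--(3) of Proposition \ref{prop:pflip_rule} force the shared-edge weights $u,x$ (resp. $u',x'$) to satisfy $u\equiv -x$ together with the two pairing relations on the remaining edges; these relations pin down the labeling up to the swap $v\leftrightarrow w$ (equivalently $y\leftrightarrow z$), which is exactly the symmetry built into the unordered weight multiset.

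The second step is then purely algebraic: having matched the labeled data, I substitute into the explicit output formulas $\{v,-w+1,w-v\}$ and $\{w,-v+1,v-w\}$. Under the residual $v\leftrightarrow w$ symmetry these two multisets are simply interchanged, so the \emph{unordered pair} of output weight classes $\{\omega_3,\omega_4\}$ is invariant. Hence the primed configuration yields the same $\{\omega_3,\omega_4\}$, which is precisely the well-definedness claim. A cleaner way to package the whole argument is to note that after applying the $G$-map sending $S_1\mapsto S_1'$, the image of $S_2$ and the triangle $S_2'$ are two $d$-minimal triangles of equal weight $\omega_2$ both bordering the fixed triangle $S_1'$ along its edge of weight $x'$ and completing it to a minimal parallelogram; by the uniqueness-type reasoning in the backward direction of Corollary \ref{cor:pflip_cor} (the right-triangle normal form of Figure \ref{fig:S2prime}), these two borderings agree up to a $G$-map fixing $S_1'$, and $G$-maps preserve all weights by Proposition \ref{prop:weight_invariant}.

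The main obstacle I anticipate is the bookkeeping in the first step: making rigorous the claim that the shared diagonal is ``the same edge'' in both configurations. The subtlety is that a weight class $\omega_1$ is an unordered multiset, so a priori the shared edge could correspond to different entries of $\omega_1$ in the two parallelograms, and one must rule out that an accidental coincidence of numerical weight values lets the two configurations flip along genuinely different edges and produce different outputs. I would resolve this by observing that the constraints (1)--(3) of Proposition \ref{prop:pflip_rule} must hold simultaneously for the chosen shared edge, and that these constraints, together with $G$-equivalence of $S_1$ with $S_1'$ and of $S_2$ with $S_2'$, force the flip to be performed along $G$-equivalent diagonals; the only genuine freedom left is the $v\leftrightarrow w$ relabeling, under which the output is symmetric as shown above. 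Everything else is a routine substitution into the formulas of Proposition \ref{prop:pflip_rule}.
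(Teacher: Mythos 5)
There is a genuine gap, and it sits exactly where you yourself locate ``the main obstacle.'' You correctly observe that Proposition \ref{prop:pflip_rule} gives explicit output formulas $\{v,-w+1,w-v\}$ and $\{w,-v+1,v-w\}$ once the shared diagonal and the opposite-edge pairing are fixed, and that the only issue is whether two parallelograms built from the same pair of weight classes could realize \emph{combinatorially different} pairings. But you then dispose of this by asserting that conditions (1)--(3) of Proposition \ref{prop:pflip_rule} ``pin down the labeling up to the swap $v\leftrightarrow w$'' and ``force the flip to be performed along $G$-equivalent diagonals.'' That assertion is precisely the nontrivial content of the proposition, and it is false as a formal statement about the constraints alone: because $\omega_1$ and $\omega_2$ are unordered multisets of residues mod $d$, accidental congruences (e.g.\ $w\equiv v-1$, or $-u\equiv -w+1$) can make an alternative pairing of edges satisfy the numerical conditions (1)--(3), so one must check each such degenerate pairing individually and show that it either cannot actually be realized by a minimal parallelogram or produces the same output pair $\{\omega_3,\omega_4\}$. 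The paper's proof consists almost entirely of this check: it enumerates all nine ways the six edges can be paired as a shared diagonal and rules each alternative out (e.g.\ one subcase forces $2w+(-2w)\equiv 1 \bmod d$, impossible) or shows it is combinatorially identical to the original configuration. Your proposal contains no substitute for this case analysis.

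Your ``cleaner packaging'' does not close the gap either. After applying a $G$-map carrying $S_1$ to $S_1'$, the images $g(S_2)$ and $S_2'$ are indeed two weight-$\omega_2$ triangles each completing $S_1'$ to a minimal parallelogram, but nothing yet guarantees they attach along the \emph{same} edge of $S_1'$ (again because distinct edges of $S_1'$ may carry weights that both admit a legal attachment). The backward direction of Corollary \ref{cor:pflip_cor} only constructs one such attachment in a normal form; it proves existence, not uniqueness of the bordering configuration, so it cannot be cited for the uniqueness you need. To repair the argument you would have to carry out essentially the same exhaustive analysis of possible diagonal pairings that the paper does, or find a genuinely new invariant that distinguishes the attachment edge intrinsically.
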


\begin{proof}
This proposition is trivial when $d = 1$, for then all $d$-minimal triangles are $G$-equivalent. Thus suppose $d > 1$. 

Using Proposition \ref{prop:pflip_rule}, write WLOG $W(S_1) = \omega_1 = \{u, v, w\}$ and $W(S_2) = \omega_2 = \{-u, -v + 1, -w - 1 \}$. That is, $S_1$ and $S_2$ border along the edge of weight $\pm u$, and the edge of weight $v$ ($w$, respectively) is opposite to the edge of weight $-v+1$ ($-w + 1$, respectively). It is a consequence of Proposition \ref{prop:pflip_rule} that for any parallelogram $\mathcal{P}'$ formed by triangles of weight $\omega_1$ and $\omega_2$ that share a common edge of weight $\pm u$, the new weights produced from exchanging diagonals will be $\omega_3 = \{v, -w + 1, w - v\}$ and $\omega_4 = \{w, -v + 1, v - w \}$.

Therefore, the only way well-definedness can break is if some $d$-minimal triangles $S_1'$ and $S_2'$ can meet along edges other than the edge of weight $\pm u$ and still form a parallelogram. Observe that the edge labeled $v$ ($w$, respectively) cannot meet the edge of weight $- v + 1$ ($-w + 1$, respectively) because $v \neq v - 1 \mod d$ ($w \neq w - 1 \mod d$, respectively). This takes care of our first two cases immediately. 

Recall using Proposition 2.15 from \cite{turnerwu} that $u = -w - v + 1 \mod d$. We have six remaining cases to consider.

\textbf{3. The edge} $\mathbf{w}$ \textbf{meets the edge} $\mathbf{-v + 1}$\textbf{, and the triangles form a parallelogram:} It is helpful to follow this argument along with Figure \ref{fig:flip3}. The assumption of this case implies $w \equiv v - 1 \mod d$. Therefore, $W(S_1') = \{u, v, w\} = \{-2w, w + 1, w \}$ and $W(S_2') = \{-u, -v + 1, -w + 1\} = \{2w, -w, -w + 1 \}$. Then we have two cases two consider: the first being that the edges of $S_1'$ labeled $-2w$ and $w + 1$ are opposite to the edges labeled $2w$ and $-w + 1$, respectively. If these form a parallelogram, then opposite labels would have to add up to $1 \mod d$, but since $2w + (-2w) \equiv 0 \mod d$, this is not the case.

\begin{figure}[ht]
    \centering
    \includegraphics[keepaspectratio=true, width=14cm]{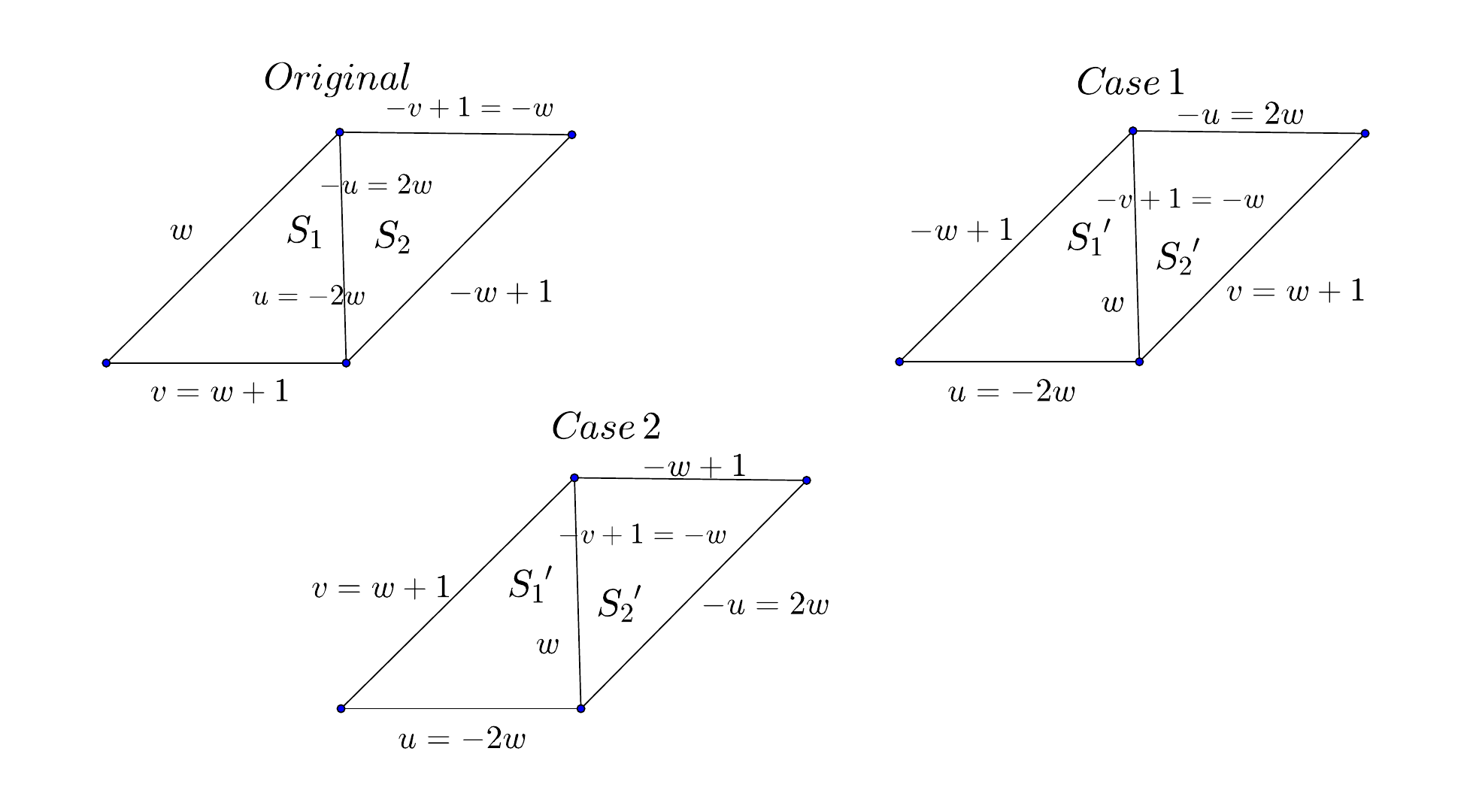}
    \caption{\small The figure depicts the original case how $S_1$ and $S_2$ meet. Under the assumption of \textbf{3.} we have two cases illustrating the possible ways that triangles $S_1'$ and $S_2'$ could meet. The first case turns out to be impossible and the second is redundant.}
    \label{fig:flip3}
\end{figure}

In the other case, we have the edges labeled $-2w$ and $w +1$ of $S_1'$ are opposite to edges $-w + 1$ and $2w$, respectively, of $S_2'$. If this is a parallelogram, then $2w + w + 1 \equiv 3w + 1 \equiv 1 \mod d$, which implies $2w \equiv -w \mod d$. Therefore, $W(S_1') = \{-2w, w, w +1 \}$ and $W(S_2') = \{2w, -w + 1, -w\}$, where the triangles $S_1'$ and $S_2'$ meet at the edge labeled $\pm 2w$, and the edges labeled $w$ and $w + 1$ are opposite to the edges labeled $-w + 1$ and $w$, respectively. But this is the same combinatorial set-up as the way triangles $S_1$ and $S_2$ meet. Thus, the flips of $(S_1, S_2)$ and $(S_1', S_2')$ agree. 

\textbf{4. The edge labeled} $\mathbf{v}$ \textbf{meets the edge labeled} $\mathbf{-w + 1}$\textbf{:} This case is subsumed by Case 3 by symmetry. 

\textbf{5. The edge labeled} $\mathbf{u}$ \textbf{meets the edge labeled} $\mathbf{-w + 1}$\textbf{:} In this case $-u \equiv -w + 1$, so that $W(S_2) = \{-u, -v + 1, -w + 1 \} = \{-w + 1, -v + 1,  -w + 1,\}$. Thus the only potentially combinatorially different possibility from the original pairing of $S_1$ and $S_2$ is if $S_1'$ and $S_2'$ meet along an labeled $-w + 1$, and the edges $w$ and $v$ of $S_1'$ are opposite to the edges $-v + 1$ and $-w + 1$ of $S_2$'. By the rule for minimal parallelograms, $w - v + 1 \equiv 1 \mod d$, which implies $w \equiv v \mod d$. But it that case, $W(S_2') = \{-w + 1, -w + 1, -w + 1 \}$ --- all of the edges of $S_2'$ have the same weight. Thus, the specified pairing of $S_1'$ and $S_2'$ is no different combinatorially from the original pairing of $S_1$ and $S_2$. That concludes the argument for Case 5. 

\textbf{6. The edge labeled} $\mathbf{u}$ \textbf{meets the edge labeled} $\mathbf{-v + 1}$\textbf{:} This case is subsumed by Case 5 by symmetry. 

\textbf{7. The edge labeled} $\mathbf{-u}$ \textbf{meets the edge labeled} $\mathbf{v}$\textbf{:} This case is subsumed by Case 5 by symmetry.

\textbf{8. The edge labeled} $\mathbf{-u}$ \textbf{meets the edge labeled} $\mathbf{w}$\textbf{:} This case is subsumed by Case 5 by symmetry.

Since edges can be paired up in $9$ ways (including the original pairing as $S_1$ and $S_2$ meet), this handles all necessary cases. Pseudo-flipping is a well-defined operation. 
  
\end{proof}

Let $P$ be denominator $d$ polygons and fix $d'$ divisible by $d$. Then Corollary \ref{cor:pflip_cor} and Proposition \ref{prop:well_defined} allows us to construct an algorithm for computing $\mathcal{D}_{d'}(P)$ and hence detect facet maps between denominator $d$ polygons $P$ and $Q$.

\begin{algorithm}[Computing $\mathcal{D}_{d'}$]
\label{alg:facet_map}
\normalfont
Let $P$ be a denominator $d$ polygon. The following brute-force method allows us to compute $\mathcal{D}_{d'}(P)$. \\
 
\begin{enumerate}
\item Find an initial $d'$-minimal triangulation $\mathcal{T}_0 = \mathcal{T}_0(P)$.
\item Use Definition \ref{def:weight_triangle} to compute the initial psuedo-triangulation $\tau = \tau_0(P)$.
\item Use Corollary \ref{cor:pflip_cor} to determine the set $S$ of pseudo-flippable pairs in $\tau$.
\item Using Proposition \ref{prop:pflip_rule}, compute each new pseudo-triangulation $\tau_1, \ldots, \tau_k$ arising from pseudo-flipping a pair in $S$. 
\item If any $\tau_i$ has not yet seen before, repeat steps (2) - (5) with $\tau = \tau_i$. If there are no new $\tau_i$, STOP.
\end{enumerate}
\end{algorithm}

For fixed $d'$, this process must terminate eventually because $\mathcal{D}_{d'}(P)$ is finite. However, to detect a facet map between $P$ and $Q$, we would have to execute Algorithm \ref{alg:facet_map} for each $d'$ divisible by $d$. 

\begin{observation}
\label{obs:facet_map}
\normalfont
Suppose we know $\mathcal{D}_{d'}(P) = \mathcal{D}_{d'}(Q)$. Using the methods of Lemma \ref{lem:dds_map}, we can also construct an algorithm for producing a facet map between $P$ and $Q$. First, select initial triangulations $\mathcal{T}_0(P)$ and $\mathcal{T}_0'(Q)$. Find a path of pseudo-flips from $\tau_0(P)$ to $\tau_0(Q)$ using some modified version of Algorithm \ref{alg:facet_map} where we also save path information. Then the inductive procedure outlined in the proof of Lemma \ref{lem:dds_map} can be used to systematically update the triangulation on $P$ and intermediary facet maps $\mathcal{F}_i$ at each step along the path of pseudo-triangulations. For the sake of brevity, we neglect presenting complete details. 
\end{observation}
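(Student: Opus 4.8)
The plan is to turn the sketch above into a fully specified procedure by coupling a path-search on the pseudo-flip graph with the inductive facet-map construction of Lemma~\ref{lem:dds_map}. Since we are given $\mathcal{D}_{d'}(P) = \mathcal{D}_{d'}(Q)$, the whole construction reduces to (i) producing an explicit sequence of pseudo-flips from $\tau_0(P)$ to $\tau_0(Q)$ and (ii) realizing each such pseudo-flip geometrically while keeping track of the triangulation it induces on $P$.

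First I would fix initial $d'$-minimal triangulations $\mathcal{T}_0(P)$ and $\mathcal{T}_0'(Q)$ and compute $\tau_0(P)$ and $\tau_0(Q)$ using Definition~\ref{def:weight_triangle}. To recover a path between them, I would run a breadth-first search on the finite graph whose vertices are the pseudo-triangulations in $\mathcal{D}_{d'}(P)$ and whose edges are pseudo-flips, detecting admissible pseudo-flips via Corollary~\ref{cor:pflip_cor} and computing the resulting weights via Proposition~\ref{prop:pflip_rule}. This is the ``modified version of Algorithm~\ref{alg:facet_map}'' alluded to above: each time a new pseudo-triangulation is enqueued, I store the pseudo-flip and predecessor used to reach it, so that once $\tau_0(Q)$ is reached, backtracking yields an explicit sequence $SF_{\omega_{1,1}\omega_{2,1}}, \dots, SF_{\omega_{1,n}\omega_{2,n}}$ carrying $\tau_0(P)$ to $\tau_0(Q)$.

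Second, I would walk this path exactly as in the inductive step of Lemma~\ref{lem:dds_map}, maintaining a current triangulation $\mathcal{T}^{(i)}$ of $P$ together with a facet map $\mathcal{F}_i^{\Delta}$ onto a realization of the $i$-th pseudo-triangulation. To apply the next pseudo-flip I would select the two triangles being exchanged, $G$-map them onto a concrete minimal parallelogram, swap its diagonal (the new weights are unambiguous by Proposition~\ref{prop:well_defined}), and pull the two new bisecting segments back, obtaining a local facet map $\mathcal{G}_i^{\Delta}$. Setting $\mathcal{F}_{i+1}^{\Delta} := \mathcal{G}_i^{\Delta} \circ \mathcal{F}_i^{\Delta}$ and adjoining to $\mathcal{T}^{(i)}$ the $\mathcal{F}_i^{\Delta}$-preimages of the two new segments produces the refined triangulation $\mathcal{T}^{(i+1)}$. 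After all $n$ steps, $\mathcal{F}_n^{\Delta}$ is the desired facet map from $P$ to $Q$; termination is immediate because $\mathcal{D}_{d'}(P)$ is finite.

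The main obstacle is the bookkeeping in the second step: one must verify that at every stage $\mathcal{F}_{i+1}^{\Delta}$ remains a \emph{bona fide} facet map rather than merely a piecewise map. The delicate point, already flagged in Lemma~\ref{lem:dds_map}, is that the two segments introduced by a local flip live in the target parallelogram and must be transported back through the entire accumulated composition $\mathcal{F}_i^{\Delta}$ before refining $\mathcal{T}^{(i)}$; only with this refinement does the composite send facets to facets by $G$-maps. Making this transport precise --- and checking that the refinements do not interfere across successive steps --- is the one part that requires care, whereas the path-search and the local weight computations are routine given Corollary~\ref{cor:pflip_cor} and Propositions~\ref{prop:pflip_rule} and~\ref{prop:well_defined}.
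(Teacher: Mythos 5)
Your proposal is correct and follows essentially the same route as the paper: the breadth-first search with stored predecessors is precisely the ``modified version of Algorithm \ref{alg:facet_map} where we also save path information,'' and walking the resulting pseudo-flip path while composing local facet maps and pulling the two new bisecting segments back through the accumulated map to refine the triangulation is exactly the inductive procedure of Lemma \ref{lem:dds_map} that the observation invokes. You have in fact supplied more detail than the paper does (which explicitly omits it ``for the sake of brevity''), including correctly flagging the one delicate bookkeeping point about transporting the new segments back before refining.
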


Fix a positive integer $d$. In light of Proposition \ref{prop:well_defined}, it makes sense to construct the following graph, $\mathbb{G}_{d}$, that provides an explicit visualization of the dynamics arising from $\mathcal{D}_{d'}$. 

\begin{definition}[$\mathbb{G}_d$]

Given a positive integer $d$, the edge-labeled graph $\mathbb{G}_d$ is defined as follows.

\begin{enumerate}
\item \emph{[vertices]} $d$-weight classes.\footnote{Or equivalently, $G$-equivalence classes of $d$-minimal triangles by Theorem \ref{thm:weight_classification}.}
\item \emph{[edges]} weight classes $\omega_1$ and $\omega_2$ are connected by an edge if they are pseudo-flippable. 
\item \emph{[edge-labels]} The edge $E$ between $\omega_1$ and $\omega_2$ is labeled by the pair $\{\omega_1', \omega_2'\}$, the result of pseudo-flipping $\omega_1$ and $\omega_2$. 
\end{enumerate}

\end{definition} 

It is possible to write down explicitly the form of $\mathbb{G}_d$ for general $d$, although we will not present this here. Consider the weight class labeling of $6$-minimal triangles given below. By Section 2 of \cite{turnerwu}, this is a complete set of $G$-equivalence classes in denominator $6$.

\begin{figure}[H]
    \centering
    \includegraphics[keepaspectratio=true, width=10 cm]{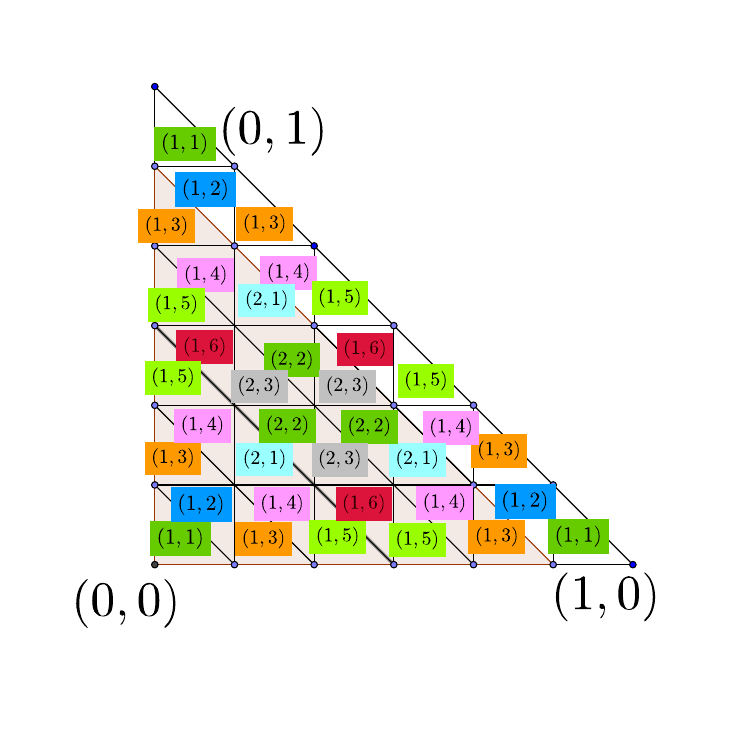}
    \label{fig:triang}
\end{figure}

\vspace{-1 cm}

Then $\mathbb{G}_6$ has the following form.

\begin{figure}[H]
    \centering
    \includegraphics[keepaspectratio=true, width=15 cm]{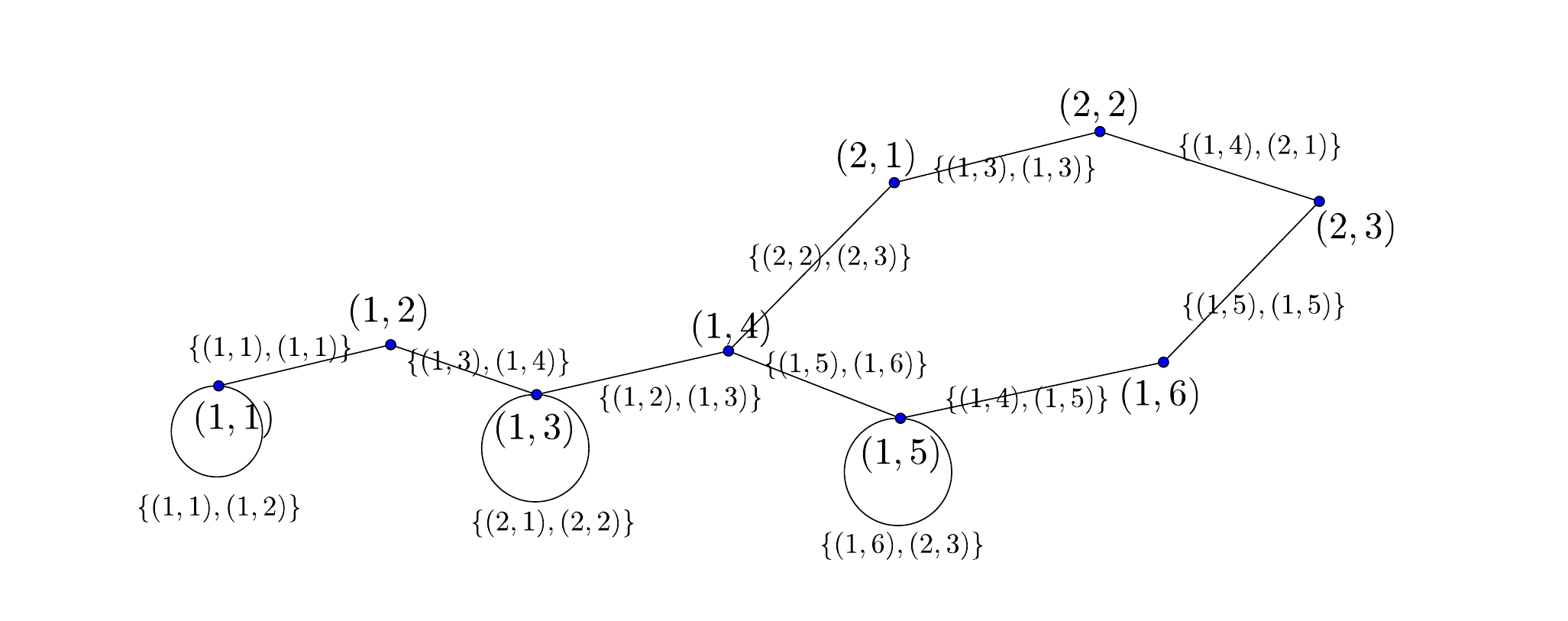}
    \caption{The graph $\mathbb{G}_d$ when $d = 6$. }
    \label{fig:Gd}
\end{figure} 

Observe that $\mathbb{G}_6$ is a subgraph of the dual to the triangular lattice, except for the loops on the lower row. That is, except for loops, it is a subgraph of the hexagonal lattice. This is true for general $d$ as well. 

Finally, observe that $\mathcal{D}_{d}(P)$ can be viewed as the orbit of a multiset of vertices of $\mathbb{G}_d$ under the pseudo-flip dynamics described by the edge-labelings. This alternative perspective on the discrete dynamical system $\mathcal{D}_{d'}$ provides explicit visualizations that may be fruitful for future inquiry.
\end{subsection}

\begin{subsection}{Detecting Edge Compatibility and Mapping}
\label{subsec:edge_compat}

In this section, we provide explicit methods for showing that two minimal edges are $G$-equivalent. In other words, given $d$-minimal edges $E$ and $F$, we provide a computational check to determine if $\mathbf{W}_d(E) = \mathbf{W}_d(F)$. By extension, given denominator $d$ polygons $P$ and $Q$, this can be used to check if $\mathbf{W}_d(P) = \mathbf{W}_d(Q)$. 

An oriented segment $E$ is said to be $d$-primitive if it is $d$-minimal and has the property that $W_d(E)$ is a unit in $\mathbb{Z}/d \mathbb{Z}$. 

\begin{lemma}
\label{lem:prim_segment}
If $E$ and $F$ are oriented $d$-primitive segments satisfying $W_{d}(E) = \pm W_{d}(F)$, then $E$ and $F$ are $G$-equivalent. Moreover, if $p$ is an endpoint of $E$ and $p'$ and endpoint of $F$, a $G$-equivalence may be chosen that sends $p$ to $p'$. 
\end{lemma}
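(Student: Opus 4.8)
The plan is to put every oriented $d$-primitive segment into a canonical horizontal position by $G$-maps, and to read off from the weight exactly which canonical segment it reaches. First I would exploit $d$-minimality. Writing $p = (w/d, x/d)$ and $q = (y/d, z/d)$, the condition that $E \cap \mathcal{L}_d$ consist only of $\{p,q\}$ is equivalent to $\gcd(y-w, z-x) = 1$ (the number of $\mathcal{L}_d$-points on the segment being $\gcd(y-w,z-x)+1$); that is, the integer vector $v = (y-w, z-x)$ obtained by scaling the displacement $q-p$ by $d$ is primitive. Since $SL_2(\mathbb{Z})$ acts transitively on primitive integer vectors, I can choose $U \in SL_2(\mathbb{Z})$ with $Uv = (1,0)^T$. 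Applying the orientation-preserving $G$-map $U$, which by Proposition \ref{prop:weight_inv} preserves the weight and (since the displacement now points in the $+x$ direction) sends the start $p$ to the left-hand endpoint, the segment $E_{p\to q}$ becomes a horizontal segment from $(a/d, b/d)$ to $((a+1)/d, b/d)$.

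A direct computation gives the weight of this horizontal segment as $\det\begin{pmatrix} a & a+1 \\ b & b\end{pmatrix} = -b \bmod d$, so $b \equiv -c \bmod d$ where $c := W_d(E_{p\to q})$; because $E$ is $d$-primitive, $c$ is a unit and hence $\gcd(b,d) = 1$. Now I would clean up the remaining coordinates using only weight- and orientation-preserving maps: an integer translation in the $y$-direction reduces $b$ to its residue $\beta := (-c) \bmod d \in \{0, \ldots, d-1\}$, while the horizontal shears $\begin{pmatrix} 1 & k \\ 0 & 1\end{pmatrix}$ change $a \mapsto a + kb$ and integer $x$-translations change $a \mapsto a + md$; since $\gcd(b,d)=1$ the quantity $kb + md$ ranges over all of $\mathbb{Z}$, so I can force $a = 0$, and none of these moves disturbs the displacement $(1/d,0)$ or the $y$-coordinate. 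Thus $(E_{p\to q},p)$ is carried by an orientation-preserving $G$-map to the canonical pair $(H_c, \ell_c)$, where $H_c$ is the segment from $(0,\beta/d)$ to $(1/d,\beta/d)$ and $\ell_c = (0,\beta/d)$ is its left endpoint, with $\beta$ depending only on $c \bmod d$. Applying this to both $E$ and $F$: if $W_d(E) = W_d(F)$ the two canonical segments coincide and composing the reductions gives the equivalence; if $W_d(E) = -W_d(F)$, I precompose one reduction with the horizontal reflection $R = \begin{pmatrix} 1 & 0 \\ 0 & -1\end{pmatrix}$, an orientation-reversing $G$-map that negates the weight and carries $H_{-c}$ exactly onto $H_c$, left endpoint to left endpoint.

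For the moreover part the key observation is that the reduction above is tied to a marked endpoint: orienting $E$ as $p \to q$ sends the marked $p$ to $\ell_c$, and similarly for $F$ with $p'$. Reorienting by the marked endpoint changes the recorded weight only by a sign, so $W_d(E_{p\to q}) = \pm W_d(E) = \pm W_d(F) = \pm W_d(F_{p'\to q'})$, and the single hypothesis $W_d(E) = \pm W_d(F)$ therefore forces the two marked normal forms to agree up to the sign of the weight. When the signs already match, the composite orientation-preserving map sends $p \mapsto p'$; when they differ, the reflection $R$ flips the sign while preserving the left-endpoint-to-left-endpoint correspondence, so the composite still sends $p \mapsto p'$. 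Since any $G$-map carrying $E$ onto $F$ automatically matches the remaining pair of endpoints, this settles all four possible endpoint pairings simultaneously.

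I expect the main obstacle to be the bookkeeping of the interaction between orientation and the marked endpoint: one must check that allowing the orientation-reversing reflection $R$ absorbs the $\pm$ in the weight hypothesis \emph{and} maintains the left-to-left endpoint matching, so that no separate argument is needed for the cases that swap which endpoint goes where. Reducing everything to the single marked normal form $(H_c,\ell_c)$ is precisely what makes this transparent.
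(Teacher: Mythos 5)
Your proof is correct, but it takes a genuinely different route from the paper's. The paper works purely algebraically: writing the (scaled) endpoint coordinates of $E$ and $F$ as $2\times 2$ matrices, it uses the hypothesis that $W_d(E)$ is a unit to invert the coordinate matrix of $E$ modulo $d$, solves for the transition matrix $M$ as a residue class mod $d$ with $\det M \equiv \pm 1$, and then lifts that residue class to an honest element of $GL_2(\mathbb{Z})$ via the gcd lemma (Lemma \ref{lem:nt}) and the Euclidean algorithm; the endpoint-matching in the ``moreover'' clause is built in because $M$ is constructed to carry the column for $p$ to the column for $p'$. You instead build an explicit normal form: $d$-minimality makes the scaled displacement a primitive integer vector, transitivity of $SL_2(\mathbb{Z})$ on primitive vectors flattens the segment to a horizontal one of length $1/d$, the unit condition on the weight gives $\gcd(b,d)=1$ so that shears and integer translations can kill the $x$-coordinate of the left endpoint, and the reflection absorbs the sign ambiguity. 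Each approach has its merits: the paper's congruence-lifting argument is the reusable engine for its later computations (it is invoked again in Algorithm \ref{alg:edge_detect}), whereas your construction essentially produces the canonical representative $C(E)$ of Proposition \ref{prop:canon_form} directly, so the lemma and the canonical form come out of a single geometric reduction; your route also makes the endpoint bookkeeping in the ``moreover'' part transparent, since the marked endpoint is always sent to the left endpoint of the normal form. Your uses of the standard facts (the lattice-point count $\gcd(y-w,z-x)+1$, transitivity of $SL_2(\mathbb{Z})$ on primitive vectors, and the sign change of $W$ under reorientation) are all sound, so I see no gap.
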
 

\begin{proof}
Suppose $E$ is oriented from endpoints $p = (w/d, x/d)$ to $q = (y/d, z/d)$, and $F$ is oriented from endpoints $p' = (w'/d, x'/d)$ to $q' = (y'/d, z'/d)$. Then observe that the existence of a $G$-map follows from the existence of a matrix $M = \begin{pmatrix} m_{11} & m_{12} \\ m_{21} & m_{22} \end{pmatrix} \in GL_2(\mathbb{Z})$ satisfying

\begin{equation*}
\begin{split}
ME = F \Leftrightarrow \\
\begin{pmatrix} m_{11} & m_{12} \\ m_{21} & m_{22} \end{pmatrix} \begin{pmatrix} w & y \\ x & z \end{pmatrix} = \begin{pmatrix} w' & y' \\ x' & z' \end{pmatrix} \mod d. 
\end{split}
\end{equation*}

Since $W(E) = \det \begin{pmatrix} w & y \\ x & z \end{pmatrix}$ is a unit in $\mathbb{Z}/d\mathbb{Z}$, $\begin{pmatrix} w & y \\ x & z \end{pmatrix}$ is invertible. Therefore,

\begin{equation*}
\begin{pmatrix} m_{11} & m_{12} \\ m_{21} & m_{22} \end{pmatrix} =  \begin{pmatrix} w' & y' \\ x' & z' \end{pmatrix} \begin{pmatrix} w & y \\ x & z \end{pmatrix}^{-1} \mod d. 
\end{equation*}

Observe by the fact that determinant is a multiplicative homomorphism that 

\begin{equation*}
\begin{split}
\det M = \left(\det \begin{pmatrix} w' & y' \\ x' & z' \end{pmatrix}\right) \left( \det \begin{pmatrix} w & y \\ x & z \end{pmatrix}\right)^{-1} \mod d \Leftrightarrow \\
\det M = W_{d}(F) W_{d}(E)^{-1} = \pm 1 \mod d. 
\end{split}
\end{equation*}

This implies that $\det M = \det \begin{pmatrix} m_{11} & m_{12} \\ m_{21} & m_{22} \end{pmatrix}  = \pm 1 + kd$ for some integer $k$. We need to construct a new matrix $M'$ such that 
\begin{enumerate}
\item $M = M' \mod d$, and 
\item $\det M' = \pm 1$. 
\end{enumerate}

to guarantee that the residue class of $M$ can be realized as a matrix $M'$ in $GL_2(\mathbb{Z})$.

\emph{Step 1.}

First we replace $m_{21}$ with a new entry $m_{21}' = m_{21} + n_{21} d$ such that $\mathrm{gcd}(m_{21} + n_{21} d, m_{22}) = 1$. Note that $\det M = \pm 1 \mod d$ implies $\mathrm{gcd}(m_{21}, m_{22})$ is relatively prime to $d$. Apply Lemma \ref{lem:nt} with $a = m_{21}, b = m_{22}$ and $d = d$ to produce $q = n_{21}$ such that $m_{21}' = m_{21} + n_{21} d$ is relatively prime to $m_{22}$. This concludes Step 1. 

\emph{Step 2:}

Next, we want to construct $n_{11}$ and $n_{12}$ such that the matrix

\begin{equation*}
M' := \begin{pmatrix} m_{11} + n_{11} d & m_{12} + n_{12} d \\ m_{21}' & m_{22} \end{pmatrix}.
\end{equation*}

has determinant equal to $\pm 1$. Observe that $\det M \equiv \det M' \equiv \pm 1 \mod d$, which implies $\det M = m_{11} m_{22} - m_{12} m_{21}' = \pm 1 + kd$ for some integer $k$.

We compute that 

\begin{equation*}
\det M' = m_{11} m_{22} - m_{12} m_{21}' + n_{11} d m_{22} - n_{12} d m_{21}'.
\end{equation*}

We wish to solve for $n_{12}$ and $n_{11}$ such that

\begin{equation}
\begin{split}
\label{eqn:euclid}
m_{11} m_{22} - m_{12} m_{21}' + n_{11} d m_{22} - n_{12} d m_{21}' = \pm 1 \Leftrightarrow \\
d( n_{11} m_{22} - n_{12} m_{21}') = \pm 1 - (m_{11} m_{22} - m_{12} m_{21}) \Leftrightarrow \\
n_{11} m_{22} - n_{12} m_{21}' = -k. 
\end{split}
\end{equation}

From Step 1, $m_{21}'$ and $m_{22}$ are coprime. Thus, using the Euclidean algorithm, we can construct $n_{11}$ and $n_{12}$ to satisfy Equation \ref{eqn:euclid}. This finishes Step 2.

Thus $M' \in GL_2(\mathbb{Z})$ and $M \equiv M' \mod d$. This completes the proof of the proposition.
\end{proof}

\begin{remark}
\label{rmk:reverse}
As a special case of Lemma \ref{lem:prim_segment}, note that a $d$-primitive segment $E$ can be mapped onto itself in a way that reverses its endpoints.
\end{remark} 
   
Suppose $E$ is a $d$-minimal (not necessarily $d$-primitive) segment. We construct explicitly a canonical form $C(E)$ for $E$. This is a unique representative $d$-minimal segment with the same Weight $\mathbf{W}_d(E)$ as $E$. 

Selct an orientation of $E$ and a residue $i$ so that $W_d(E) = i$ is positive and small as possible. Let $Pr(E)$ denote the unique primitive segment containing $E$.\footnote{We leave routine details of existence and uniqueness to the reader.} Observe that $Pr(E)$ is $k$-primitive for some $k$ dividing $d$. Let $d/k = n$. By the computation in the proof of Lemma 3.15 from \cite{turnerwu}, we observe that 

\begin{equation*}
\begin{split}
W_{k}(Pr(E)) n \equiv i \mod kn \Rightarrow
W_{k}(Pr(E))n = i + skn.
\end{split} 
\end{equation*}

for some $s \in \mathbb{Z}$. Thus $n|i$, and we have $W_k(Pr(E)) = i/n + sk \Leftrightarrow W_k(Pr(E)) \equiv i/n \mod k$. 

Hence, using Lemma \ref{lem:prim_segment}, select a $G$-map $g_E$ mapping $Pr(E)$ to the $k$-primitive segment with endpoints $p_E = (0, i/nk)$ and $q_E = (1/k, i/nk)$. Moreover, select $g_E$ in such a way that $g_E(E)$ is as close to the $y$-axis as possible (it may be necessary to apply Remark \ref{rmk:reverse} to do so). Then the ``canonical form'' $C(E)$ of the edge of Weight $\mathbf{W}_d(E)$ is defined to be $C(E):= g_E(E)$. 

\begin{proposition}
\label{prop:canon_form}
Let $E$ and $F$ be $d$-minimal edges. Then $\mathbf{W}_d(E) = \mathbf{W}_d(F)$ if and only if their canonical forms agree ie $C(E) = C(F)$. 
\end{proposition}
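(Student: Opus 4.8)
The plan is to show that the canonical form $C(E)$ is a genuine invariant of the Weight $\mathbf{W}_d(E)$, i.e., that $G$-equivalent $d$-minimal edges yield the same canonical form, and that the construction is unambiguous enough to recover the $G$-equivalence class. The backward direction is immediate: if $C(E) = C(F)$ then, since $C(E) = g_E(E)$ and $C(F) = g_F(F)$ are related to $E$ and $F$ by $G$-maps, we have $F = g_F^{-1} g_E(E)$, so $E$ and $F$ are $G$-equivalent, whence $\mathbf{W}_d(E) = \mathbf{W}_d(F)$ by Remark \ref{rmk:edge_equiv}. So the substance is entirely in the forward direction.

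For the forward direction, assume $\mathbf{W}_d(E) = \mathbf{W}_d(F)$, equivalently (Remark \ref{rmk:edge_equiv}) that there is a $G$-map $h$ with $h(E) = F$. I would first argue that the discrete data extracted in the construction is $G$-invariant. The primitive hull $Pr(E)$ is carried by any $G$-map to the primitive hull of the image (since $G$ preserves $k$-primitivity for every $k$), so $Pr(E)$ and $Pr(F)$ are $G$-equivalent and share the same primitivity index $k$, hence the same $n = d/k$. By Proposition \ref{prop:weight_inv} the weight $W_d$ is preserved up to sign by $G$, so the ``smallest positive residue over both orientations'' $i$ is an invariant, and the computation quoted from Lemma 3.15 of \cite{turnerwu} shows $W_k(Pr(E)) \equiv i/n \bmod k$ is likewise determined. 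Thus $E$ and $F$ feed identical target data into the construction.

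The heart of the argument, and the main obstacle, is showing that the normalization of $g_E$ genuinely pins down a \emph{unique} segment, so that the two constructions land on the same $C(E) = C(F)$ despite the freedom in choosing the $G$-maps $g_E$ and $g_F$. Here I would proceed as follows. Both $g_E$ and $g_F$ send their primitive hulls to the same fixed primitive segment $P_0$ with endpoints $(0, i/nk)$ and $(1/k, i/nk)$; hence $g_F \circ g_E^{-1}$ restricts to a $G$-map stabilizing $P_0$ (as a set). I would then classify the stabilizer of $P_0$ in $G$ acting on the $k$-minimal sublattice points lying inside $P_0$: by Lemma \ref{lem:prim_segment} together with Remark \ref{rmk:reverse}, the only $G$-maps fixing $P_0$ setwise are the identity and the endpoint-reversing involution, modulo translations that fix $P_0$. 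The normalization ``$g_E(E)$ as close to the $y$-axis as possible'' is precisely engineered to kill the remaining reversal ambiguity, selecting one of the two orientations canonically. I would verify that the positions of the interior $\mathcal{L}_d$-points of $E$ along $P_0$ are determined by $i/n \bmod k$ and the minimality of $E$, so after the reversal is fixed there is no remaining freedom, giving $g_E(E) = g_F(F)$.

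The delicate point I expect to need the most care is confirming that the ``closest to the $y$-axis'' tie-breaking rule is always well-defined and never ambiguous — i.e., that the two orientations of $g_E(E)$ are never symmetric about the $y$-axis in a way that leaves two equally valid choices. This should follow from $d > 1$ arguments paralleling the case analysis in Proposition \ref{prop:well_defined}, using that a $d$-minimal segment meets $\mathcal{L}_d$ only at its endpoints so its interior lattice offset determines a strict position; I would isolate this as the one genuine lemma and handle the degenerate $d=1$ (and the primitive $k=d$) cases separately, where $C(E)$ is trivially the unique representative.
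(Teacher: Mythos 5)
Your proposal is correct and takes essentially the same route as the paper, whose own proof is just the remark that the claim is contained in the preceding construction (i.e., that $C(E)$ is the unique representative of the class $\mathbf{W}_d(E)$ lying in the first quadrant closest to the origin); you are simply supplying the well-definedness details the paper leaves to the reader, via the $G$-invariance of $(k,n,i)$, the classification of the stabilizer of $P_0$ as identity plus endpoint reversal, and the tie-breaking rule. One small repair: the map that stabilizes $P_0$ should be $g_F \circ h \circ g_E^{-1}$ for a $G$-map $h$ with $h(E)=F$ (not $g_F \circ g_E^{-1}$, which need not carry $P_0$ to itself), and the tie-break is in fact never ambiguous since the two candidate positions $j$ and $n-1-j$ of the image subsegment inside $P_0$ either coincide or one is strictly closer to the $y$-axis.
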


\begin{proof}
This fact is essentially contained in the preceding discussion. The careful reader will note that $C(E)$ is the representative of the $G$-equivalence class $\mathbf{W}_d(E)$ that lies in the first quadrant and is the least distance away from the origin. 
\end{proof}

We summarize the steps of constructing $C(E)$ with an algorithm.

\begin{algorithm}[Detecting when $\mathbf{W}_d(E) = \mathbf{W}_d(F)$]
\label{alg:edge_detect}
\normalfont
Let $E$ be a $d$-minimal segments. The following algorithm constructs the canonical form $C(E)$. 

\begin{enumerate}
\item Select an orientation of $E$ and a positive number $i$ as small as possible such that $W_d(E) = i$. 

\item Construct the unique primitive edge $Pr(E)$ containing $E$. This can be done by methodically evaluating the weight of all $\ell$-minimal segments containing $E$ for $\ell$ dividing $d$. 

\item Suppose $Pr(E)$ is $k$-minimal. Let $n = d/k$. Then using the procedure in the proof of Lemma \ref{lem:prim_segment}, construct a $G$-map $g_E'$ sending $Pr(E)$ to the $k$-primitive minimal segment $F$ from oriented endpoints $p_E = (0, i/nk)$ to $q_E = (1/k, i/nk)$.

\item Let $g_E''$ be the map sending $Pr(E)$ to $F$ with the opposite orientation. The map $g_E''$ can be constructed from $g_E$ using Remark \ref{rmk:reverse}. 

\item Define $C(E)$ to be either $g_E'(E)$ or $g_E''(E)$, whichever is closest to the $y$-axis. 

\end{enumerate}

If $F$ is another $d$-minimal segment, then repeat the above process to construct $C(F)$. Then $C(E) = C(F)$ if and only if $\mathbf{W}_d(E) = \mathbf{W}_d(F)$.

\end{algorithm}

\begin{remark}
\label{rmk:edge_map}
\normalfont
Note that the above procedure also provides maps from $E$ to $F$ for free in the case when $\mathbf{W}_d(E) = \mathbf{W}_d(F)$. Hence, given denominator $d$ polygons $P$ and $Q$ with $d'$-minimal triangulations $\mathcal{T}_1$ and $\mathcal{T}_2$, respectively,  satisfying $\mathbf{W}_d(P) = \mathbf{W}_d(Q)$, we can methodically construct a piecewise $G$-bijection\footnote{Existence of such a map is a consequence of Lemma \ref{lem:Weight_eqn}.} from $\mathcal{T}_1^{-}$ to $\mathcal{T}_2^{-}$.   
\end{remark}

\end{subsection}

\begin{subsection}{Detecting Vertex Compatibility and Mapping}
\label{subsec:vert_compat}

Detecting vertex compatibility of $P$ and $Q$ requires checking that $\mathrm{ehr}_P = \mathrm{ehr}_Q$. Algorithms for doing this were developed by Barvinok \cite{barvinok} have been implemented in the software \texttt{LattE} \cite{LattE}.

Thus, our primary concern is, assuming $\mathrm{ehr}_P = \mathrm{ehr}_Q$, constructing a mapping of the vertices of the triangulation $\mathcal{T}_1$ of $P$ to the vertices of the triangulation $\mathcal{T}_2$ of $Q$. Here is a potential algorithm for doing so.

\begin{algorithm}[Mapping vertices]
\label{alg:vertex_map}
\normalfont
Suppose $P$ and $Q$ are Ehrhart equivalent denominator $d$ polygons with $d'$-minimal triangulations $\mathcal{T}_1$ and $\mathcal{T}_2$, respectively. The following procedure will construct a piecewise $G$-bijection between the vertices of $\mathcal{T}_1$ and $\mathcal{T}_2$. Recall that such a map exists by Lemma \ref{lem:points2}. 

For $n \geq 1$ dividing $d'$:

\begin{enumerate}
\item List the set $S_P$ of $n$-primitive points in $P$ and the set $S_Q$ of $n$-primitive points in $Q$. By Lemma \ref{lem:points1}, each point of $S_P$ is $G$-equivalent to each point of $S_Q$.  

\item  For each $p \in S_P$ and $q \in S_Q$, use the construction from the proof of Lemma \ref{lem:points1} to translate $p$ and $q$ to points $p'$ and $q'$, respectively, visible from the origin.  

\item One can use the Euclidean algorithm to map any two integral points visible from the origin via $GL_2(\mathbb{Z})$ (we leave details to the reader).\footnote{Hint: Try to map the point $(1,0)$ to a general integer point.} Thus one can construct a $G$-map between $p'$ and $q'$ with this procedure.  
\end{enumerate}
\end{algorithm}

\end{subsection}

Finally, using our main theorem, Theorem \ref{thm:dehn}; along with a combination of Algorithms \ref{alg:facet_map}, \ref{alg:edge_detect}, and \ref{alg:vertex_map}; Remark \ref{rmk:edge_map}; and Observation \ref{obs:facet_map} we can detect and/or construct rational equidecomposability relations between denominator $d$ polygons $P$ and $Q$. This provides our response to Question \ref{que:alg} posed by Haase--McAllister in \cite{haase}. 
\end{section}

\begin{section}{Further Questions}
\label{sec:questions}

We briefly summarize some questions and directions for further inquiry, restricting to the case of polygons in accordance with the style of this paper.  

\begin{enumerate}

\item Are any of the criteria in Theorem \ref{thm:dehn} redundant? Moreover, when checking facet compatibility between denominator $d$ polygons $P$ and $Q$, can the requirement of having to check $\mathcal{D}_{d'}(P) = \mathcal{D}_{d'}(Q)$ for all $d'$ divisible by $d$ be reduced to a finite check? We conjecture the answer is ``yes,'' and that it suffices to only check $\mathcal{D}_{d}(P)$ and $\mathcal{D}_{d}(Q)$ to determine facet compatibility. This would provide a more satisfying answer to Question \ref{que:dehn} posed by Haase--McAllister \cite{haase}. 

\smallskip

\item Is it possible, using the algorithmic framework of Section \ref{sec:algorithm}, to develop computationally \emph{efficient} procedures for detecting/constructing equidecomposability relations? Even detecting facet map equivalence is an interesting problem (see Section \ref{subsec:facet_compat}). This would provide a more complete answer to Question \ref{que:alg} orginally posed by Haase--McAllister \cite{haase}. 

\end{enumerate}

\end{section}

\begin{section}{Acknowledgments}
This research was conducted during Summer@ICERM 2014 at Brown University and was generously supported by a grant from the NSF. First and foremost, the authors express their deepest gratitude to Sinai Robins for introducing us to the problem, meeting with us throughout the summer to discuss the material in great detail, suggesting various useful approaches, answering our many questions, and critically evaluating our findings. In the same vein, we warmly thank our research group's TAs from the summer, Tarik Aougab and Sanya Pushkar, for numerous beneficial conversations, suggestions, and verifying our proofs. In addition, we thank the other TA's, Quang Nhat Le and Emmanuel Tsukerman, for helpful discussions and ideas. We also thank Tyrrell McAllister for visiting ICERM, providing an inspiring week-long lecture series on Ehrhart theory, and discussing with us our problems and several useful papers in the area. We are grateful to Jim Propp for a very long, productive afternoon spent discussing numerous approaches and potential invariants for discrete equidecomposability. We also thank Hugh Thomas, who is a professor at the second author's home university, for interesting discussions as well as carefully reviewing and commenting on drafts of these results. We extend our gratitude to Michael Mossinghoff and again to Sinai Robins for coordinating the REU. Finally, we thank the ICERM directors, faculty, and staff for providing an unparalleled research atmosphere with a lovely view.
\end{section}

\bibliographystyle{alphaurl}
\bibliography{disc_equi_bib}

\end{document}